\def\mybig#1{\text{\Large$#1$}}
 \def\L{\mathbf L}   
\def\be{\begin{equation}}  \def\ee{\end{equation}}
\def\bal{\begin{aligned}}   \def\eal{\end{aligned}}
\def\bes{\begin{equation*}}  \def\ees{\end{equation*}}
\def\narrower{%
  \advance\leftskip\parindent
  \advance\rightskip\parindent}
\def\be{\begin{equation}}   \def\ee{\end{equation}}     
\def\bes{\begin{equation*}}    \def\ees{\end{equation*}}
\def\ba{\be\begin{aligned}} \def\ea{\end{aligned}\ee}   
\def\bas{\bes\begin{aligned}}  \def\eas{\end{aligned}\ees}
\def\={\;=\;}  \def\:{\;:=\;}  \def\+{\,+\,}  \def\m{\,-\,}  \def\i{^{-1}}
  \def\ssm{\smallsetminus}    \def\lr#1{\langle#1\rangle}   
\def\wt{\widetilde} \def\wh{\widehat} \def\p{\partial}	
\def\thin{{\hskip 1pt}} 
 \def\Z{\mathbb Z} \def\R{\mathbb R}  \def\C{\mathbb C}    
       \def\H{\mathbb H}
\def\G{\Gamma} \def\t{\tau}   \def\g{\gamma} \def\de{\delta} \def\om{\omega}
 \def\Om{\Omega} \def\D{\Delta} \def\th{\theta}  \def\vth{\vartheta}
   \def\g{\gamma} \def\G{\Gamma}  \def\s{\sigma}
\def\k{k} \def\l{\lambda}  \def\t{\tau}   
\def\tM{\wt M} \def\hM{\wh M}     
     \def\Dh{\wh D}   \def\Eh{\wh E_2}  
\def\K{\mathcal K} \def\K{\mathfrak K} \def\K{\mathbb K}  \def\K{\Theta} 
\def\pih{\pi^{\text{\rm hol}}}  \def\pihh{\wh\pi}  \def\pip{\pi}  
\def\sl#1{\bigl|_{#1}}
\def\DD#1{D\up{#1}}
\def\MLDO{{\rm MLDO}}  \def\MLDO{{\textsf{MLDO}}}   \def\MLDO{{\sf{MLDO}}}
     \def\ra {\rightarrow} 
\newcommand{\Ker}{\operatorname{Ker}}
\def\SD{\mathfrak d}
\def\d{\SD}
\def\sd{\mathfrak{d}}
\def\MG{SL_2(\Z)}
\def\SL{SL_2(\R)}
\theoremstyle{plain}
   \newtheorem{theorem}{Theorem} 
   \newtheorem{lemma}{Lemma}
   \newtheorem{proposition}{Proposition}
\newtheorem*{theorem*}{Theorem}
 \newtheorem*{corollary*}{Corollary}
 \newtheorem*{lemma*}{Lemma}
\theoremstyle{remark}
   \newtheorem{remark}{Remark}
\def\lie{\mathfrak{sl}_2}
\def\Hh{\mathfrak H}
\def\H{\mathfrak H}
\def\Hol{\mathsf{Hol}}  \def\AHol{\mathsf{AHol}}
\def\qM{\wt M}      
\def\E{\bold E} \def\E{\mathcal{E}_2} \def\E{\Phi}  \def\E{\mathcal{E}}
\def\bt{\bar\t}
\def\dM{M^D_*(\G)}  \def\dMp{M^D_{>0}(\G)}
\def\up#1{^{\langle#1\rangle}}
\newcommand{\sm}[4]{\bigl(\smallmatrix #1&#2\\ #3&#4\endsmallmatrix\bigr)}
\newcommand{\bm}[4]{\Bigl(\begin{matrix} {\,#1}&{#2\,}\\ {\,#3}&{#4\,}\end{matrix}\Bigr)}
\newcommand{\bprf}{\noindent\textit{Proof.}\;}
\newcommand{\eprf}{$\qquad\square$\medskip}
\newcommand{\bl}{\begin{lemma}}
\newcommand{\el}{\end{lemma}}
\newcommand{\bp}{\begin{proposition}}
\newcommand{\ep}{\end{proposition}}
\newcommand{\bpr}{\begin{proof}}
\newcommand{\epr}{\end{proof}}
\newcommand{\br}{\begin{remark}}
\newcommand{\ebr}{\end{remark}}
\begin{document}

\bibliographystyle{abcd}

\title[Modular Linear Differential Operators]
  {Modular Linear Differential Operators \\ and Generalized Rankin-Cohen Brackets}
\author{Kiyokazu Nagatomo}  
\address{Osaka University, Osaka 565-0871, Japan}
\email{nagatomo@math.sci.osaka-u.ac.jp}
\author{Yuichi Sakai}  
\address{Kurume Institute of Technology, Fukuoka 830-0052, Japan}
\email{dynamixaxs@gmail.com}
\author{Don Zagier}  
\address{Max Planck Institute for Mathematics,  53111 Bonn, Germany\newline \hphantom{Xx} 
 International Centre for Theoretical Physics, Trieste, Italy }
\email{dbz@mpim-bonn.mpg.de}
\maketitle

\setlength{\parskip}{3pt}
\setlength{\parindent}{15pt}

\bigskip
\noindent

\bigskip
\noindent

{\narrower {\small \bf Abstract. \rm  
The aim in this paper is to give expressions for modular linear differential operators of any order. 
In particular, we show that they can all be described in 
terms of Rankin-Cohen brackets and a modified Rankin-Cohen bracket found by Kaneko and Koike.
We also  give more uniform descriptions of MLDOs in terms of canonically defined higher Serre derivatives 
and an extension of Rankin-Cohen brackets, as well as in terms of quasimodular forms and 
almost holomorphic modular forms.  The last of these descriptions involves the holomorphic
projection map.  The paper also includes some general results on the theory of quasimodular forms 
on both cocompact and non-cocompact subgroups of~$\SL$, as well as a slight sharpening of 
a theorem of Martin and Royer on Rankin-Cohen brackets of quasimodular forms}. \par} 
\vskip 0.7 cm

 \setcounter{tocdepth}{1}          

{\footnotesize
\tableofcontents
}

\section{Introduction}  \label{intro}

Modular linear differential operators (MLDOs) and the corresponding
modular linear differential equations (MLDEs) have appeared in recent years
in a variety of contexts, ranging from the study of supersingular $j$-invariants to
the classification of vertex operator algebras in terms of the spaces of
modular forms spanned by the characters of their irreducible modules. In this 
paper we study and describe these operators and these differential equations in
several different ways:
\begin{itemize}
\item as ordinary differential operators $L=\sum_{r=0}^n a_r(\t)\,D^r$, where $D$ 
is the normalized differentiation operator (see below) and the $a_r(\t)$ are quasimodular 
forms that have to satisfy certain auxiliary conditions in order to make the operator $L$ modular;
\item as operators of the form $L=\sum_{r=0}^n b_r(\t)\,\d^r$, where $\d^r$ is the $r$\thin th 
iterate of the Serre derivative~$\d$ (see below) and the $b_r(\t)$ are modular forms;
\item as linear combinations of two other types of higher-order Serre derivatives;
\item in terms of Rankin-Cohen brackets and the Kaneko-Koike operator (defined below); 
\item uniformly in terms of extended Rankin-Cohen brackets; 
\item in terms of quasimodular forms and the action of $\lie$ on the space of quasimodular forms;
\item in terms of almost holomorphic modular forms and the holomorphic projection operator.
\end{itemize}
Each of these approaches leads to a complete description of all~MLDEs, but the representations 
obtained are quite different and are related to one another in non-obvious ways.
The most uniform of these are two canonical bijections between the spaces of MLDOs and of quasimodular 
forms on any lattice in~$\SL$, respecting both the weights and the natural filtrations on these 
two spaces, that are given in Section~\ref{MLDOandQMF} and in Section~\ref{MLDOandPrimProj} 
(or, in terms of almost holomorphic modular forms, in Sction~\ref{MLDOandHolProj}).

The paper falls naturally into two parts. In Section~\ref{basics}--\ref{structure} we will recall some basic
definitions, give the precise definition of MLDEs and simple properties of their solution spaces, describe a number
of concrete examples of MLDOs including the Rankin-Cohen brackets and three kinds of higher-order Serre derivatives,
and state the main structure theorems describing all MLDOs in the case of the full modular group, as well as
giving examples coming from characters of modules over vertex operator algebras. The remaining sections give 
refinements, proofs, and extensions to other lattices (with a fairly detailed discussion of the 
different forms that the theory takes for cocompact and non-cocompact lattices) and also more conceptual 
descriptions of the various isomorphisms in terms of ``extended Rankin-Cohen brackets" and of almost holomorphic
modular forms.  We also give  as an application a result saying that one of the three kinds of higher-order 
Serre derivatives can be modified slightly to act on the space of quasimodular forms with a given upper bound on their
depth (Theorem~\ref{VZforQMF}), and as a corollary of this a slight sharpening of a result of Martin and 
Royer~\cite{MR} saying that a similar modification of Rankin-Cohen brackets also acts on pairs of quasimodular 
forms without increasing their combined depths.

\section{Review of basic definitions}  \label{basics}

In this preliminary section we review some basic notions, including modular forms, quasimodular forms 
for the full modular group, and the Serre derivative.  This material will be familiar to most readers but is 
included anyway for completeness and to fix notations.

By {\it lattice} we will mean a discrete subgroup~$\G$ of finite covolume in~$\SL$, acting in the complex upper
half-plane~$\H$ by fractional transformations, i.e.,~$\g=\sm abcd$ sends $\t\in\H$ to $\g\t=\frac{a\t+b}{c\t+d}$. 
The case of most interest is when $\G$ is the full modular group~$\MG$, which we will denote 
by~$\G_1$, but in the latter sections of the paper will consider other lattices as well, including cocompact
ones whose fundamental domains are compact in~$\H$ and for which there are no cusps, no Eisenstein series,
and no Serre derivatives.  If $\G$ is an arbitrary lattice, then for $k\in\Z$ we denote by $M_k(\G)$ the 
space of holomorphic modular forms of weight~$k$ on~$\G$, meaning holomorphic functions~$f$ on~$\H$ 
satisfying $f|_k\g=f$ for all $\g\in\G$. Here $f\mapsto f|_k\g$ is the operation 
(``slash operator") defined for $\g=\sm abcd\in\SL$ by
\be\label{Slash}  \bigl(f|_k\g\bigr)(\t)\=(c\t+d)^{-k}f\Bigl(\frac{a\t+b}{c\t+b}\Bigr)
\ee
and the word ``holomorphic" includes a growth condition at cusps that is standard and will not be repeated here.  
(For~$\G=\G_1$ it is just the condition that $f(\t)$ is bounded as $\Im(\t)\to\infty$.)  The space $M_k(\G)$
is finite-dimensional over~$\C$ for each~$k$ and the direct sum $M_*(\G):=\bigoplus_kM_k(\G)$ is a finitely 
generated graded $\C$-algebra.  When $\G=\G_1$ we usually omit it from the notations, writing simply $M_*$
for $M_*(\G_1)$.  It is given explicitly by $M_*=\C[E_4,E_6]$, where $E_k$ is the standard normalized Eisenstein series 
\be\label{EisDef} E_k(\t) \= 1 \+ C_k\,\sum_{n=1}^\infty n^{k-1}\,\frac{q^n}{1-q^n}\qquad(k=2,\,4,\,6,\,\dots)\,. \ee
(Here and from now on $q=e^{2\pi i\t}$, and the $C_k$ are well-known rational numbers with $C_2=-24$, $C_4=240$, 
and $C_6=-504$.)  The form $E_2$ is not modular, but belongs to the larger 
$\C$-algebra $\tM_*=\C[E_2,E_4,E_6]$ of quasimodular forms on~$\G_1$, and for the moment we simply take this
as the definition of~$\tM_*$.  (The definition and properties of quasimodular forms for arbitrary 
lattices~$\G$ will be given in~Section~\ref{MLDOandQMF}.)  The important point is that the derivative of a 
modular form of positive weight is never modular, but is always quasimodular. In fact the ring 
$\tM_*(\G)$ (for any~$\G$) is closed under differentiation for any~$\G$, as one sees explicitly 
in the case of the full modular group from Ramanujan's famous formulas
\be \label{RamDer}
 E_2'\=\frac{E_2^2-E_4}{12}\,, \qquad E_4'\=\frac{E_2E_4-E_6}3\,,\qquad E_6'\=\frac{E_2E_6-E_4^2}2\,. \ee
(Here and from now on we denote by $f'(\t)$ or by $Df(\t)$ the renormalized derivative 
$(2\pi i)\i\,d/d\t$, where the factor $(2\pi i)\i$ is included so that the operator $D=q\,d/dq$ 
preserves the space of power series with rational coefficients in $q$.) These formulas show that the
derivative $Df=f'$ of a quasimodular form of weight~$k$ has weight~$k+2$ in this case, and the same holds for
quasimodular forms on any lattice.  Another consequence of~\eqref{RamDer}, which will play an important
role in this paper, is that the {\it Serre derivative} 
\be \label{Serrederiv}
  \d_k f(\t) \: f'(\t) \m \frac k{12}\,E_2(\t)\,f(\t) \,, \ee
maps $M_k$ to $M_{k+2}$.   Since a modular form has a well-defined weight, we will often omit the index~$k$. 
In particular, we will often write $\d^n$ (or by abuse of notation $\d_k^n$) for the iterated Serre derivative 
$\d_{k+2n-2}\cdots\d_{k+2}\d_k$, which maps modular forms of weight~$k$ to modular forms of weight~$k+2n$.

\section{Modular linear differential equations and their solution spaces}  \label{SolSpace}

We can now formulate the key definition.  A {\it modular linear differential operator} (MLDO) of {\it weight}~$K$
and {\it type}~$(k,k+K)$ on~$\G$ is a linear differential operator $L$ of finite order, with holomorphic coefficients
(also at the cusps, in the usual sense), satisfying 
\be\label{MLDEdef}  L\bigl(f|_k\g\bigr) \= L(f)|_{k+K}\g \ee
for all holomorphic functions $f$ in the upper half-plane and all~$\g\in\G$. The corresponding
{\it modular linear differential equation} (MLDE) is then the linear differential equation~$Lf=0$. 
Notice that in this definition, $k$ can be positive or negative and in fact need not even be an integer, 
but~$K$, as we will see, will always be a non-negative integer if the operator~$L$ is non-zero, and 
in fact always strictly positive except in the uninteresting case when $L$ is multiplication by a constant.

Equation~\eqref{MLDEdef} implies in particular that $L$ maps $M_k(\G)$ to~$M_{k+K}(\G)$. But
often we will apply the operator $L$ to some space of holomorphic or meromorphic modular forms of 
weight~$k$ on some smaller subgroup $\G'\subset\G$. The important remark is that the kernel of $L$ is 
a finite-dimensional  (of dimension equal to the order~$n$ of~$L$) and invariant under the action
of~$\G$ in weight~$\k$.  Conversely, any vector space~$V$ having these two properties is the solution
space of some MLDE.  To see this, we choose a basis $f_1,\,f_2,\,\dots,\,f_n$ of~$V$ and define a 
differential operatoMax Planck Institute for Mathematics, 53111 Bonn, Germany
Email address: dbz@mpim-bonn.mpg.de
r~$\D_V$ of order~$n$ (depending up to a scalar factor depends only on~$V$ and 
not on the chosen basis) by
$$ \D_V(f) \;\,=\;\, \begin{vmatrix} f&\d_\k f&\cdots&\d_\k^nf\\ f_1&\d_\k f_1&\cdots&\d_\k^nf_1\\
            \vdots&\vdots&&\vdots\\ f_n&\d_\k f_n&\cdots&\d_\k^nf_n \end{vmatrix}
\;\,=\;\, \begin{vmatrix} f&D f&\cdots&D^nf\\ f_1&D f_1&\cdots&D^nf_1\\
      \vdots&\vdots&&\vdots\\ f_n&D f_n&\cdots&D^nf_n  \end{vmatrix}\,, $$
where $\d_k^r=\d_{k+2r-2}\cdots\d_{k+2}\d_k$ is the $r$\thin th iterate of the Serre derivative~\eqref{Serrederiv} 
and where the equality of the two determinants follows from the inductively proved fact 
that each operator $\d_k^r$ is a the sum of $D^r$ and a linear combination of $D^p$ 
with $p<r$ and with quasimodular coefficients (of weight~$2r-2p$) depending only 
on~$\k$, $r$~and~$p$.  Expanding the determinants by their first rows, we obtain two expressions 
  $ \D_V=\sum_{r=0}^n a_r(\t)\,D^r$ and $\D_V=\sum_{r=0}^n b_r(\t)\,\d^r$
for the operator~$\D_V$ in which each $a_r$ is a quasimodular form of weight $n(\k+n+1)-2r$
and depth~$\le n-r$, while the $b_r$ are modular forms of the same weights.
The second expression together with the transformation property $\d_k^r(f|_k\g)=(\d_k^rf)|_{\k+2r}\g$
shows that $\D_V$ is indeed an MLDO of type $(\k,n(\k+n+1))$.

We make several remarks about this construction.  First of all, if $V$~is the 
solution space of $Lf=0$ for some MLDO~$L$, then $L$ and $\D_V$ needn't agree up to a scalar factor,
but may differ by a non-constant left factor, and indeed this often happens.  An example is given by 
the space~$V$ spanned by the Rogers-Ramanujan functions defined by~\eqref{RReqs}. As we will discuss in the 
next section, $V$ is the solution space of the operator~$L_{2,1/5}$ defined in~\eqref{KZDO}, but when we calculate 
$\D_V$ with respect to the basis $(G_0,G_1)$ we find that $\D_V$ is equal to $\frac15\eta(\t)^4$ times $L_{2,1/5}$,
where $\eta(\t)=\D(\t)^{1/24}=q^{1/24}\prod(1-q^n)$ as usual.  One way to normalize the MLDO having
a given space~$V$ as its solution space is to fix the top coefficient (e.g.~taking it to be~1,
which is the case of {\it monic} MLDOs discussed below, although making this normalization for a general
MLDO with holomorphic coefficients may lead to an MLDO with only meromorphic coefficients). Another is to
normalize~$L$ so that the quasimodular or modular forms occurring as coefficients in its $D$- or $\d$-expansion 
are holomorphic and have no common factor of positive weight.  In ideal cases, like for the 
operator~\eqref{KZDO} below, these agree.  But in general one should be aware that giving a modular 
linear differential equation is the same as giving its solution space, but is not quite the same as giving 
a modular linear differential operator, since the opeators $L$ and $hL$ for any function $h(\t)$ give the same 
differential equation $Lf=0$, and for some purposes it is important to keep track of this distinction.

\section{Examples of modular linear differential operators of small orders} \label{examples}


The simplest non-trivial example of an MLDO is the Serre derivative~$\d_k$ as defined in~\eqref{Serrederiv}.
It has order~1, weight~2, and type~$(k,k+2)$ and up to a constant factor is the only MLDO with these parameters.
Notice that $k$ here can be positive or negative, and that for~$k=0$ the Serre derivative reduces to just~$D$,
which is indeed an MLDO of type~$(0,2)$.

The next example, which has order~2 and weight~4, arose in~\cite{KZ} in connection with the study of supersingular 
$j$-invariants in characteristic $p>3$.  (The definition of supersingular plays no role here and will not be recalled.)
If $k$ is a positive even integer, then the composition $\d_k^2$ of $\d_k$ with $\d_{k+2}$ goes from $M_k$ to $M_{k+4}$.
If $k+4$ is not divisible by~3 (which is always true in the application to supersingular $j$-values, where $k=p-1$
with $p>3$ prime), then $M_{k+4}=M_k\cdot E_4$ and we obtain an endomorphism $E_4\i\d_k^2=E_4\i\d_{k+2}\d_k$ of the 
finite-dimensional space $M_k$. Its eigenvalues turn out to be $\l_{k-12n}$ for $0\le n\le k/12$, where 
$\l_k=\frac{k(k+2)}{144}$.  The eigenfunction $F_k(\t)$ with eigenvalue~$\l_k$ is unique up to a scalar and 
can be given explicitly in terms of the Euler-Gauss hypergeometric function ${}_2F_1(a,b;c;x)$ by
 $$ F_k(\t) \= E_4(\t)^{k/4}{}_2F_1\Bigl(-\frac k{12},-\frac{k-4}{12};-\frac{k-5}6;\frac{1728}{j(\t)}\Bigr)\,, $$
 where $j(\t)=E_4(\tau)^3/\D(\t)$ is the usual  modular invariant 
 (here $\D(\t)=q\prod_{n\ge1}(1-q^n)^{24}=(E_4(\t)^3-E_6(\t)^2)/1728\in M_{12}$, the Ramanujan discriminant function), 
and the eigenfunction with eigenvalue $\l_{k-12n}$ is then~$\D(\t)^nF_{k-12n}(\t)$. 
In particular the function $F_k(\t)$ is a solution (and in fact the unique solution in~$M_k$ up to a scalar) 
of the MLDE $L_{2,k}(f)=0$, where $L_{2,k}$ is the MLDO
\be\label{KZDO}  L_{2,k} \= \d_k^2\m\l_k\,E_4 \ee  
of order~2 and type $(k,k+4)$. Written out explicitly in terms of $D$ rather than~$\d$, this differential
equation takes the form
\be\label{KZ}  f''(\t) \m \frac {k+1}6\,E_2(\t)f'(\t) \+ \frac{k(k+1)}{12}\,E_2'(\t)\,f(\t) \= 0 \;. \ee  

Our third example has order~3.  Recall that the ``Thetanullwerte" $\th_j(\t)$ ($j=2,3,4$) are the values
at $z=0$ of the Jacobi theta functions $\th_j(\t,z)$ (the function $\th_1(\t,z)$ vanishes at~$z=0$), given
explicitly as $\sum q^{n^2/2}$, $\sum q^{(n+\frac12)^2/2}$, and $\sum(-1)^nq^{n^2/2}$, respectively,
and are individually modular forms of weight~$1/2$ on the congruence subgroup~$\G(2)$ of~$\G_1$ but are 
permuted (up to roots of unity) by the action of~$\G_1$.  The space of modular forms of weight~$k=n/2$ spanned 
by $\th_2^n$, $\th_3^n$ and $\th_4^n$ is therefore $\MG$-invariant for any value of~$n$ and is 
3-dimensional for $n\ne4$.  It follows from the considerations given in Section~\ref{SolSpace} that this 
space is the solution space of an MLDE of order~3, and by looking at the first few coefficients of the 
$q$-expansions we find that this equation is $\d_k^3 f -\frac{3k^2-6k+8}{144}E_4\d_k f - \frac{k^2(k-6)}{864}f=0$,
which in terms of ordinary derivatives becomes 
\be\label{thetaNull} f''' \m \frac{k+2}4 E_2f'' \+ \Bigl(\frac{(k+1)(k+2)}4E_2'+ \frac k8E_4\Bigr)f' 
 \m \Bigl(\frac{k(k+1)(k+2)}{24} E_2''\+ \frac{k^2}{32}E_4'\Bigr)f \= 0\,. \ee

Finally, as already mentioned in the introduction, there are many examples of interesting MLDEs coming from the theory 
of vertex operator algebras and their characters.  Roughly speaking, for a wide class of vertex operator algebras there 
are finitely many irreducible modules, each of which has a character (a power series in~$q$ whose coefficients are
the dimensions of its graded pieces) that is known~\cite{Zhu} to be modular, and the space spanned by all of these
characters is always the solution space of some~MLDE. We omit all definitions here, since this is not our main subject,
but refer for instance to the \cite{MMS1, MMS2, AKNS,AN,ANSZ, KNS, MNS, RCM, NS1} and their bibliographies
for more details and examples.  A very simple case is the so-called (2,5) minimal model, whose two characters are
the famous Rogers-Ramanujan modular functions 
\be\label{RReqs}  G_0(\t) \= \sum_{n=0}^\infty\frac{q^{n^2-\frac1{60}}}{(1-q)(1-q^2)\cdots(1-q^n)}\,, \;\quad
G_1(\t) \= \sum_{n=0}^\infty\frac{q^{n^2+n+\frac{11}{60}}}{(1-q)(1-q^2)\cdots(1-q^n)}\,. \ee
These are individually only invariant under the congruence subgroup $\G(5)$ of~$\MG$ but
together span the solution space of the MLDE $\d^2f-\frac{11}{3600}E_4f=0$ (which can be written simply as 
$L_{2,1/5}f=0$ where $L_{2,k}$ is defined as in~\eqref{KZ}, but now with $k$ being $\frac15$ rather than an even 
integer)~\cite{MMS1}.
Similarly the third-order MLDO annihilating the $n$\thin th powers of the Thetanullwerte arises in connection with the 
space of characters of the lattice VOA associated to the root lattice~$D_n$. (See~\cite{MMS2} or~\S3 of~\cite{MNS}.)
Sometimes one can also use the modular transformation properties of solutions of an MLDE ``in reverse" to show the 
{\it non}-existence of VOAs of certain types.  For instance, the non-existence of a certain apparently possible 
simple VOA with central charge $c=164/5$ was proven in~\cite{AN} using the third order MLDE 
\be\label{L3}
  f''' \m \frac12\,E_2\,f'' \+ \Bigl(\frac12\,E_2'-\frac{169}{100}E_4\Bigr)\,f' \+ \frac{1271}{1080}\,E_6\,f\= 0\,, \ee
whose solution space is spanned by three explicit homogeneous polynomials of degree~82 in the Rogers-Ramanujan functions
$G_0$ and~$G_1$ with huge coefficients.  As a more complicated example, the 5th  order MLDE 
\ba \label{L5} & D^5(f) \m \frac{5}{3}E_2 D^4(f)+\Bigl(10E_2'+\frac{83}{99}E_4\Bigr) D^3(f)
 \m \Bigl(10E_2''+\frac{83}{66}E_4'+\frac{427}{3267}E_6\Bigr)D^2(f)\\
&\qquad \+ \Bigl(\frac{5}{3}E_2'''+\frac{83}{330}E_4''+\frac{427}{9801}E_6'+\frac{202}{107811}E_8\Bigr)D(f)
 \+\frac{7888}{39135393}E_{10}f\=0 \ea
is used in~\cite{NS1} in connection with the minimal model of minimal model of type (2,11) and central charge $c=-232/11$.

{\bf Remark.} Not all solutions of modular linear differential equations are necessarily modular forms.   For instance, for the 
Kaneko-Zagier equation,  one solution is always modular, and if the  parameter is~$\frac15$ there are two independent modular solutions
 (namely, the two Rogers-Ramanujan functions multipllied by a power of~$\eta(\t)$), but in general the second element of a basis
of solutions is a linear combination of products of modular forms and of Eichler integrals of modular forms of weight~2,
as discussed in~\cite{HK} and~\cite{Gu}.  This second type of solution was described in~\cite{Gu} as a mixed mock modular form,
and this is correct but is somewhat misleading since the mock modular forms occurring are of weight~0, so have
shadows of weight~2, and the non-holomorphic Eichler integral of a modular form of weight~2 is simply the
complex conjugate of the ordinary Eichler integral of a different modular form of weight~2, so that the notions
of ``mock" and ``mixed mock" are not needed at all.  (In fact, it seems probable that truly mixed mock modular
forms, i.e., sums of products of modular forms and of mock modular forms of weight different from zero, can
never be the solutions of any MLDE.)  Perhaps the best way to see these non-modular
solutions is as modular forms of second or higher order in the sense introduced in\cite{CDO} and~\cite{KlZ}, the latter in 
connection with a specific second order differential equation coming from percolation theory.

\section{Higher order examples: Rankin-Cohen brackets and higher Serre derivatives} \label{RCBandHSD}

The examples given above all had specified and quite small orders.  Here we consider three families of 
examples of MLDOs of arbitrary order.  The first and most important is given by the {\it Rankin-Cohen bracket}
\be\label{rc-bracket}
\bigl[f,\,g\bigr]^{(k,\,\ell)}_n\=\sum_{i=0}^{n}(-1)^i\binom{n+k-1}{n-i}\binom{n+\ell-1}{i}\,D^i(f) \,D^{n-i}(g)
   \qquad\bigl(n\in\Z_{\ge0}\bigr)\,,  
\ee
which belongs to $M_{k+\ell+2n}(\G)$ if $f\in M_k(\G)$ and $g\in M_\ell(\G)$ for any group~$\G$. 
These bilinear operations will play a key role in the whole paper and will be discussed and generalized 
in Section~\ref{ERCB}.  The map $f\mapsto[f,g]_n^{(k,\ell)}$ for fixed $g\in M_\ell(\G)$ and any $n\in\Z_{\ge0}$
is an MLDO of order~$n$ and type $(k,k+\ell+2n)$. As with the Serre derivative, we will often omit the superscripts
and write simply $[f,g]_n$ for $[f,g]_n^{(k,\ell)}$ when $f$ and $g$ are modular forms of weights~$k$ 
and~$\ell$.  

If $g=E_2$, then the Rankin-Cohen bracket $[\,\cdot\,,g]_n^{(\cdot,2)}$ no longer sends modular forms 
to modular forms, but it was discovered by Kaneko and Koike~\cite[p.~467]{KK} that the modified bracket 
\be \label{KKdef} \K_k^n(f) \: D^n(f) \m \frac{k+n-1}{12}\,\bigl[f,E_2\bigr]_{n-1}^{(k,2)} \ee
sends modular forms of weight $k$ to modular forms of weight $k+2n$ for arbitrary~$k$ and for all 
non-negative integers~$n$. This is an MLDO of type~$(k,k+2n)$ that reduces to the Serre derivative for $n=1$
and to the Kaneko-Zagier operator~$L_{2,k}$ for $n=2$ and that will play an important role in the sequel.  
We will give a proof of the modularity and an interpretation of $\K_k^n(f)$ for $f\in M_k$ as an ``extended 
Rankin-Cohen bracket" of~$f$ with a constant function in Section~\ref{ERCB}.  We should also mention that the
Kaneko-Koike has been rediscovered at least twice, by Henri Cohen and Frederik Str\"omberg (\!\!\cite{CS}, 
Prop.~5.3.27, p.~164) in the above form and by Xuanzhong Dai (\!\!\cite{Dai}, eq.~(1.7)) in the form of
a generalized Rankin-Cohen bracket (slightly different from ours) with~1.

Finally, we have the {\it canonical higher Serre derivatives} $\d_k^{[n]}$, which, like both the iterated Serre 
derivatives~$\d_k^n$ and the Kaneko-Koike operators~$\K_k^n$, are a family of monic MLDOs of order~$n$ and 
weight~$2n$ that reduce to the Serre derivative if~$n=1$, but which have much nicer properties than either 
of these other two families.  These operators were first used in\cite{VZ} in connection with the calculation of special 
values of $L$-functions of Hecke grossencharacters (whose definition again plays no role here and will be omitted) 
and are discussed in detail in Section~5 of the textbook~\cite{123}. They are defined recursively by
\be\label{defVZ}  \d_k^{[n+1]}(f) \= \d_{k+2n}\bigl(\d_k^{[n]}(f)\bigr)
  \m \frac{n(n+k-1)}{144}\,E_4\,\d_k^{[n-1]}(f) \;\qquad(n\ge1)  \ee    
with the initial values $\d_k^{[0]}(f)=f$, $\d_k^{[1]}(f)=\d_k(f)$, and have the attractive 
property that the Rankin-Cohen brackets are given by the {\it same} formula
\be\label{rc-bracket-vz}  \bigl[f, g\bigr]^{(k,\,\ell)}_n
 \=\sum_{i=0}^{n}(-1)^i\binom{n+k-1}{n-i}\binom{n+\ell-1}{i}\,\d^{[i]}(f) \,\d^{[n-i]}(g)\;
 \qquad\bigl(n\in\Z_{\ge0}\bigr) \ee
in terms of the higher Serre derivatives as in terms of the ordinary ones, but with the difference that now 
each individual term of the sum defining $[f,g]_n$ is modular and not just quasimodular.

As already mentioned, the higher Serre derivatives $\d_k^{[n]}$ actually form a much simpler family of 
higher order generalizations of~$\d_k$ than the more obvious iterated Serre derivatives $\d_k^n$, because 
they have a very simple expansion in terms of the usual derivatives~$D^r$ (equation~\eqref{defVZexpl} below), 
whereas no corresponding explicit form for the iterated derivatives $\d_k^n$ in terms of the $D^r$ is known.  
For the Kaneko-Koike operators $\K_k^n$, which can be seen as yet a third type of higher Serre derivatives,
the situation is intermediate, since they are given by a relatively explicit closed formula, 
but as linear combinations of the~$\d_k^{[n]}$ rather than of the $D^n$, as expressed in the following theorem.
\begin{theorem}\label{VZtoKK} The canonical higher Serre derivatives of a holomorphic function $f$ in~$\H$
are related to the usual derivatives by
\be\label{defVZexpl} \d_k^{[n]}(f) \= \sum_{r=0}^n\binom nr\,(k+r)_{n-r}\,
  \Bigl(-\frac{E_2}{12}\Bigr)^{n-r}\,D^r(f) \qquad(k,n\in\Z_{\ge0}), \ee
and the Kaneko-Koike derivatives are related to the canonical higher Serre derivatives by
\be\label{KKfromVZ}  \K_k^n(f) \= \sum_{m=0}^n\binom nm\,\binom{k+n-1}m\,\om_m\,\d_k^{[n-m]}(f) 
  \qquad(k,n\in\Z_{\ge0}), \ee
where the $\om_m$ are modular forms in $M_{2m}(\G_1)$ depending  only on~$m$, the first few being given by
\begin{center}  \def\arraystretch{1.5}  \begin{tabular}{c|ccccccccc}
$m$ & $\;0\;$ & $\;1\,$ & $\;2\;$ & $\;3\;$ & $\;4\;$ & $\;5\;$ & $\;6\;$ & $\;7\;$ & $\;8\;$ \\ \hline
$\om_m$ & $\;1\;$ & $0$ & $-\frac{E_4}{72}$ & $-\frac{E_6}{144}$  & $-\frac{E_4^2}{288}$ & $-\frac{5E_4E_6}{2592}$  
 & $-\frac{9E_4^3+16E_6^2}{20736}$ & $-\frac{35E_4^2E_6}{41472}$ & $-\frac{117E_4^4+128E_4E_6^2}{373248}$ \\
  \end{tabular} \;. \end{center} 
\end{theorem}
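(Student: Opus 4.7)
The two identities are closely linked: once \eqref{defVZexpl} is in hand, \eqref{KKfromVZ} follows by inverting it and substituting into the defining formula~\eqref{KKdef} of $\K_k^n$. The plan is to prove \eqref{defVZexpl} by induction on~$n$ using the three-term recursion~\eqref{defVZ}, then invert this triangular change of basis, and finally use~\eqref{rc-bracket} together with~\eqref{RamDer} to rewrite $\K_k^n$ in the basis of the canonical higher Serre derivatives.

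For \eqref{defVZexpl}, let $F_n$ denote its right-hand side. The cases $n=0,1$ match the definition directly. For the inductive step, I would apply $\d_{k+2n}\=D\m\tfrac{k+2n}{12}E_2$ to $F_n$, use the Leibniz rule, and expand $DE_2\=(E_2^2\m E_4)/12$. The resulting monomials split into three families: those producing $D^{r+1}(f)$; those producing $E_2^{n-r+1}D^r(f)$; and those producing $E_4\thin E_2^{n-r-1}D^r(f)$, the last arising only from $DE_2$. Routine binomial manipulations together with the Pochhammer identity $(k+r)(k+r+1)_{n-r}\=(k+r)_{n-r+1}$ should collect the first two families into $F_{n+1}$ and the third into $\tfrac{n(n+k-1)}{144}E_4\,F_{n-1}$, matching~\eqref{defVZ}.

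The inversion of~\eqref{defVZexpl} is a direct triangular computation whose key input is the vanishing of $\sum_{s=r}^n(-1)^{s-r}\binom{n-r}{s-r}$ for $r<n$, yielding
\be\label{invVZexpl}
 D^n(f) \= \sum_{s=0}^n\binom ns\,(k+s)_{n-s}\,\Bigl(\frac{E_2}{12}\Bigr)^{n-s}\,\d_k^{[s]}(f)\,.
\ee
Substituting~\eqref{invVZexpl} into $\K_k^n(f)\=D^n(f)\m\tfrac{k+n-1}{12}[f,E_2]_{n-1}^{(k,2)}$, expanding the Rankin-Cohen bracket via~\eqref{rc-bracket} with the derivatives of $E_2$ rewritten as polynomials in $E_2,E_4,E_6$ through~\eqref{RamDer}, and then converting each residual $D^i(f)$ back to the $\d_k^{[s]}(f)$ basis via~\eqref{invVZexpl}, puts $\K_k^n(f)$ in the form $\sum_{m=0}^n g_{n,m}(k)\,\d_k^{[n-m]}(f)$ with $g_{n,m}(k)\in\C[E_2,E_4,E_6]$ of weight~$2m$.

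The main obstacle is to show that $g_{n,m}(k)=\binom nm\binom{k+n-1}{m}\,\om_m$, with $\om_m\in M_{2m}(\G_1)$ depending only on~$m$. The modularity of each $g_{n,m}(k)$ is actually automatic: the operators $\d_k^{[j]}$ for $j=0,\dots,n$ are monic MLDOs of pairwise distinct orders, hence freely generate the $M_*(\G_1)$-module of MLDOs on~$M_k$ of order at most~$n$, so any expansion of the MLDO $\K_k^n$ in this basis must have modular coefficients of the matching weight --- in particular, the apparent $E_2$-dependence in the expansion must cancel in every $g_{n,m}(k)$. The precise factorization of the $k$-dependence as $\binom{k+n-1}{m}$ with an $n$- and $k$-independent modular factor $\om_m$ must then be tracked through the Pochhammer symbols $(k+r)_{n-r}$ and the binomial factor $\binom{n+k-2}{n-1-i}$ arising from $[f,E_2]_{n-1}^{(k,2)}$; this combinatorial regrouping is where the real work of the theorem lies. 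Once both facts are established, the tabulated low-order values of $\om_m$ follow from a finite computation, for example by specializing to $n=m$ and comparing a few $q$-expansion coefficients on both sides.
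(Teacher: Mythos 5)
Your treatment of the first identity is correct, though by a different route from the paper's. The inductive verification of \eqref{defVZexpl} against the recursion \eqref{defVZ} does go through: the three families of terms you describe recombine as claimed, using $(k+r)_{n-r}=(k+n-1)\,(k+r)_{n-1-r}$ and $(n-r)\binom nr=n\binom{n-1}r$, and your inversion formula is right (it is exactly what one gets by expanding $\Phi^{(D)}_{f,k}=e^{XE_2/12}\,\Phi^{(\d)}_{f,k}$, which is how the paper obtains \eqref{defVZexpl} in one line from \eqref{CKrelations}). Your approach costs a page of bookkeeping where the paper's generating function costs none, but it proves the same statement.

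For \eqref{KKfromVZ}, however, there is a genuine gap, and you have located it yourself: the assertion that the coefficient of $\d_k^{[n-m]}(f)$ factors as $\binom nm\binom{k+n-1}m\,\om_m$ with $\om_m$ independent of both $n$ and $k$ is the entire content of the second identity, and your plan reduces it to a ``combinatorial regrouping'' that you do not carry out and that is not routine. After substituting your inversion formula into \eqref{KKdef} and expanding $[f,E_2]_{n-1}^{(k,2)}$, the $n$- and $k$-dependence is spread across products of Pochhammer symbols, binomials, and derivatives of $E_2$ rewritten via \eqref{RamDer}, and nothing in the termwise expansion explains why it should separate into $\binom nm\binom{k+n-1}m$ times a universal modular form. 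Your modularity argument for the $g_{n,m}(k)$ is correct (it is in effect Theorem~\ref{bcd-Struct}), but it only gives $g_{n,m}(k)\in M_{2m}$ for each fixed $n$ and $k$; it says nothing about the factorization. The mechanism the paper uses is multiplicativity of Cohen-Kuznetsov series: by \eqref{CKandRCB} the definition \eqref{KKdef} is equivalent to $\Phi^{(\K)}_{f,k}(\t,X)=\Phi^{(D)}_{f,k}(\t,X)\,\Phi_{1,0}(\t,-X)$ (equation~\eqref{CKrelations3}), and multiplying through by $e^{-XE_2/12}$ cancels the exponential factors on both sides to give $\Phi^{(\K)}_{f,k}(\t,X)=\Phi^{(\d)}_{f,k}(\t,X)\,\Phi^{(\d)}_{1}(\t,-X)$; reading off the coefficient of $X^n$ yields \eqref{KKfromVZ} at once, with $\om_m$ defined by \eqref{omDef} as the coefficient of $X^m$ in the universal factor $\Phi^{(\d)}_1(\t,-X)$ --- manifestly independent of $f$, $k$ and $n$, and modular by Proposition~\ref{CK1}. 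If you insist on the direct route you must prove the equivalent combinatorial identity explicitly (say by finding and verifying a recursion in $n$ for $g_{n,m}(k)$), which is substantially harder than everything else in your argument; as written, the theorem's key structural claim is asserted rather than proved.
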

\noindent Here and from now on, $(x)_m$ denotes the ``shifted factorial" $x(x+1)\cdots(x+m-1)$.

The proof of Theorem~\ref{VZtoKK}, and a description of the forms~$\om_m$, will be given in Section~\ref{ERCB}.
\medskip

\section{Structure theorems}  \label{structure}

We now come to our central subject, the description of all MLDEs for a given lattice~$\G$.  We denote
by $\MLDO_{k,k+K}(\G)$ the space of all modular linear differential operators of \hbox{type~$(k,k+K)$} on~$\G$
and by  $\MLDO_{k,k+K}^{(\le n)}(\G)$ ($n\ge0$) the subspace of operators of order~$\le n$,
often omitting~$\G$ from the notations when it is equal to~$\G_1$. We write the expansion of an element 
$L\in\MLDO^{(\le n)}_{k,k+K}(\G)$ in the form
\be\label{a-exp} L\=\sum_{r=0}^n a_r(\t)\,D^r \ee
where $a_0,\dots,a_n$ are holomorphic functions in the upper half-plane. (At first sight it might seem more natural
to write an operator~$L$ of order~$n$ as $a_0D^n+\cdots+a_n$ rather than $a_0+\cdots+a_nD^n$, but this
numbering of the coefficients would not respect the filtration of $\MLDO_{k,k+K}$ or the addition of operators 
of different orders, and would also make our later formulas much more complicated.)  It is easy to see---and can
be seen clearly in the examples~\eqref{KZ}, \eqref{thetaNull}~\eqref{L3} or \eqref{L5} given in the last two
sectlions---that each $a_r$ must be a quasimodular form of weight~$K-2r$ and depth $\le n-r$ (where the 
{\it depth} of a quasimodular form will be discussed in Section~\ref{MLDOandQMF} for general lattices, but 
for $\G_1$ is just the degree of the form as a polynomial in $E_2$). In particular, since the weight of a 
holomorphic quasimodular form cannot be negative, we see that the order of any $L\in\MLDO_{k,k+K}(\G)$ is at 
most~$K/2$ and that we have an injective map 
\ba\label{QuasiModularSum}  
 \MLDO_{k,k+K}^{(\le n)}(\G) \quad & \mybig\hookrightarrow\quad\bigoplus_{r=0}^n \tM_{K-2r}^{(\le n-r)}(\G) \\
    a_0+a_1D+\cdots+a_nD^n \quad& \mybig\mapsto \quad (a_0,a_1,\dots,a_n)\,, \ea
where $\tM_\k^{(\le p)}(\G)$ denotes the space of quasimodular forms of weight $\k$ and depth~$\le p$ 
on the group~$\G$. In particular we have the dimension estimate
\be\label{DimEstimate} \dim\bigl(\MLDO_{k,k+K}^{(\le n)}(\G)\bigr) \;\,\le\;\,
 \sum_{r=0}^n\dim\bigl(\tM_{K-2r}^{(\le n-r)}(\G)\bigr)\;\,\le\;\, \sum_{r=0}^n\dim\bigl(\tM_{K-2r}(\G)\bigr) \ee
(independent of~$k$), but of course the actual dimension is much smaller, because the differential operator~$L$ 
defined by~\eqref{a-exp} with arbitrary quasimodular forms $a_r(\t)$ of weight~$K-2r$ and depth $\le n-r$ as 
coefficients would in general be only a {\it quasimodular} linear differential operator of weight~$K$, and in 
particular would send $M_k(\G)$ for any~$k$ to $\tM_{k+K}(\G)$, but not in general to $M_{k+K}(\G)$.  
There are therefore two natural questions:

\noindent{\bf Question 1.} What conditions must quasimodular forms~$a_r\in \tM_{k-2r}(\G)$ ($0\le r\le n$)
satisfy in order that the differential operator defined by~\eqref{a-exp} is modular?

\noindent{\bf Question 2.} What is the exact dimension of $\MLDO_{k,k+K}^{(\le n)}(\G)\,$?

\noindent The first question will be answered in two ways in Section~\ref{coeffs}. The second is easier and will be 
answered here, although for the moment only for $\G=\G_1$.

\begin{theorem}\label{Struct}
An MLDO of weight~$K$ on the full modular group has order at most $K/2$ and is a linear combination 
of Rankin-Cohen brackets with modular forms of positive weight, together with the $(K/2)$-nd Kaneko-Koike 
operator if the order is equal to~$K/2$.
\end{theorem}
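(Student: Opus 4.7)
The plan is to induct on the order~$n$ of~$L$, stripping off the leading coefficient $a_n$ (the coefficient of $D^n$) at each step by subtracting a suitable Rankin-Cohen bracket or, in one exceptional configuration, the Kaneko-Koike operator. The order bound $n \le K/2$ is immediate from the injection~\eqref{QuasiModularSum}: the $r = n$ component lies in $\tM^{(\le 0)}_{K-2n}(\G_1) = M_{K-2n}$, which vanishes unless $K - 2n \ge 0$, since $\G_1$ carries no nonzero holomorphic modular forms of negative weight.

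For the structural statement, fix $L$ of order exactly~$n$. If $n < K/2$, then $a_n \in M_{K-2n}$ has positive weight, and reading off the $i = n$ term of~\eqref{rc-bracket} shows that the operator $f \mapsto [f, a_n]_n^{(k,K-2n)}$ has $D^n$-coefficient $(-1)^n \binom{K-n-1}{n}\, a_n$. The strict inequality $2n < K$ forces $K - n - 1 \ge n$, so this binomial is nonzero, and subtracting the appropriate scalar multiple of this Rankin-Cohen bracket from~$L$ produces an MLDO of type $(k, k+K)$ of strictly smaller order, to which the inductive hypothesis applies. If instead $n = K/2$, then $a_n \in M_0 = \C$ is a constant, and the Rankin-Cohen trick collapses (the analogous binomial $\binom{n-1}{n}$ vanishes); here one uses that $\K_k^n$ is monic --- visible directly from~\eqref{KKdef}, since the correction $[f, E_2]_{n-1}^{(k,2)}$ has order $n - 1$ in~$f$ --- so that $L - a_n\,\K_k^n$ has order less than~$n$. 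From this point on every further reduction has order strictly below $K/2$ and is handled by the first case, so at most one Kaneko-Koike operator appears in the final expression.

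The base case $n = 0$ is trivial: for $K > 0$ the operator $L$ is multiplication by some $a_0 \in M_K$, which is literally $[\cdot, a_0]_0^{(k, K)}$, and for $K = 0$ (so $n = 0 = K/2$) it is absorbed into the second case via $\K_k^0 = \mathrm{id}$. The whole argument is thus a filtered triangular elimination with respect to the order, and there is no substantial obstacle: the only things to verify are the identification of the leading coefficient in~\eqref{rc-bracket} and the monicity of~\eqref{KKdef}, both immediate from the definitions. The slightly delicate point worth stressing is the dichotomy --- the binomial coefficient $\binom{K-n-1}{n}$ is nonzero in the ``generic'' range $n < K/2$ but vanishes exactly when $n = K/2$ --- which is precisely what makes the Kaneko-Koike operator an unavoidable addition to the list of generators.
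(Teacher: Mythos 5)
Your proposal is correct and follows essentially the same route as the paper's own proof: establish the order bound $n\le K/2$ from the injection~\eqref{QuasiModularSum}, then induct downward on the order, subtracting a multiple of $[\,\cdot\,,a_n]_n^{(k,K-2n)}$ when $n<K/2$ and of the monic operator $\K_k^n$ when $n=K/2$. The only addition is that you make explicit the leading coefficient $(-1)^n\binom{K-n-1}{n}$ of the Rankin--Cohen bracket and observe that its vanishing precisely at $n=K/2$ is what forces the Kaneko--Koike operator into the list of generators, a detail the paper states only qualitatively.
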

\begin{corollary*} 
The dimension of $\MLDO_{k,k+K}^{(\le n)}=\MLDO_{k,k+K}^{(\le n)}(\G_1)$ is given for any $k$, $K$ and $n$ by
\be\label{DimMLDO} \dim\bigl(\MLDO_{k,k+K}^{(\le n)}\bigr) \= \sum_{r=0}^n\dim\bigl(M_{K-2r}\bigr)\,. \ee
\end{corollary*}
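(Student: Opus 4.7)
The plan is to induct on the order $n$ of $L\in\MLDO_{k,k+K}^{(\le n)}$. First I would use the embedding \eqref{QuasiModularSum} to derive the order bound $n\le K/2$: since the leading coefficient $a_n$ lies in $\tM_{K-2n}^{(\le 0)}$, we must have $K-2n\ge 0$. Moreover the depth-$0$ constraint means $a_n$ is a genuine modular form, $a_n\in M_{K-2n}$. This is the crucial input that lets one peel off the top-order term of~$L$ using one of the MLDOs constructed in Section~\ref{RCBandHSD}, and two cases arise.

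In the first case, when $2n<K$, the coefficient $a_n$ is a modular form of positive weight $K-2n$, and I would use the Rankin-Cohen bracket $L_1:=[\,\cdot\,,a_n]_n^{(k,K-2n)}$, which is an MLDO of order~$n$ and type $(k,k+K)$. Its $D^n$-coefficient, read off from the $i=n$ term of \eqref{rc-bracket}, equals $(-1)^n\binom{K-n-1}{n}\,a_n$, and this binomial coefficient is non-zero because $K-n-1\ge n$. Hence for an appropriate non-zero scalar $c$ the MLDO $L-cL_1$ has vanishing $D^n$-coefficient, so order strictly less than~$n$, and induction applies. In the second case, when $2n=K$, the constant $a_n\in M_0=\C$ is subtracted off using the monic Kaneko-Koike operator $\K_k^n$ of \eqref{KKdef}, which has the correct order and type, so $L-a_n\K_k^n$ again has strictly lower order. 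The base case $n=0$ is trivial: a zeroth-order MLDO is multiplication by some $g\in M_K$ and equals $[\,\cdot\,,g]_0^{(k,K)}$.

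For the corollary, I would argue that the explicit spanning set produced above has cardinality exactly $\sum_{r=0}^n\dim(M_{K-2r})$: for each $r$ with $2r<K$ and $r\le n$ the Rankin-Cohen brackets $[\,\cdot\,,g_i]_r^{(k,K-2r)}$ with $g_i$ running over a basis of $M_{K-2r}$ contribute $\dim(M_{K-2r})$ generators, and when $K$ is even and $n\ge K/2$ the Kaneko-Koike operator $\K_k^{K/2}$ supplies the final $\dim(M_0)=1$ generator (all other terms in the sum vanish). Linear independence would follow from a filtration argument: MLDOs of different order are separated by their leading $D^r$-coefficient; within fixed order $r<K/2$ the brackets have leading coefficients proportional to the linearly independent $g_i$; and the monic Kaneko-Koike operator in top order $K/2$ is independent of all lower-order contributions. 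Combined with the upper bound from the inductive argument, this proves \eqref{DimMLDO}.

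The main obstacle is the boundary case $2n=K$. Here the Rankin-Cohen construction degenerates, since $\binom{K-n-1}{n}=\binom{n-1}{n}=0$, so Rankin-Cohen brackets with modular forms of positive weight can never produce an operator of order exactly $K/2$. The role of the monic Kaneko-Koike operator is precisely to supply the one extra generator needed, matching the $\dim(M_0)=1$ term on the right-hand side of \eqref{DimMLDO}.
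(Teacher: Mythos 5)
Your proposal is correct and follows essentially the same route as the paper's proof: peel off the top coefficient $a_n\in M_{K-2n}$ using the Kaneko--Koike operator $\K_k^{K/2}$ when $2n=K$ and a Rankin--Cohen bracket $[\,\cdot\,,a_n]_n^{(k,K-2n)}$ when $2n<K$, then induct downward, with linear independence read off from the leading $D^r$-coefficients. Your explicit computation of the leading coefficient $(-1)^n\binom{K-n-1}{n}a_n$, and the observation that this binomial vanishes precisely at the boundary $K=2n$, is a nice sharpening of the paper's qualitative remark but not a different argument.
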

\begin{proof}  
We have already seen the first statement of the theorem, that the order of any $L$ in $\MLDO_{k,k+K}$
is at most $K/2$.  If it is exactly~$K/2$, then the top coefficient $a_n(\t)$ in~\eqref{a-exp} is
a (holomorphic) quasimodular form of weight~0 and hence constant.  Since the $n$\thin th Kaneko-Koike 
operator~\eqref{KKdef} begins with~$D^n$, we can simply subtract~$a_0\,\K_k^n$ from $L$ to reduce
it to an operator of lower order.  Now if the order is~$n<K/2$, then the top coefficient $a_n(\t)$ 
in~\eqref{a-exp} is a quasimodular form of weight $K-2n$ and depth~0, and hence actually a modular form.
Then, since the Rankin-Cohen bracket~\eqref{rc-bracket} of a modular form $f$ of weight~$k$ and a modular
form $g$ of weight~$\ell>0$ has an expansion beginning with a non-zero multiple of $f^{(n)}g$, we see
that by subtracting from $L$ a suitable non-zero multiple of $\bigl[\,\cdot\,,a_n\bigr]_n^{(k,K-2n)}$
we can again reduce its order by at least one.  Continuing by induction, we obtain the theorem. The corollary 
is then immediate since $\dim M_0=1$ and since the $(K/2)$\thin th Kaneko-Koike operator and the $r$\thin th 
Rankin-Cohen brackets with modular forms of strictly positive weight $K-2r$ are easily seen to be linearly independent.
\end{proof}

Theorem~\ref{Struct} gives us canonical isomorphisms 
\be\label{ModularSum}  \MLDO_{k,k+K} \; \cong \;\bigoplus_{r\ge0} M_{K-2r}\,, \qquad
  \MLDO_{k,k+K}^{(\le n)} \; \cong \;\bigoplus_{r=0}^n M_{K-2r}\,, \ee
defined (from right to left) by mapping $1\in M_0$ to the Kaneko-Koike operator~\eqref{KKdef} (or to any chosen 
multiple of it) and $g\in M_{K-2r}$ with $K-2r>0$ to the $r$\thin th Rankin-Cohen bracket $[\,\cdot\,,g]_r^{k,K-2r}$ (or again
to any chosen multiple of it, where the multiples in each case can be arbitrary non-zero numbers depending only on $k$, $K$ and~$r$).
But three further descriptions can be obtained by noticing that any MLDO, as well as the expansion~\eqref{a-exp} in 
terms of ordinary higher derivatives, has three further expansions
\be\label{bcd-exp} L \=\sum_{r=0}^n b_r(\t)\,\d_k^r  \=\sum_{r=0}^n c_r(\t)\,\K_k^r  \=\sum_{r=0}^n d_r(\t)\,\d_k^{[r]} \ee
in terms of the three different types of higher-order Serre derivatives introduced in Section~\ref{RCBandHSD}, and unlike
the situation for the original expansion~\eqref{a-exp} the condition on the coefficients here is trivial: 
since each of $\d_k^r$, $\K_k^r$, and $\d_k^{[r]}$ preserves modularity and increases the weight by~$2r$, we simply need that
each of the three coefficients $b_r$, $c_r$ and $d_r$ belongs to $M_{K-2r}$ for each~$r$.  This gives
  
\begin{theorem}\label{bcd-Struct} We have three isomorphisms~\eqref{ModularSum} given by mapping 
an operator $L\in \MLDO^{(\le n)}_{k,k+K}$ with the three expansions~\eqref{bcd-exp} to one of the vectors 
$(b_0,\dots,b_n)$, $(c_0,\dots,c_n)$, or $(d_0,\dots,d_n)$.
\end{theorem}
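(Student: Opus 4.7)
The plan is to prove each of the three claimed isomorphisms by the same argument; I will sketch it for the $\d_k^r$-expansion, with the other two proceeding verbatim.

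First I would observe that all three families $\{\d_k^r\}_{r\ge0}$, $\{\K_k^r\}_{r\ge0}$, and $\{\d_k^{[r]}\}_{r\ge0}$ are \emph{monic} in the sense that each member is a differential operator of order exactly~$r$ with leading term~$D^r$. This is immediate from~\eqref{Serrederiv} for the iterated Serre derivative, from the $r=n$ summand of~\eqref{defVZexpl} for the canonical higher Serre derivative, and from~\eqref{KKdef} together with the fact that the bracket $[\,\cdot\,,E_2]_{n-1}^{(k,2)}$ has order at most $n-1$ for the Kaneko-Koike operator. Consequently, any holomorphic linear differential operator of order $\le n$ admits unique expansions of each of the three forms in~\eqref{bcd-exp}: one takes the top coefficients $b_n = c_n = d_n$ to be the leading coefficient $a_n$ of the $D$-expansion~\eqref{a-exp} and determines the remaining ones by recursion on decreasing order.

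The main step is to show that if $L$ is in fact an MLDO, then each coefficient $b_r$ (and similarly $c_r$, $d_r$) lies in $M_{K-2r}(\G)$. I would argue by descending induction on~$r$. From~\eqref{QuasiModularSum} the leading coefficient $a_n$ is a quasimodular form of weight $K-2n$ and depth~$0$, i.e.\ an honest modular form, so $b_n = a_n \in M_{K-2n}(\G)$. The key mechanism is the elementary observation that if $h \in M_w(\G)$ and $M$ is an MLDO of type $(k, k+w')$, then $h\,M$ is an MLDO of type $(k, k+w+w')$; this follows in one line from the multiplicativity of the slash operator $(h\cdot Mg)|_{k+w+w'}\g = (h|_w\g)(Mg|_{k+w'}\g)$ together with the modularity of~$h$. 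Applied with $h = b_n$ and $M = \d_k^n$, it tells us that $b_n\,\d_k^n$ is an MLDO of type $(k,k+K)$, so $L - b_n\,\d_k^n$ is an MLDO of strictly smaller order to which the inductive hypothesis applies.

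Finally, to assemble the isomorphism~\eqref{ModularSum}, linearity of $L \mapsto (b_0,\dots,b_n)$ is clear, injectivity is immediate (the operator is reconstructed from the tuple), and surjectivity is the observation above read in reverse: any tuple $(b_0,\dots,b_n)$ with $b_r \in M_{K-2r}(\G)$ gives an operator $\sum_r b_r\,\d_k^r$ which is a sum of MLDOs and hence itself an MLDO of type $(k,k+K)$. I do not expect any significant obstacle; the theorem is essentially a clean consequence of the monicity of the three families of higher Serre derivatives combined with the ``multiplication by a modular form of matching weight preserves MLDOs'' principle, and there is no substantial combinatorics beyond what is already organized in the statement.
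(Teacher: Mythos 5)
Your proposal is correct and follows essentially the same route as the paper: the paper justifies the theorem by noting that each of the three families is monic of order $r$ (so the expansions exist and are unique, with $b_n=c_n=d_n=a_n\in M_{K-2n}$ by the depth estimate) and preserves modularity while raising the weight by $2r$, so that the coefficient condition reduces to $b_r,c_r,d_r\in M_{K-2r}$, exactly the two ingredients you isolate. Your descending induction subtracting $b_n\,\d_k^n$ is the same order-reduction step used in the paper's proof of Theorem~\ref{Struct}, here made explicit for the three Serre-derivative expansions.
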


Thus, although the final statement of the isomorphism of $\MLDO_{k,k+K}$ or $\MLDO^{(\le n)}_{k,k+K}$ to
a known space is the same in Theorem~\ref{bcd-Struct} as in Theorem~\ref{Struct}, the actual isomorphisms
are completely different: whereas  in Theorem~\ref{Struct} we mapped a modular form $g\in M_{K-2r}$
to (a multiple of) $[\,\cdot\,,g]_r$ or $\K_k^r(\,\cdot\,)$ (depending whether $2r<K$ or $2r=K$), which is a 
linear combination of derivatives of~$g$ times powers of~$D$, we now map the same~$g$ to an operator
that again begins with $gD^r$ but is now simply a multiple of~$g$, no longer containing any of its higher derivatives.

We can summarize the whole discussion of this section formally in terms of the filtration and its splittings.  
For any lattice $\G\subset\SL$ and any integer $n\ge0$ we have a canonical map  $\s$ from 
$\MLDO^{(\le n)}_{k,k+K}(\G)$ to $M_{K-2n}(\G)$ assigning to an operator $L$ with the $D$-expansion~\eqref{a-exp} its
top coefficient~$a_n$, which we call the {\it symbol} of~$L$.  For $\G=\G_1$ this gives a short exact sequence 
\be\label{exact}  0 \,\longrightarrow\, \MLDO^{(\le n-1)}_{k,k+K} \,\longrightarrow\, 
      \MLDO^{(\le n)}_{k,k+K} \,\overset\s\longrightarrow\, M_{K-2n} \,\longrightarrow\, 0\,, \ee
where the exactness at all places except the last (i.e., the surjectivity of~$\s$) is clear and the last follows
there from any of the four splittings of~$\s$ that we have given (mapping $g\in M_{K-2n}$ to the 
$n$\thin th Rankin-Cohen bracket with $g$ if $K-2n>0$ and to the $n$\thin th Kaneko-Koike operator \hbox{if~$g=1$}, 
or else multiplying it with any of the three Serre derivatives of order~$n$ defined in Section~\ref{RCBandHSD}),
corresponding to the four isomorphisms between MLDOs and vectors of modular forms described in Theorems~\ref{Struct}
and~\ref{bcd-Struct}. The space $\MLDO_{k,k+K}$ is filtered by the order and the exact sequence~\eqref{exact} gives 
a canonical identification of the graded vector space $\bigoplus_{n\ge0}\MLDO^{(\le n)}_{k,k+K}(\G)/\MLDO^{(\le n-1)}_{k,k+K}(\G)$ 
 associated to this filtration with~$\bigoplus_{n\ge0} M_{K-2n}$,
with each of the four splittings giving an isomorphism of $\MLDO_{k,k+K}$ with the same space.
Since all four isomorphisms induce the {\it same} isomorphism of graded as opposed to filtered 
vector spaces (because the exact sequence~\eqref{exact}, unlike its splittings, is canonical), the passage from any one to any other
can be described in terms of block unipotent matrices (blocks of size $\dim M_{K-2r}$, with zeros
below the diagonal and identity matrices on the diagonal), but not all of these can be made explicit.
In fact, the only passage between the different isomorphisms that we know how to describe in closed form
is that between the isomorphism induced by the Kaneko-Koike operators $\K_k^r$ and the canonical higher
Serre derivatives $\d_k^{[r]}$, because of Theorem~\ref{VZtoKK}.

We end this section by giving examples of all four isomorphisms we have described for some of the explicit
examples of MLDOs introduced in Section~\ref{examples}. For the Kaneko-Zagier operator $L_{2,k}$ this is almost
trivial: it was already given as a combination of $D^r$ and of $\d_r$ in equations~\eqref{KZ} and~\eqref{KZDO}; 
its expression as a linear combination of $\K_k^2$ and a Rankin-Cohen bracket or as a linear combination with
modular coefficients of $\K_k^r$ is simply $\K_k^2$ itself, and in terms of the higher Serre derivatives 
it equals $\d_k^{[2]} -\frac{k(k+1)}{144}E_4$.  For the third-order MLDO $L_{3,k}^\theta$ of type $(k,k+6)$
defined by the left-hand side of~\eqref{thetaNull}, the expression corresponding to Theorem~\ref{Struct} is 
$$ L_{3,k}^\theta f \= \K_k^3(f) \m \frac k{32}\, \bigl[f,E_4\bigr]^{(k,4)}_1  $$
and those corresponding to Theorem~\ref{bcd-Struct} are
\ba\label{alpha}
 L_{3,k}^\theta f & \= \d_k^3(f) \m \frac{3k^2-6k+8}{144}E_4\d_k(f)  \m \frac{k^2(k-6)}{864}\,E_6\,f \\
   &\= \K_k^3(f) \+ \frac k8\,E_4\,\K_k^1(f)  \+ \frac{k^2}{96}\,E_6\,f \\
  & \= \d_k^{[3]}(f) \m \frac{(k-1)(k-2)}{48}\,E_4\,\d_k(f) \m \frac{k(k^2-6k+2)}{864}\,E_6\,f\,,
\ea
while for the third-order MLDO of type (0,6) defined by $L_3=D^3-\frac12E_2D^2 +(\frac12E_2'-\frac{169}{100}E_4)D$
(i.e.,~the left-hand side of equation~\eqref{L3} without the final term, which is modular anyway) the 
corresponding expressions, which are easier here because $k$ is~0 rather than a variable, are
\ba\label{beta}
L_3(f) &\= \K_0^3(f) \+ \frac{169}{400}\,\bigl[f,E_4\bigr]^{(k,4)}_1 \\ 
 & \= \d_0^3(f) \m \frac{1571}{900}\,E_4\,\d_0(f)  
   \= \K_0^3(f) \m \frac{169}{100}\,E_4\,\K_0^1(f)   \= \d_0^{[3]}(f) \m \frac{1039}{600}\,E_4\,\d_0(f)  \,.
\ea
\smallskip
Similarly, writing $L_5$ for the operator in~\eqref{L5} without its last term, we find the four representations
\ba\label{gamma}
L_5(f) &\= \K_0^5(f) \m  \frac{83}{1980}\,\bigl[f,E_4\bigr]^{(0,4)}_3 \m 
   \frac{61}{9801}\,\bigl[f,E_6\bigr]^{(0,6)}_2 \m \frac{101}{431244}\,\bigl[f,E_8\bigr]^{(0,8)}_1   \\
&\=\d_0^5(f) \m \frac{53}{396}\,E_4\,\d_0^3(f) \+ \frac{295}{8712}\,E_6\,\d_0^2(f) \m \frac{6151}{1724976}\,E_8\,\d_0(f) \\
&\=\K_0^5(f) \+ \frac{83}{99}\,E_4\,\K_0^3(f) \+ \frac{1885}{6534}\,E_6\,\K_0^2(f) \+ \frac{7181}{143748}\,E_8\,\K_0(f) \\
&\=\d_0^{[5]}(f) \+ \frac1{198}\,E_4\,\d_0^{[3]}(f) \+ \frac{35}{3267}\,E_6\,\d_0^{[2]}(f) 
  \m \frac{2689}{1149984}\,E_8\,\d_0(f)\;.
\ea

\section{The expansion coefficients of modular linear differential operators}  \label{coeffs}

We now turn to the first question posed in the last section, namely, the determination of the conditions 
that must be satisfied by the coefficients $a_r$ in~\eqref{a-exp} in order that the differential operator 
defined there is modular.

Of course, from one point of view the answer to this question is almost tautological: we define
a different series of coefficients $b_s$ by requiring the first expansion in~\eqref{bcd-exp} to hold; these 
are computable combinations of the $a$'s and their derivatives and hence are automatically quasimodular of
the correct weights, and $L$ is an MLDO if and only if they are actually modular.  For small orders~$n$
we can carry this process out by hand, but for general~$n$ we cannot, because the expansion of $\d_k^s$
as a polynomial in $D$ with quasimodular coefficients is not known in closed form. We can, in fact, do
this if we use the third expansion in~\eqref{bcd-exp} together with equation~\eqref{defVZexpl} and its easy
inversion, in which case we find the following result, whose proof will be given in Section~\ref{ERCB}
when we prove~\eqref{defVZexpl}:
\begin{theorem}\label{atob} The operator $L$ defined by~\eqref{a-exp} is modular of type $(k,k+K)$ if and only if the function
\be\label{a-b}  d_r(\t) \: \sum_{j\ge 0} \binom{r+j}j\,(k+r+j)_j\,\Bigl(\frac{E_2(\t)}{12}\Bigr)^j\,a_{r+j}(\t) \ee
is a modular form for $r=0,\dots,n$, in which case $L$ is given by the final expression in~\eqref{bcd-exp}.
\end{theorem}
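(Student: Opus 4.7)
The plan is to establish the identity $L = \sum_{r=0}^n d_r(\t)\,\d_k^{[r]}$ as an identity of operators on holomorphic functions on $\H$, by inverting the expansion formula \eqref{defVZexpl}, and then to exploit the slash-intertwining property of the canonical higher Serre derivatives to convert modularity of $L$ into modularity of each $d_r$.

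Since the expansion \eqref{defVZexpl} of $\d_k^{[n]}$ as a polynomial in $D$ with quasimodular coefficients is upper triangular with unit leading term, it admits an inverse of the same triangular form. A direct combinatorial verification --- using $\binom{n}{s+t}\binom{s+t}{s} = \binom{n}{s}\binom{n-s}{t}$, the multiplicativity $(k+s)_t\,(k+s+t)_{n-s-t} = (k+s)_{n-s}$ of the Pochhammer symbol, and $\sum_{t=0}^{n-s}\binom{n-s}{t}(-1)^t = \delta_{n,s}$ --- shows that the inverse is given by the same formula as \eqref{defVZexpl} but with the sign of $E_2/12$ flipped, so that $D^n$ becomes an explicit triangular combination of $\d_k^{[0]},\dots,\d_k^{[n]}$ with Pochhammer-type coefficients. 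Substituting this into $L = \sum_r a_r\,D^r$ and swapping summation order via the change of index $j = r - s$ then produces both the expansion of $L$ as $\sum_r d_r\,\d_k^{[r]}$ and the explicit formula \eqref{a-b} for the coefficients $d_r$ as the claimed $E_2$-series in the $a_{r+j}$.

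For the modularity criterion, the key input is that each $\d_k^{[r]}$ intertwines the slash actions in weights $k$ and $k+2r$, i.e., $\d_k^{[r]}(f|_k\gamma) = (\d_k^{[r]} f)|_{k+2r}\gamma$ for every $\gamma\in\SL$ and every holomorphic $f$. This is provable by induction on $r$ from the recursion \eqref{defVZ}, the known transformation of the ordinary Serre derivative, and the modularity of $E_4$. Granted this, both $L(f|_k\gamma)$ and $L(f)|_{k+K}\gamma$ can be expanded through $L = \sum_r d_r\,\d_k^{[r]}$ and the product rule $(gh)|_{k+K}\gamma = (g|_{K-2r}\gamma)(h|_{k+2r}\gamma)$ for the slash operator, and their difference collapses to $\sum_r(d_r - d_r|_{K-2r}\gamma)\,(\d_k^{[r]} f)|_{k+2r}\gamma$. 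Because the operators $\d_k^{[r]}$ for distinct $r$ have distinct orders and are therefore linearly independent over holomorphic functions of $\t$, $L$ is an MLDO of type $(k,k+K)$ if and only if $d_r = d_r|_{K-2r}\gamma$ for all $r$ and all $\gamma\in\G$, that is, precisely when each $d_r$ is a holomorphic modular form of weight $K-2r$.

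The only real technical step is the combinatorial inversion of \eqref{defVZexpl}; everything else is formal manipulation of operator identities and of the slash product rule. Once the inversion is in hand, both the explicit form of the $d_r$ and the two-sided modularity equivalence follow cleanly from the separate handling of each $\d_k^{[r]}$-component.
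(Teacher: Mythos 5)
Your strategy is exactly the one the paper intends: Theorem~\ref{atob} is stated in the text to follow from equation~\eqref{defVZexpl} ``and its easy inversion,'' and your argument supplies precisely that, so this is essentially the paper's proof. The one presentational difference is that the paper obtains both \eqref{defVZexpl} and its inverse in one stroke from the Cohen--Kuznetsov identity $\Phi^{(D)}_f(\t,X)=e^{XE_2(\t)/12}\,\Phi^{(\d)}_f(\t,X)$ of~\eqref{CKrelations}: multiplying by $e^{\mp XE_2/12}$ converts either expansion into the other, which is why the inverse is ``the same formula with the sign flipped.'' Your hand verification via $\binom{n}{s+t}\binom{s+t}{s}=\binom ns\binom{n-s}t$, the multiplicativity of the shifted factorials, and the alternating binomial sum is correct but re-proves what the exponential generating function already encodes. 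Likewise your linear-independence argument for the two-sided equivalence (distinct orders of the $\d_k^{[r]}$) replaces the paper's implicit downward induction on the order of~$L$; both are fine.

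Two points need repair. First, the intertwining $\d_k^{[r]}(f|_k\g)=(\d_k^{[r]}f)|_{k+2r}\g$ does \emph{not} hold for all $\g\in\SL$: already for $r=1$ it is equivalent to $E_2|_2\g=E_2+\frac{12}{2\pi i}\frac c{c\t+d}$, i.e.\ to \eqref{E2transf}, which holds only for $\g\in\G_1$. Your induction via \eqref{defVZ} uses exactly this together with the $\G_1$-modularity of $E_4$, so it proves the intertwining only for $\g\in\G_1$. That is harmless for the theorem, since only $\g\in\G\subseteq\G_1$ is needed (for general non-cocompact lattices one must anyway replace $E_2/12$ by a splitting $\E$ as in Section~\ref{cocompact}), but the claim as written is false and should be restricted. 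Second, if you actually carry out the substitution you describe, the coefficient of $\d_k^{[r]}$ comes out as $\sum_{j\ge0}\binom{r+j}j\,(k+r)_j\,(E_2/12)^j\,a_{r+j}$, with Pochhammer factor $(k+r)_j$ rather than the $(k+r+j)_j$ printed in \eqref{a-b}. A check on the Kaneko--Zagier operator $L_{2,k}=\d_k^{[2]}-\frac{k(k+1)}{144}E_4$, where $a_2=1$ and $a_1=-\frac{k+1}6E_2$ must give $d_1=0$, confirms that $(k+r)_j$ is correct (and is consistent with \eqref{a-transfeq}, \eqref{actionofd} and \eqref{inversion}), so \eqref{a-b} as printed contains a typo. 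Your write-up asserts that the computation ``produces the explicit formula \eqref{a-b}'' verbatim; it does not, and you should either flag the discrepancy or state the corrected formula.
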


It is nevertheless interesting to give a direct description of the conditions that the original 
coefficients~$a_r$ must fulfill in order for~$L$ to be modular.  We will do this first in terms of the
modular transformation properties that they must satisfy and then give a second characterization
by constructing certain explicit linear combinations of the~$a_r$ and their derivatives whose 
modularity is a necessary and sufficient condition for that of the operator~$L$. 

To study the transformation properties of the $a_r$, we will need the following lemma.

\begin{lemma} \label{Transf1}
For any holomorphic function $f$, $\g=\sm abcd\in\SL$, $k\in\Z$ and $m\ge0$ we have
\be\label{transf1} D^m\bigl(\bigl(f\sl k\g\bigr)(\t)\bigr) \= 
\sum_{r=0}^m\binom mr \,(k+r)_{m-r}\,\Bigl(-\frac 1{2\pi i}\,\frac c{c\t+d}\Bigr)^{m-r}\,\bigl((D^rf)\sl{k+2r}\g\bigr)(\t) \;. \ee
\end{lemma}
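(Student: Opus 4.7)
The plan is to prove~\eqref{transf1} by induction on~$m$, with the single non-trivial auxiliary identity being that the function $X(\tau) := -\frac{1}{2\pi i}\cdot\frac{c}{c\tau+d}$ appearing everywhere in the formula satisfies $DX = X^2$. This follows immediately from $\frac{d}{d\tau}(c\tau+d)\i = -c(c\tau+d)^{-2}$ together with the factor $1/(2\pi i)$ in the definition of~$D$.

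For the base case $m=1$, I would differentiate $(f\sl k\g)(\tau) = (c\tau+d)^{-k}f(\g\tau)$ directly by the product rule, using the chain-rule computation $D[f(\g\tau)] = (c\tau+d)^{-2}(Df)(\g\tau)$ (which comes from $d(\g\tau)/d\tau = (c\tau+d)^{-2}$, a consequence of $\det\g=1$). This immediately gives
$$D\bigl((f\sl k\g)(\tau)\bigr) \= k\,X(\tau)\,(f\sl k\g)(\tau) \+ \bigl((Df)\sl{k+2}\g\bigr)(\tau),$$
which is precisely the $m=1$ case of~\eqref{transf1}.

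The inductive step consists in applying $D$ to the right-hand side of~\eqref{transf1} at level~$m$. Each term splits by the product rule into two contributions: when $D$ lands on $X^{m-r}$ it produces $(m-r)X^{m-r+1}$ via $DX=X^2$, and when it lands on $((D^r f)\sl{k+2r}\g)(\tau)$ the $m=1$ case (applied to $D^r f$ in weight $k+2r$) produces $(k+2r)X\cdot((D^r f)\sl{k+2r}\g) + ((D^{r+1}f)\sl{k+2r+2}\g)$. After reindexing $r\mapsto r-1$ in the piece involving $D^{r+1}f$ and factoring out the common Pochhammer $(k+r)_{m-r}$, the coefficient of $X^{m+1-r}((D^r f)\sl{k+2r}\g)$ in $D^{m+1}\bigl((f\sl k\g)(\tau)\bigr)$ is matched to the predicted one by verifying
$$\binom{m}{r}(k+m+r) \+ \binom{m}{r-1}(k+r-1) \= \binom{m+1}{r}(k+m),$$
an identity that reduces (via Pascal's rule $\binom{m+1}{r}=\binom{m}{r}+\binom{m}{r-1}$) to the standard relation $\binom{m}{r}\cdot r = \binom{m}{r-1}\cdot(m-r+1)$.

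I do not expect any real obstacle in this argument: once $DX=X^2$ is noted, the entire proof is mechanical product-rule bookkeeping plus one application of Pascal's rule. The only step requiring minor care is the endpoint cases $r=0$ and $r=m+1$, since at those values only one of the two contributions is present, and one has to check directly that the correct boundary coefficients $(k)_{m+1}$ and $1$ emerge without invoking Pascal.
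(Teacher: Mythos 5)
Your proof is correct and is essentially the paper's own argument: induction on $m$ via the chain rule $D(\g\t)=\tfrac1{2\pi i}(c\t+d)^{-2}$ (equivalently your $DX=X^2$), with the inductive step closed by the combinatorial identity $(k+m+r)\binom mr(k+r)_{m-r}+\binom m{r-1}(k+r-1)_{m-r+1}=\binom{m+1}r(k+r)_{m+1-r}$, which is exactly your identity after multiplying back by the common factor $(k+r)_{m-r}$. No gaps.
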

\begin{proof}
This follows by induction on~$m$, using $D(\g\t)=\frac1{2\pi i}\,\frac1{(c\t+d)^2}$ and the elementary
identity $ (k+m+r)\binom mr(k+r)_{m-r}+\binom m{r-1}(k+r-1)_{m-r+1}=\binom{m+1}r(k+r)_{m+1-r}$.
\end{proof}

\noindent For completeness and for later use, we also give the counterpart to this lemma in the 
other direction (first differentiate, then apply a modular transformation rather than vice versa), namely
\be\label{transf2} \bigl((D^mf)|_{k+2m}\g\bigr)(\t) \= 
   \sum_{r=0}^m\binom mr \,(k+r)_{m-r}\,\Bigl(\frac1{2\pi i}\,\frac c{c\t+d}\Bigr)^{m-r}\,\bigl(D^r(f|_k\g)\bigr)(\t)\;. \ee
The proof of this formula (which is also stated in~\cite{123}, top of page~54, in the special case when $f\in M_k(\G)$)
is similar to that of~\eqref{transf1} and will be omitted here (but indicated briefly in Section~\ref{ERCB}). 

Using the lemma we can easily determine the modular transformation property of the vector of coefficients~$a_r(\t)$
in~\eqref{a-exp} needed to make the operator~$L$ modular.

\begin{theorem}\label{a-transf} Let $a_r(\t)$ $(0\le r\le n)$ be holomorphic functions
in the upper half-plane.  Then the differential operator~$L$ defined by~\eqref{a-exp} is an MLDO of 
type $(k,k+K)$ with respect to a lattice~$\G$ if and only if the $a_r(\t)$ transform by
\be \label{a-transfeq}  \bigl(a_r\sl{K-2r}\g\bigr)(\t)
 \= \sum_{j=0}^{n-r} \binom {r+j}j \,(k+r)_j\,\Bigl(-\frac 1{2\pi i}\,\frac c{c\t+d}\Bigr)^j\,a_{r+j}(\t) \ee
for all $0\le r\le n$, all $\t\in\H$, and all $\g=\sm abcd\in\G$.
\end{theorem}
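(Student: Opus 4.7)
The plan is to expand both sides of the required identity $L(f|_k\gamma) = L(f)|_{k+K}\gamma$ in the same natural basis, namely the $n+1$ functions $\bigl((D^sf)|_{k+2s}\gamma\bigr)(\tau)$ for $s=0,\ldots,n$, and then to argue that these functions, regarded as depending on a varying holomorphic $f$, are ``linearly independent over holomorphic coefficients'' at each fixed $\tau$. Comparing the two expansions term by term will then turn the single modularity condition into the family of scalar identities \eqref{a-transfeq}, one for each index $r$.

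First I would apply Lemma \ref{Transf1} termwise to
$L(f|_k\gamma)(\tau) = \sum_{r=0}^n a_r(\tau)\,D^r\bigl((f|_k\gamma)(\tau)\bigr)$
and swap the order of summation via the substitution $j=r-s$, using $\binom{r}{s}=\binom{r}{r-s}$, to obtain
$$L(f|_k\gamma)(\tau) \= \sum_{s=0}^n \Phi_s(\tau,\gamma)\,\bigl((D^sf)|_{k+2s}\gamma\bigr)(\tau),$$
where $\Phi_s(\tau,\gamma)$ is exactly the right-hand side of \eqref{a-transfeq} with $r$ replaced by $s$. On the other side, because the weight-$(k+K)$ slash operator factorizes on products of functions whose component weights add up to $k+K$, namely $(a_s\cdot D^sf)|_{k+K}\gamma = (a_s|_{K-2s}\gamma)\cdot((D^sf)|_{k+2s}\gamma)$, linearity gives
$$L(f)|_{k+K}\gamma(\tau) \= \sum_{s=0}^n (a_s|_{K-2s}\gamma)(\tau)\,\bigl((D^sf)|_{k+2s}\gamma\bigr)(\tau).$$

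The $(\Leftarrow)$ direction is then immediate: if \eqref{a-transfeq} holds for every $r$, the two sums agree term by term, so $L$ is modular. For $(\Rightarrow)$, I would fix $\tau\in\H$ and set $\tau'=\gamma\tau$; since $\bigl((D^sf)|_{k+2s}\gamma\bigr)(\tau) = (2\pi i)^{-s}(c\tau+d)^{-(k+2s)}f^{(s)}(\tau')$, and since the values $f(\tau'),f'(\tau'),\ldots,f^{(n)}(\tau')$ can be prescribed independently as $f$ ranges over holomorphic functions on $\H$ (for instance by taking $f$ to be any polynomial of degree $n$), the $n+1$ quantities $\bigl((D^sf)|_{k+2s}\gamma\bigr)(\tau)$ can be made to take arbitrary independent values. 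Comparing coefficients therefore forces $\Phi_s(\tau,\gamma)=(a_s|_{K-2s}\gamma)(\tau)$ for every $s$, which is precisely \eqref{a-transfeq}.

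The only real obstacle is bookkeeping: one must keep track of the double sum, perform the index shift $j=r-s$ correctly, and reconcile the binomial coefficients produced by Lemma \ref{Transf1} (which are $\binom{m}{r}$) with the form $\binom{r+j}{j}$ appearing in \eqref{a-transfeq} via $\binom{r+j}{r}=\binom{r+j}{j}$. Once this is set up cleanly, both the computation and the independence argument are mechanical.
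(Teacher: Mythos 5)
Your proof is correct and follows essentially the same route as the paper: expand $L(f|_k\gamma)$ via Lemma~\ref{Transf1} and an index shift, expand $L(f)|_{k+K}\gamma$ using the multiplicativity of the slash operator, and compare the coefficients of $\bigl((D^sf)|_{k+2s}\gamma\bigr)(\tau)$. The only difference is that you spell out the linear-independence justification (prescribing the jet of $f$ at $\gamma\tau$ by a polynomial) that the paper leaves implicit in the phrase ``comparing the coefficients,'' which is a welcome but minor elaboration.
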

\begin{proof}  Multiplying both sides of~\eqref{transf1} by~$a_m(\t)$ and summing over~$m$, we find
$$ L\bigl(f\sl k\g)(\t) \= \sum_{0\le r\le m\le n} a_m(\t) \,
    \binom mr \,(k+r)_{m-r}\,\Bigl(-\frac 1{2\pi i}\,\frac c{c\t+d}\Bigr)^{m-r}\,\bigl((D^rf)\sl{k+2r}\g\bigr)(\t)\;. $$
On the other hand, from the definition of the slash operation it follows that
$$ \bigl(L(f)|_{k+K}\g\bigr)(\t) \= \sum_{r=0}^n \bigl(a_r\sl{K-2r}\g\bigr)(\t)\,\bigl(D^r(f)\sl{k+2r}\g\bigr)(\t)\;. $$
Comparing the coefficients of $D^r(f)|_{k+2r}\g$, we see that~\eqref{MLDEdef} is equivalent to~\eqref{a-transfeq}.
\end{proof}

Theorem~\ref{a-transf} gives a complete answer to the first question posed in the previous section, since
it gives a necessary and sufficient criterion for the coefficients in~\eqref{a-exp} to make $L$ modular,
but it is not a very easy one to check in practice.  We would prefer to replace this description by one
in terms of actual modular forms, in which case no further transformation properties are needed.
To do this, we look at the first few cases of the transformation equation~\eqref{a-transfeq}.  If $r=n$, the top coefficient, it says
\be\label{case0}  (a_n\sl{K-2n}\g)(\t) \= a_n(\t) \ee
for all $\g\in\G$, i.e.,~that $a_n$ is a modular form of weight~$K-2n$, which of course we knew already.  
For the next case $r=n-1$ it says
\be\label{case1}  (a_{n-1}\sl{K-2n+2}\g)(\t) \= a_{n-1}(\t) \m n(k+n-1)\,\frac1{2\pi i}\,\frac c{c\t+d}\,a_n(\t)\,. \ee
But differentiating~\eqref{case0} with respect to~$\t$ gives
\be\label{case0d}   (a'_n\sl{K-2n+2}\g)(\t) \= a'_n(\t) \+ (K-2n)\,\frac1{2\pi i}\,\frac c{c\t+d}\,a_n(\t)\,, \ee
and taking a suitable linear combination of this and~\eqref{case1} we find that the linear combination
\be\label{Badh} h_{n-1}(\t) \: a_{n-1}(\t) \+ \frac{n(k+n-1)}{K-2n}\,a_n'(\t) \ee
transforms like a modular form of weight~$K-2n+2$.  Similarly, for the next case we find that
$$h_{n-2}(\t) \: a_{n-2}(\t) \+ \frac{(n-1)(k+n-2)}{K-2n+2}\,a_{n-1}'(\t) 
  \+ \frac{n(n-1)(k+n-2)(k+n-1)}{2(K-2n+1)(K-2n+2)}\,a_n''(\t) $$
is modular of weight $K-2n+4$, and looking also at the next case we easily guess that the general statement 
should be that the function
\be\label{Defhm} h_m(\t) \: \sum_{s\ge0}\binom{m+s}s\,\frac{(k+m)_s}{(K-2m-s-1)_s}\;a^{(s)}_{m+s}(\t) \ee
(where the sum stops at $s=n-m$ because the later $a_{m+s}$ vanish) is modular of weight $K-2m$ for 
$0\le m\le n$.  Since we know that $K\ge2n$, we see that the denominators in each of
these formulas are non-zero except in the case of~\eqref{Badh} when $K=2n$, in which case $a_n$ is
a constant and the second term becomes 0/0.  We will return to this point in a moment, but for now
will simply assume that the order of our differential operator is strictly less than~$K/2$.

\begin{theorem}\label{a-h} Let $a_r(\t)$ $(0\le r\le n)$ be holomorphic functions
in the upper half-plane.  Then the differential operator~$L$ defined by~\eqref{a-exp}, where $n<K/2$, is an MLDO of 
type $(k,k+K)$ with respect to a lattice~$\G$ if and only if the function $h_m$ defined by~\eqref{Defhm}
belongs to $M_{K-2m}(\G)$ for all $0\le m\le n$. If this is the case, then the expansion of~$L$ as a linear
combination of Rankin-Cohen brackets is given by
\be\label{RCBexplicit}  L(f) \= \sum_{m=0}^n {\binom{K-m-1}m}\i\,\bigl[h_m,f\bigr]_m^{(K-2m,k)}\;. \ee
\end{theorem}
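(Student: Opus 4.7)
The plan is to show that \eqref{RCBexplicit} is a purely formal identity of differential operators, holding for \emph{any} holomorphic $a_r$ with the $h_m$ defined by \eqref{Defhm}, and then to deduce both implications. Once this is established, the ``if'' direction is immediate: each $[h_m,\cdot\,]_m^{(K-2m,k)}$ with $h_m\in M_{K-2m}(\Gamma)$ is manifestly an MLDO of type $(k,k+K)$, so $L$ is a finite sum of MLDOs. For the ``only if'' direction I would argue by induction on the order $n$. From \eqref{Defhm} with $m=n$ one reads off $h_n=a_n$, which is a modular form of weight $K-2n$ by the $r=n$ case of \eqref{a-transfeq}. Then
\[ L' \;:=\; L \m \binom{K-n-1}{n}\i\,[h_n,\cdot\,]_n^{(K-2n,k)} \]
is an MLDO of order $\le n-1$, and by linearity together with the formal identity \eqref{RCBexplicit} applied to the operator $[h_n,\cdot\,]_n^{(K-2n,k)}$ itself, the functions $h'_m$ associated to $L'$ via \eqref{Defhm} agree with the original $h_m$ for $0\le m\le n-1$. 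Applying the inductive hypothesis to $L'$ then yields modularity of every $h_m$.

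To prove the formal identity, I would expand the right-hand side of \eqref{RCBexplicit} via \eqref{rc-bracket} and collect the coefficient of $f^{(r)}$, obtaining
\[ a_r \= \sum_{m=r}^{n}(-1)^{m-r}\,\binom{K-m-1}{m}\i\,\binom{K-m-1}{r}\,\binom{m+k-1}{m-r}\,h_m^{(m-r)}\,. \]
Substituting \eqref{Defhm} for each $h_m$ and comparing coefficients of $a_t^{(t-r)}$ on both sides reduces the identity to the single-variable binomial statement
\[ \sum_{m=r}^{t}(-1)^{m-r}\,\binom{K-m-1}{m}\i\binom{K-m-1}{r}\binom{m+k-1}{m-r}\binom{t}{m}\,\frac{(k+m)_{t-m}}{(K-m-t-1)_{t-m}} \= \delta_{t,r} \]
for every $0\le r\le t\le n$. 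The case $t=r$ is trivial (only the $m=r$ summand survives and equals~$1$), and for $t>r$ one pulls out the $m$-independent factors to rewrite the remaining sum as a terminating ${}_3F_2$ hypergeometric sum with unit argument, which vanishes by a Chu--Vandermonde-type evaluation.

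The main obstacle is this combinatorial identity. A cleaner conceptual route that avoids direct hypergeometric calculation is to observe that both $(a_r)\mapsto(h_m)$ given by \eqref{Defhm} and $(h_m)\mapsto(a_r)$ implicit in \eqref{RCBexplicit} are triangular with unit diagonal between finite-dimensional spaces of equal dimension, so they are mutually inverse as soon as their composition fixes the top index, which follows from $h_n=a_n$; the full statement is then obtained by induction on~$n$. Everything else---the transformation of derivatives under the slash operator via Lemma~\ref{Transf1}, the comparison with the direct condition \eqref{a-transfeq}, and the reduction $L\mapsto L'$---is routine bookkeeping once the formal identity is in hand.
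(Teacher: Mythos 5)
Your architecture is workable and overlaps substantially with the paper's: your ``formal identity'' is exactly the paper's inversion formula~\eqref{inversion} in disguise (the coefficient of $D^r f$ you extract from the Rankin--Cohen expansion agrees term by term with~\eqref{inversion} after the substitution $m=r+j$), and the ``if'' direction is then the same observation in both treatments. Where you genuinely diverge is the ``only if'' direction: the paper differentiates the transformation law~\eqref{a-transfeq} $s$ times and computes $h_m|_{K-2m}\g$ directly, killing the error terms because an $\ell$\thin th difference of a polynomial of degree $\ell-1$ vanishes, whereas you peel off the top Rankin--Cohen bracket and induct on the order. Your route is arguably cleaner, but note that it leans on the formal identity even for this direction (you need the $h$-vector of $\binom{K-n-1}{n}\i[h_n,\cdot\,]_n$ to be $(0,\dots,0,h_n)$, which is the identity again), so \emph{everything} in your proposal funnels through the one combinatorial identity.

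That identity is where the two real problems lie. First, your ``cleaner conceptual route'' for avoiding it is not valid: the composition of two unitriangular maps is always unitriangular, hence always fixes the top index, without being the identity (it can still feed derivatives of $a_n$ into the lower slots), and induction on $n$ does not repair this --- knowing the composition is the identity on tuples with $a_n=0$ and that it fixes $a_n$ says nothing about the cross terms. So the identity cannot be bypassed and must actually be proved. Second, your description of the direct proof is close but incomplete: the sum is indeed a terminating hypergeometric series at unit argument, but it is a ${}_3F_2$ that is \emph{not} Saalsch\"utzian, so no single Chu--Vandermonde or Pfaff--Saalsch\"utz evaluation applies; one must first exploit the fact that an upper and a lower parameter differ by~$1$ to split it into two ${}_2F_1$'s, each evaluated by Vandermonde. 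This is precisely what the paper's reorganization accomplishes: with $X=K-2r-1$ the inner sum becomes $\sum_{j}(X-2j)\binom Xj\binom{2m-X}{m-j}=X\bigl[\binom{2m}m-2\binom{2m-1}{m-1}\bigr]=0$ for $m\ge1$. You should either carry out the analogous splitting for your form of the sum or simply verify~\eqref{inversion} as the paper does and observe that it coincides with your coefficient extraction.
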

\begin{proof} 
If $L$ is modular, then by induction on~$s$, starting with Theorem~\ref{a-transf} for~$s=0$, we find
$$ a_r^{(s)}\sl{K-2r+2s}\g \= \sum_{\substack{j\ge0\\ 0\le t\le s\substack}} \binom {r+j}j\binom st 
\,(k+r)_j\,(j-K+2r-s+1)_t\,U^{j+t}\,a_{r+j}^{(s-t)}(\t)  $$
for all $s\ge0$, where we have set $U=U_\g(\t)=-\frac 1{2\pi i}\frac c{c\t+d}$ and omitted the variable $\tau$ for
simplicity.  Inserting this into~\eqref{Defhm} gives
$$ h_m\sl{K-2m}\g \=  \sum_{\ell,p\ge0} \frac{(m+p+\ell)!}{m!\,p!\,\ell!}\,\frac{(k+m)_{p+\ell}\;U^\ell}{(K-2m-p-1)_p}
\,a^{(p)}_{m+\ell+p}\,\sum_{t=0}^\ell (-1)^t\,\binom\ell t \frac{(K-2m-t-p-\ell)_t}{(K-2m-t-p-1)_t} $$
and this equals $h_m$ because the interior sum vanishes for~$\ell>0$, since it is the $\ell$\thin th difference of a 
polynomial of degree~$\ell-1$.  This proves the first assertion.  For the second, we note that the inversion 
of~\eqref{Defhm}, expressing the $a$'s as linear combinations of the derivatives of the $h$'s, is given by
\be\label{inversion}  a_r \= \sum_{j\ge0} (-1)^j\,\binom{r+j}j\,\frac{(k+r)_j}{(K-2r-2j)_j}\, h_{r+j}^{(j)} \,, \ee
To see this, insert~\eqref{Defhm} into~\eqref{inversion} and set $X=K-2r-1$ and $r+j=m$ to find
$$ \text{RHS of~\eqref{inversion}} \= \sum_{m\ge0}\frac{(r+1)_m\,(k+r)_m}{(X-2m)_{m+1}}\,a_{r+m}^{(m)}\,
   \sum_{j=0}^m\,(X-2j)\,\binom Xj\binom{2m-X}{m-j}\,, $$
which equals $a_r$ because the inner sum is equal to $X\bigl[\binom{2m}m-2\binom{2m-1}{m-1}\bigr]=0$ for $m\ge1$.
Now insert equation~\eqref{inversion} into~\eqref{a-exp} and use the definition of the Rankin-Cohen
brackets to obtain~\eqref{RCBexplicit}.
\end{proof}

We now consider the case $K=2n$ omitted so far, assuming that~$\G=\G_1$.  (General lattices will be
treated in Section~\ref{cocompact}.)  In that case the function $a_n$ is a non-zero constant, which
we can assume without loss of generality to be~1, so that $L=D^n+$(lower order terms) is monic.  Recall that 
the problem here only concerned the function $h_{n-1}$, which cannot be defined by~\eqref{Badh}.  But~\eqref{case1} 
with $a_0=1$ now reduces to $(a_{n-1}\sl2\g)(\t)=a_{n-1}(\t) - \frac{n(k+n-1)}{2\pi i}\,\frac c{c\t+d}$,
and comparing this with the transformation law of the weight~2 quasimodular form $E_2$, which will be recalled
in the next section, we see that $a_{n-1}-\frac{n(k+n-1)}{12}E_2$ is a modular form of weight~2 on~$\G_1$ and 
hence is equal to~0.  A computation similar to that of Theorem~\ref{a-h}, though slightly more complicated, then 
gives the following theorem, which together with Theorem~\ref{a-h} gives the desired explicit version of 
Theorem~\ref{Struct}.  We omit the somewhat messy direct combinatorial proof of this theorem since we will find smoother
approaches in Section~\ref{MLDOandQMF} using quasimodular forms and in Section~\ref{ERCB} using extended Rankin-Cohen brackets.

\begin{theorem}\label{Monic} Let $L$ be a monic MLDO of order~$n$ and type $(k,k+2n)$ on the full modular group.  
Then the function~$h_m$ defined by~\eqref{Defhm} is modular of weight $2n-2m$ for $0\le m\le n-2$ and
\be\label{Monicexplicit}  L(f) \= \K_k^n(f) \+ \sum_{m=0}^{n-2} {\binom{2n-m-1}m}\i\,\bigl[h_m,f\bigr]_m^{(K-2m,k)}\;. \ee
\end{theorem}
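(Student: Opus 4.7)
My plan is to reduce Theorem~\ref{Monic} to the already-established Theorem~\ref{a-h} by subtracting the Kaneko-Koike operator. Set $M := L - \K_k^n$. Both $L$ and $\K_k^n$ are monic MLDOs of type $(k,k+2n)$, and so share the top coefficient $a_n = 1$; moreover, the argument given just before the theorem (equation~\eqref{case1} together with the quasimodular transformation of $E_2$) forces the $D^{n-1}$-coefficient of any monic MLDO of this type to equal $\tfrac{n(k+n-1)}{12}\,E_2$, so $\K_k^n$ and $L$ also share this coefficient. Hence $M$ has $a_n^M = a_{n-1}^M = 0$ and is an MLDO of type $(k,k+2n)$ of order at most $n-2$.

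Since the order of $M$ is strictly less than $n = K/2$, Theorem~\ref{a-h} applies to $M$ directly and furnishes modular forms $h_m^M \in M_{2n-2m}(\G_1)$ for $0 \le m \le n-2$, together with the expansion
$$
M(f) \;=\; \sum_{m=0}^{n-2}\binom{2n-m-1}{m}^{\!-1}\bigl[h_m^M,f\bigr]_m^{(2n-2m,\,k)}\,.
$$
Adding $\K_k^n(f)$ to both sides yields the desired identity~\eqref{Monicexplicit}, provided one checks that the $h_m^M$ agree with the $h_m$ in the theorem -- i.e.\ with \eqref{Defhm} applied to the coefficients of the original $L$ rather than of $M$.

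By linearity of~\eqref{Defhm} in the $a_r$'s and the decomposition $a_r^L = a_r^M + a_r^{\K_k^n}$, this identification is equivalent to the vanishing $h_m^{\K_k^n} = 0$ for $0 \le m \le n-2$. The coefficients of $\K_k^n$ can be read off directly from~\eqref{KKdef} and~\eqref{rc-bracket}: the top coefficient is $1$, while each lower one (for $0 \le r \le n-1$) is an explicit scalar times the single derivative $E_2^{(n-1-r)}$. Substituting into~\eqref{Defhm} and collecting terms, the $s = n-m$ summand vanishes (because $D^{n-m}$ annihilates the constant $1 = a_n^{\K_k^n}$) and all remaining summands are proportional to the single function $E_2^{(n-1-m)}$; consequently $h_m^{\K_k^n} = 0$ reduces to an explicit binomial identity of terminating $_3F_2$-type.

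The main obstacle is precisely this last identity: it is in principle evaluable by Chu-Vandermonde or by induction on $n-m$, but it is the ``messy direct combinatorial'' step the authors explicitly choose to avoid. Their smoother alternatives (Sections~\ref{MLDOandQMF} and~\ref{ERCB}) reinterpret $\K_k^n$ as an extended Rankin-Cohen bracket with the weight-zero function~$1$, at which point the Kaneko-Koike term and the remaining brackets merge into a single formula of the same shape as~\eqref{RCBexplicit} and no separate combinatorial identity is required.
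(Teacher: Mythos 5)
Your reduction is sound as far as it goes, and it is essentially the ``computation similar to that of Theorem~\ref{a-h}, though slightly more complicated'' that the paper alludes to and then declines to write out (the paper obtains Theorem~\ref{Monic} only later, from the smoother machinery of quasimodular forms and extended Rankin--Cohen brackets). In particular, $M=L-\K_k^n$ is indeed an MLDO of type $(k,k+2n)$ of order $\le n-2$: both operators are monic, and the argument via~\eqref{case1} pins down the $D^{n-1}$-coefficient of any monic MLDO of this type uniquely (it is $-\tfrac{n(k+n-1)}{12}E_2$ rather than $+\tfrac{n(k+n-1)}{12}E_2$, as one sees already for $n=1$ where $\d_k=D-\tfrac k{12}E_2$; but only the uniqueness, not the sign, enters your argument). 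Theorem~\ref{a-h} then applies to $M$. The genuine gap is exactly where you locate it: the identification $h_m^{L}=h_m^{M}$, equivalently the vanishing of $h_m^{\K_k^n}$ for $0\le m\le n-2$, is reduced to a terminating binomial identity that you do not prove. Since that identity \emph{is} the combinatorial content of the theorem, the proposal as written is not a complete proof.

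The gap can, however, be closed inside your own framework without evaluating the sum. First note that the modularity computation in the first half of the proof of Theorem~\ref{a-h} applies verbatim to $L$ itself for every $m\le n-2$: the only role of the hypothesis $n<K/2$ there is to keep the Pochhammer denominators $(K-2m-s-1)_s$ in~\eqref{Defhm} away from zero, and for $K=2n$, $m\le n-2$ and $s\le n-m$ the smallest factor occurring is $n-m-1\ge1$. Hence $h_m^{L}$ is modular of weight $2n-2m$ directly, which is the first assertion of the theorem and does not require the subtraction trick. Now $h_m^{\K_k^n}=h_m^{L}-h_m^{M}$ is a difference of two modular forms of weight $2n-2m$, hence modular; on the other hand, as you computed, it is an explicit scalar multiple of $D^{n-1-m}(E_2)$, which is a nonzero quasimodular form of depth $n-m\ge2$ and therefore not modular. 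The scalar must vanish, your $_3F_2$-type identity drops out as a by-product, and~\eqref{Monicexplicit} holds with the $h_m$ of the statement. With this one observation added, your argument is complete and constitutes the ``messy direct'' proof the paper omits, as an alternative to its deferred derivations via the isomorphism of Theorem~\ref{QMFtoMLDO} and the extended brackets of Theorem~\ref{uniformMLDO}.
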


\section{Quasimodular forms and modular linear differential operators} \label{MLDOandQMF}

In Section~\ref{structure} we gave four different descriptions of MLDOs in terms of modular forms,
using Rankin-Cohen brackets and higher-order Serre derivatives, the final result in each case
being an isomorphism of filtered vector spaces as in~\eqref{ModularSum}. 
However, there is another space, much more familiar than the space of MLDOs, that is also canonically isomorphic
(this time in two rather than four different canonical ways) to the same direct sum of spaces of modular forms,
 namely the space of all {\it quasimodular} forms of weight~$K$ and depth~$\le n$.  In this section we show that there is
a direct and extremely simple correspondence between quasimodular forms and MLDOs for any lattice, and that combining 
this correspondence with the two isomorphisms between quasimodular forms and vectors of ordinary modular forms 
gives two of the four descriptions of MLDOs given in Section~\ref{structure} for~$\G=\G_1$.  We will postpone to
Section~\ref{cocompact} the discussion of the extent to which these latter results hold for other lattices
(roughly speaking, in the cocompact case there is only one natural description of quasimodular forms in terms 
of modular forms and only one of the four isomorphisms in question generalizes, but in the non-cocompact case 
they all go through if one modifies the definitions appropriately), but we will give the definition and properties 
of quasimodular forms for arbitrary~$\G$ already here.

The key structure on quasimodular forms is that they form a module over the Lie algebra~$\lie$.
More explicitly, for any lattice $\G$ there are three natural derivations $D$, $W$, and~$\de$
on the space $\tM_*(\G)$ of quasimodular forms on~$\G$ that satisfy the commutation relations 
\be \label{commutators} [W,D] \= 2D,\qquad [W,\de] \= -2\de, \qquad [\de,D]\= W\;. \ee
Here $D$ is the renormalized differentiation operator $(2\pi i)^{-1} d/d\t$ as above and $W$ is
the weight operator multiplying an element of $\tM_k(\G)$ by~$k$ (so that the first two 
commutation relations in~\eqref{commutators} just say that $D$ and $\de$ increase and decrease 
the weight by~2, respectively).  The definition of the operator $\de$ is more complicated.
For the case of the full modular group it is simply $12\,\p/\p E_2$ if we identify $\tM_*(\G_1)$ with 
$\C[E_2,E_4,E_6]$, the commutation relations~\eqref{commutators} then being direct consequences of the 
Ramanujan formulas~\eqref{RamDer}. To define it for general lattices, we must first review the definition 
and basic properties of quasimodular forms.  We refer to~\S5.3 of~\cite{123} (where $W$ is denoted~$E$)
for more details and for all proofs omitted here, none of which are difficult.

In fact there are two different definitions of quasimodular forms.  The one used in~\cite{KZ}, where 
the term was first introduced, was in terms of almost holomorphic modular forms and will be reviewed
in the last section of this paper.  The other, which was suggested subsequently by Werner Nahm, is
more direct and more algebraic and will be discussed here.

The starting point for this definition is the observation that if we differentiate the transformation 
law $f\bigl(\frac{a\t+b}{c\t+d}\bigr)=(c\t+d)^kf(\t)$ of a modular form of weight~$k$ on some 
group~$\G\subset\SL$, we find after multiplying through by $(c\t+d)^2$ that
\be \label{dftransf} f'\Bigl(\frac{a\t+b}{c\t+d}\Bigr) \= (c\t+d)^{k+2}f'(\t) \+ \frac k{2\pi i}\,c(c\t+d)^{k+1}f(\t)\
  \qquad \text{for $\bm abcd\in\G$}\ee
(recall that $Df=(2\pi i)^{-1}\,df/d\t$), which describes precisely the extent to which $f'$ fails to be a 
modular form~of weight~$k+2$ on~$\G$.  (We already used this identity in the previous section when passing 
from equation~\eqref{case0} to equation~\eqref{case0d}.) Similarly, the weight~2 Eisenstein series defined 
in~\eqref{EisDef} (with $C_2=-\frac1{24}$), which is quasimodular of weight~2 and depth~1, satisfies (cf.~\cite{Ser})
\be\label{E2transf}
E_2\Bigl(\frac{a\t+b}{c\t+d}\Bigr) \= (c\t+d)^2E_2(\t)+\frac{6}{\pi i}\,c(c\t+d)\qquad\text{for $\bm abcd\in \MG$}\;. \ee
We can rewrite~\eqref{dftransf} and~\eqref{E2transf} using the slash operator~\eqref{Slash} as
$ \bigl(f'\bigr|_{k+2}\g\bigr)(\t)=f'(\t)+\frac{kf(\t)}{2\pi i}\,\frac c{c\t+d}$ and 
$(E_2\sl2\g)(\t)=E_2(\t)\+\frac6{\pi i}\frac c{c\t+d}$, respectively. These two examples motivate the following
general definition: A {\it quasimodular form} of weight~$K$ on a lattice~$\G\subset\SL$ is a holomorphic function 
$F:\Hh\to\C$ of moderate growth such that the map $\g\mapsto(F|_K\g)(\t)$ from $\G$ to $\C$ is a polynomial 
in $\frac c{c\t+d}$ (where $\g=\sm abcd$) for any fixed value of~$\t\in\Hh$.  If $n$ is the maximum degree 
of these polynomials for all $\t\in\Hh$, we say that $F$ is of {\it depth}~$n$.  Thus a quasimodular form of 
weight~$K$ and depth~$n$ has the transformation behavior 
\be\label{qmftransf} \bigl(F|_K\g\bigr)(\t)\= \sum_{r=0}^n F_r(\t)\,\Bigl(\frac1{2\pi i}\,\frac c{c\t+d}\Bigr)^r \ee
for all $\g=\sm abcd\in\G$, where $F_0,\dots,F_n$ are holomorphic functions (of moderate growth, as usual)
in the upper half-plane that are independent of~$\g$.  Taking $\g$ to be the identity, we see that $F_0=F$,
and we can now define $\de$ by setting $\de(F)=F_1$.  It is then not very difficult to prove that $F_r=\de^rF/r!$
for all~$r\ge0$ , so
that~\eqref{qmftransf} takes the form of a ``modified Taylor expansion"
\be\label{qmfTaylor} \bigl(F|_K\g\bigr)(\t)
  \= \sum_{r=0}^\infty \frac{\de^r\!F(\t)}{r!}\,\Bigl(\frac1{2\pi i}\,\frac c{c\t+d}\Bigr)^r\,. \ee
Note that the infinite series always terminates, because $\de^rF$ has weight $K-2r$ and hence vanishes for $2r>K$.
The depth of~$F$ is by definition the largest value of~$r$ for which $\de^r\!F$ is non-zero. 

We can now combine these ideas with the results in the previous section to obtain a description of
all MLDOs on an arbitrary lattice in terms of quasimodular forms.  Specifically, by
comparing~\eqref{qmfTaylor} with equation~\eqref{a-transfeq} (Theorem~\ref{a-transf}), we find immediately
that each coefficient $a_r$ ($0\le r\le n$) of~$L$ in~\eqref{a-exp} is quasimodular of weight~$K-2r$ and
depth~$n-r$, as mentioned earlier, and that the action of powers of~$\de$ on this coefficient is given by
\be\label{actionofd}  \de^ja_r \= (-1)^j\,(r+1)_j\,(k+r)_j\,a_{r+j} \ee
for all~$j\ge0$. These equations are actually overdetermined, since already the case $j=1$ tells us
that $\de a_{r-1}=-r(k+r-1)a_r$ and hence by induction on~$r$ that each $a_r$ has the form $(-1)^r\de^ra_0/r!(k)_r$,
which is what we would get directly from~\eqref{actionofd} with~$r=0$ and which in turn implies~\eqref{actionofd} 
for all $r$ and~$j$.  This establishes the following bijection between MLDOs and quasimodular forms:
\begin{theorem}\label{QMFtoMLDO} Let $F$ be a quasimodular form of weight~$K$ and depth~$n$ on
an arbitrary lattice~$\G$.  Then for every positive integer $k$ the operator 
\be\label{pairing}  L_{F,k} \: \sum_{r=0}^n \frac{(-1)^r}{r!\,(k)_r}\,\de^r(F)\thin D^r\, \ee
is a modular linear differential operator of order~$n$ and type~$(k,k+K)$. For fixed~$k>0$,
the map $F\mapsto L_{F,k}$ gives an isomorphism
\be\label{isomQMtoMLDO} 
 \tM_*(\G)\; \xrightarrow{\sim}\; \MLDO_{k,k+*}(\G)
\ee
of filtered graded $M_*(\G)$-modules, where $\qM_*(\G)$ is filtered by depth and $\MLDO_{k,k+*}(\G)$ by order, 
with the inverse isomorphism mapping the differential operator~\eqref{a-exp} to its ``constant term"~$a_0(\t)$.
\end{theorem}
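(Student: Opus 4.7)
The plan is to pair each MLDO $L$ with its constant term $a_0(\t)$ and use Theorem~\ref{a-transf} together with the Taylor-type expansion~\eqref{qmfTaylor} to establish the bijection. Given $L = \sum_{r=0}^n a_r(\t)\,D^r \in \MLDO_{k,k+K}(\G)$, the $r=0$ case of~\eqref{a-transfeq} reads
\[
\bigl(a_0\sl K\g\bigr)(\t) \= \sum_{j=0}^n (k)_j \,(-U)^j\, a_j(\t), \qquad U \: \frac{1}{2\pi i}\,\frac{c}{c\t+d}.
\]
Comparing this term-by-term with~\eqref{qmfTaylor} applied to a putative quasimodular form of weight~$K$ shows that $F := a_0$ is quasimodular of weight~$K$ and depth~$\le n$ with $\de^j F / j! = (-1)^j (k)_j\, a_j$, hence $a_j = \frac{(-1)^j}{j!\,(k)_j}\,\de^j F$ and $L = L_{F,k}$. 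Thus the constant-term map $L \mapsto a_0$ is a well-defined left inverse to $F \mapsto L_{F,k}$.

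For the opposite direction I would start with an arbitrary quasimodular form $F \in \tM_K^{(\le n)}(\G)$, set $a_r := \frac{(-1)^r}{r!\,(k)_r}\,\de^r F$ (using $(k)_r \ne 0$ because $k>0$), and verify that the collection $(a_0,\ldots,a_n)$ satisfies the full transformation law~\eqref{a-transfeq}, so that Theorem~\ref{a-transf} certifies $L_{F,k}$ as an MLDO of type $(k,k+K)$. For each $r$, applying~\eqref{qmfTaylor} to the quasimodular form $\de^r F$ of weight~$K-2r$ gives
\[
\bigl(\de^r F\sl{K-2r}\g\bigr)(\t) \= \sum_{s\ge 0} \frac{\de^{r+s}F(\t)}{s!}\,U^s.
\]
Substituting $\de^{r+s}F = (-1)^{r+s}\,(r+s)!\,(k)_{r+s}\,a_{r+s}$ and using the elementary Pochhammer identity $(k)_{r+s} = (k)_r\,(k+r)_s$ then matches the right-hand side of~\eqref{a-transfeq} after clearing the prefactor $\frac{(-1)^r}{r!\,(k)_r}$. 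This bookkeeping with Pochhammer symbols is the main technical step, but it is purely combinatorial.

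Finally, the filtered graded $M_*(\G)$-module statement falls out formally. The map respects weights by construction; a form~$F$ of depth exactly~$n$ is sent to an operator of order exactly~$n$, since the leading coefficient $\frac{(-1)^n}{n!\,(k)_n}\,\de^n F$ is nonzero precisely when $\de^n F \ne 0$; and $M_*(\G)$-linearity follows because $\de g = 0$ for any $g \in M_\ell(\G)$, giving $\de^r(gF) = g\,\de^r F$ and hence $L_{gF,k} = g\, L_{F,k}$. The main conceptual obstacle is recognizing the structural parallel between~\eqref{a-transfeq} and~\eqref{qmfTaylor}; once this is seen, the proof reduces to matching Pochhammer factors.
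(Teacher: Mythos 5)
Your proposal is correct and follows essentially the same route as the paper: the paper also proves the theorem by comparing the quasimodular Taylor expansion~\eqref{qmfTaylor} with the transformation law~\eqref{a-transfeq} of Theorem~\ref{a-transf}, deducing $\de^j a_r = (-1)^j(r+1)_j(k+r)_j\,a_{r+j}$ and hence $a_r=(-1)^r\de^r a_0/r!\,(k)_r$. Your explicit Pochhammer bookkeeping for the converse direction and the $M_*(\G)$-linearity via $\de^r(gF)=g\,\de^r F$ match the paper's argument.
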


\noindent As a simple example of the first statement of the theorem, if we take for $F$ the quasimodular
form $(-\frac1{12}E_2)^n$ of weight~$2n$ and depth~$n$, then $L_{F,k}(f)$ equals $\d^{[n]}(f)/(k)_n$ with 
$\d^{[n]}(f)$ as in equation~\eqref{defVZexpl}.  We should also remark that the weaker statement that 
$L_{F,k}(f)$ is a modular form of weight~$k+K$ for any $F\in\tM_K(\G)$ and $f\in M_k(\G)$ can be seen 
without the rather complicated calculations of Section~\ref{coeffs} simply from the $\lie$-commutation 
relations~\eqref{commutators}, since if $f$ is a modular form of weight~$k$ then it satisfies
$W(f)=kf$ and $\de(f)=0$ and then by a standard and easy induction also $\de(D^rf)=r(k+r-1)D^{r-1}f$ 
(cf.~equation~\eqref{GSM25} below) and therefore $\de(L_{F,k}(f))=0$. The last statement of the theorem,
which asserts that $L_{Fh,k}=hL_{f,k}$ for any modular form~$h$, is true because~$\de^r(Fh)=\de^r(F)\thin h$ since
$\de$ is a derivation and annihilates~$h$.

Theorem~\ref{QMFtoMLDO} makes sense and remains true if $k$ is negative and non-integral, but if~$k$ is~0 
or a negative integer then the right-hand side becomes meaningless.  We will see one way to remedy this
in the case~$k=0$ in Section~\ref{ERCB2} by identifying $L_{F,k}(f)$ for~$f\in M_k$ with the
value $\{f,F\}$ of a pairing between modular and non-modular forms that makes sense also when~$k=0$ and~$f=1$.
However, to understand MLDOs we do not merely want their action on modular forms, but on arbitrary differentiable
functions, and here this is quite easy to do just by rescaling.  For instance, if we multiply~\eqref{pairing}
by~$k$ and set $k=0$, we get the operator $\sum_{r\ge1}\frac{(-1)^r}{r!(r-1)!}\de^r(F)\thin D^r$, which is indeed
an MLDO of type~$(0,K)$ (equal, for example, to~$D$ if~$F=-E_2/12$, $K=2$).  More generally, if we divide~\eqref{pairing}
by~$(k-1)!$, then this does not change any of the statements of the theorem as long as $k$ is a strictly positive
integer, but gives a new operator $\sum \frac{(-1)^r}{r!(k+r-1)!}\thin \de^r(F)\thin D^r$ that is well-defined
also for $k\in\Z_{\ge0}$ if we interpret the terms having the factorial of a strictly negative integer in their 
denominator as~0.  Then for~$\G$ non-cocompact all of the statements of the theorem except for the statement
about the inverse isomorphism remain true (for instance, for~$\G=\G_1$, $k=0$ and~$K=2$ the spaces
$\MLDO_{0,2}(\G)=\C\cdot D$ and $\qM_2(\G)=\C\cdot E_2$ are isomorphic, but we cannot see this by taking
the ``constant term"), whereas for co-compact lattices (which will be treated in Section~\ref{cocompact}) 
the map~\eqref{pairing} still exists but is no longer always an isomorphism. 

Theorem~\ref{QMFtoMLDO} gives a more conceptual explanation of the expansions discussed in Section~\ref{coeffs}.
To see the relation to the various structure theorems in Section~\ref{structure}, we need first to know 
the structure of~$\tM_*(\G)$.  For the moment we do this only for~$\G=\G_1$; the results for general lattices,
which differ in several respects, will be discussed in Section~\ref{cocompact}.  In this case there are
two quite different descriptions of quasimodular forms in terms of modular forms, as already mentioned at 
the beginning of the section.  One of them, which is an obvious consequence of the expressions for the
rings $M_*=M_*(\G_1)$ and $\tM_*=\tM_*(\G_1)$ as $\C[E_4,E_6]$ and $\C[E_2,E_4,E_6]$, respectively, is
that every quasimodular form on the full modular group can be written uniquely as a polynomial in~$E_2$
with modular coefficients.  This gives a canonical isomorphism 
\be\label{isom1} \bigoplus_{r=0}^nM_{K-2r} \;\;\mybig\simeq\;\;\tM_K^{(\le n)}\,, \qquad 
 (g_0,\dots,g_n) \;\;\longleftrightarrow\;\; F\,=\,\sum_{j=0}^ng_j\,E_2^j\,. \ee
The second description, which is equally easy to establish (see Proposition~\ref{QMF.Add} for a
proof in a slightly more general situation), is that every quasimodular form of
positive weight can be expressed uniquely as a linear combination of derivatives of modular forms 
and of~$E_2$.  This means that if we define spaces $M_k^+=M_k^+(\G_1)$ by $M_0^+=\{0\}$, $M_2^+=\C\,E_2$, 
and $M_k^+=M_k$ for $k>2$, then for every $K>0$ we have a second canonical isomorphism
\be\label{isom2} \bigoplus_{r=0}^nM^+_{K-2r} \;\;\mybig\simeq\;\;\tM_K^{(\le n)}\,, \qquad 
(h_0,\dots,h_n) \;\;\longleftrightarrow\;\; F\,=\, \sum_{j=0}^n D^j(h_j) \ee
between $\tM_K^{(\le n)}$ and a space of of the same dimension as before, but in which the 1-dimensional 
space $\C\cdot1$ has been replaced by the 1-dimensional space $\C\cdot E_2$.  (We will see a natural 
interpretation of this in Section~\ref{ERCB2} when we interpret the quasimodular form $E_2/12$ as a modified 
derivative of the modular form~1.)  

If we now combine the two isomorphisms~\eqref{isom1} and~\eqref{isom2} with the isomorphism given in
Theorem~\ref{QMFtoMLDO} between quasimodular forms and MLDOs, we get two of the four descriptions of MLDOs
given in Theorems~\ref{Struct} and~\ref{bcd-Struct} of Section~\ref{structure}: if $F$ has the form given 
in~\eqref{isom1}, then the associated MLDO $L_{F,k}$ is given by the third isomorphism in~\eqref{bcd-exp} 
with $d_r=12^rg_r/(k)_r$, while  if $F$ has the form given in~\eqref{isom2}, then $L_{F,k}(f)$ 
is a linear combination of the Rankin-Cohen brackets $[h_j,f]_j^{(K-2j,k)}$ for $j<(K-2)/2$ and 
$\K_k^j(f)$ for $j=(K-2)/2$, with easily determined coefficients. (See Section~\ref{ERCB2}.)

\section{Cohen-Kuznetsov series, Rankin-Cohen brackets, and higher Serre derivatives}  \label{ERCB}

The best way to understand the relation between the derivatives and the modular transformation properties
of functions in the upper half-plane is through the generating function of their derivatives.  The right 
generating series to use, as was discovered independently by Henri Cohen~\cite{HC} and Nikolai Kuznetsov~\cite{Kuz}
in~1975, is defined by the formula 
\be\label{DefCK}
\Phi_{f,k}(\t,X)  \= \sum_{n=0}^\infty \frac{D^n(f)}{n!\,(k)_n}\,X^n
\ee
for any holomorphic function $f$ in $\H$ and $k\in\Z$ and satisfies the transformation property
\be\label{ModCK}
\Phi_{f,k}\Bigl(\frac{a\t+b}{c\t+d},\frac X{(c\t+d)^2}\Bigr) \= 
(c\t+d)^k\,\exp\Bigl(\frac c{c\t+d}\,\frac X{2\pi i}\Bigr)\,\Phi_{f|_k\g,k}(\t,X)
\ee
for any $\g=\sm abcd\in\SL$.  This is an easy consequence of the transformation formula~\eqref{transf1}
(Lemma~\ref{Transf1}) and is in turn equivalent to the inverse transformation formula~\eqref{transf2}.  
If $f\in M_k(\G)$ for some lattice $\G\subset\SL$, then~\eqref{ModCK} says that the function $\Phi_{f,k}(\t,X)$
is invariant up to a simple automorphy factor under the action 
$(\t,X)\mapsto\bigl(\frac{a\t+b}{c\t+d},\frac X{(c\t+d)^2}\bigr)$ of~$\G$.  In this case we usually
omit the weight, writing simply $\Phi_f(\t,X)$ for~$\Phi_{f,k}(\t,X)$.

The Cohen-Kuznetsov series are related to the Rankin-Cohen brackets defined in~\eqref{rc-bracket} by
\be\label{CKandRCB} \sum_{n=0}^\infty \frac{[f,g]_{\vphantom,_n}^{(k,\ell)}(\t)}{(k)_n\,(\ell)_n}\,X^n 
  \= \Phi_{f,k}(\t,-X)\,\Phi_{g,\ell}(\t,X)\,, \ee
so that the transformation property~\eqref{ModCK} immediately implies\footnote{at least if $k$ 
and~$\ell$ are positive.  If we allow the modular forms $f$ and $g$ to be meromorphic, then their 
weights can be negative; in this case Lemma~\ref{Transf1} still implies that $[f,g]_n^{(k,\ell)}$ 
is modular of weight~$k+\ell+2n$, but this cannot be proved using~\eqref{CKandRCB} since the 
definition of $\Phi_{f,k}$ does not make sense when $k$ is a non-positive integer.} the transformation property
$$ \bigl[f|_k\g,\,g|_\ell\g\bigr]^{(k,\,\ell)}_n 
  \= \bigl[f, g\bigr]^{(k,\,\ell)}_n\,\bigl|_{k+\ell+2n}\g \qquad(\g\in\SL) $$
of Rankin-Cohen brackets and hence that $[M_k(\G),M_\ell(\G)]_n^{(k,\ell)}\subseteq M_{k+\ell+2n}(\G)$.
But they are also the key to understanding the higher Serre derivatives~$\d_k^{[n]}(f)$ and $\K_k^n(f)$
introduced in Section~\ref{RCBandHSD}.  

For this purpose, we introduce the three modified Cohen-Kuznetsov series 
$$ \Phi^{(\Dh)}_{f,k}(\t,X) \= \sum_{n=0}^\infty \frac{\Dh^n(f)}{n!\,(k)_n}\,X^n, \quad
\Phi^{(\d)}_{f,k}(\t,X) \= \sum_{n=0}^\infty \frac{\d_k^{[n]}(f)}{n!\,(k)_n}\,X^n, \quad
\Phi^{(\K)}_{f,k}(\t,X) \= \sum_{n=0}^\infty \frac{\K_k^n(f)}{n!\,(k)_n}\,X^n $$
obtained by replacing $D^n(f)$ in~\eqref{DefCK} by $\Dh_k^{[n]}(f)$, $\d_k^n(f)$ or $\K_k^n(f)$, respectively,
where again the index~$k$ will usually be omitted if $f$ is modular of weight~$k$. 
Here $\Dh_k$ is the non-holomorphic derivative $\Dh_kf(\t)=Df(\t)-\frac k{4\pi y}f(\t)$
($y:=\Im(\t)$) and $\Dh_k^n$ denotes the composition $\Dh_{k+2n-2}\cdots\Dh_{k+2}\Dh_k$.  It is well known, and
easily checked, that $\Dh_k(f|_k\g)=\Dh_k(f)|_{k+2}(f)$, so that $\Dh_k^n$ preserves modularity (but not holomorphy).  
The first two of these modified series were studied in~\cite{123}, where it was shown 
(pp.~54--55) that they are related to the original Cohen-Kuznetsov series $\Phi_{f,k}=\Phi_{f,k}^{(D)}$ by
\be\label{CKrelations}
\Phi^{(D)}_f(\t,X) \= e^{X/4\pi y}\,\Phi^{(\Dh)}_f(\t,X) \= e^{XE_2(\t)/12}\,\Phi^{(\d)}_f(\t,X)\,, 
\ee
where $E_2$ is the non-modular Eisenstein series of weight~2 defined in Section~\ref{basics}.
(We should mention here that our notations differ in several respects from those of~\cite{123}: the derivations 
denoted here by $W$, $\sd=\sd_k$ and $\Dh$ are denoted there by $E$, $\vth=\vth_k$ and~$\p=\p_k$,
or inadvertently on p.~60 by $\vth=\vth_k$, 
the Cohen-Kuznetsov series denoted here by $\Phi_f$, $\Phi_f^{(\Dh)}$ and $\Phi_f^{(\d)}$ 
are denoted in~\cite{123} by $\widetilde f_D$, $\widetilde f_\p$ and $\widetilde f_\d$, 
and the argument~$\t\in\H$ is denoted there by~$z$.)  The second two of these relations give
\be\label{CKrelations2}
 \Phi^{(\d)}(\t,X) \= e^{-X\Eh(\t)/12}\,\Phi^{(\Dh)}(\t,X) \,, 
\ee
where $\Eh(\t)$ denotes the non-holomorphic ``completion" $E_2(\t)-\frac3{\pi y}$ of~$E_2(\t)$,
which transforms like a modular form of weight~2 because of the transformation equation~\eqref{E2transf},
so that the fact that $\d_k^{[n]}$ maps modular forms of weight~$k$ on~$\G_1$ to modular forms 
of weight~$k+2n$ is a direct consequence of the fact that $\Dh_k^n$ preserves modularity.  The 
equality between the first and third terms of~\eqref{CKrelations}, on the other hand, immediately
gives equation~\eqref{defVZexpl}, the first part of Theorem~\ref{VZtoKK}.  

For the proof of the second part of Theorem~\ref{VZtoKK} concerning the Kaneko-Koike derivatives, we 
introduce a Cohen-Kuznetsov series $\Phi_1=\Phi_{1,0}$ for the constant function $1\in M_0(\G_1)$.  
The original definition does not make any sense in this case, because all but the first terms of~\eqref{DefCK} 
have vanishing numerator and denominator.  The correct definition is given by the following proposition,
which was also found by Dai (\!\!\cite{Dai}, eq.~(6.7)).
\begin{proposition} \label{CK1}
The Cohen-Kuznetsov series $\Phi_1=\Phi_{1,0}$ defined by
\be\label{defCK1}  
 \Phi_{1,0}(\t,X) \= 1 \+ \frac X{12}\,\Phi_{E_2,2}(\t,X) 
 \= 1 \+ \frac1{12}\,\sum_{n=1}^\infty \frac{D^{n-1}E_2(\t)}{n!\,(n-1)!}\,X^n \ee
transforms under the action of $\sm abcd\in\G_1$ by
\be\label{CK1transf} \Phi_1\Bigl(\frac{a\t+b}{c\t+d},\,\frac X{(c\t+d)^2}\Bigr) \= 
    \exp\Bigl(\frac c{c\t+d}\,\frac X{2\pi i}\Bigr)\,\Phi_1(\t,X)\,.  \ee
The modified Cohen-Kuznetsov series $\Phi^{[\d]}_1=\Phi^{[\d]}_{1,0}$ defined by
\be\label{defCKVZ1}  
 \Phi^{[\d]}_1(\t,X) \= e^{-XE_2(\t)/12}\,\Phi_1(\t,X)
\ee
is invariant under $(\t,X)\mapsto\bigl(\frac{a\t+b}{c\t+d},\frac X{(c\t+d)^2}\bigr)$.
\end{proposition}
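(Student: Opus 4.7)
My plan has three parts, corresponding to the three assertions of the proposition. First I would verify the equality of the two expressions for $\Phi_1$ in \eqref{defCK1} by a direct reindexing: expanding $\Phi_{E_2,2}$ using the definition \eqref{DefCK} (and noting that $(2)_n = (n+1)!$) and shifting the summation index from $n$ to $n-1$. This is purely mechanical.

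For the transformation law \eqref{CK1transf}, my strategy is to apply the standard Cohen-Kuznetsov transformation \eqref{ModCK} to $\Phi_{E_2,2}(\t,X)$ in weight~$2$ and then carefully account for the non-modularity of $E_2$. By \eqref{E2transf} we have $E_2|_2\g = E_2 + 12u$, where $u(\t) := \frac{1}{2\pi i}\frac{c}{c\t+d}$. Linearity of $\Phi_{\cdot,2}$ in its first argument then gives $\Phi_{E_2|_2\g,2} = \Phi_{E_2,2} + 12\,\Phi_{u,2}$, so I would next evaluate $\Phi_{u,2}$ in closed form: the identity $Du = -u^2$ yields $D^n u = (-1)^n n!\,u^{n+1}$ by induction, and the resulting series collapses to $\Phi_{u,2}(\t,X) = (1 - e^{-uX})/X$. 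Substituting these pieces back into $\Phi_1(\g\t, X/(c\t+d)^2)$ via $\Phi_1 = 1 + \frac{X}{12}\Phi_{E_2,2}$, the extra contribution from $\Phi_{u,2}$ produces a term $e^{uX} - 1$ that combines with the leading constant~$1$ so that the right-hand side collapses exactly to $e^{uX}\Phi_1(\t,X)$, which is \eqref{CK1transf}.

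For the invariance of $\Phi^{[\d]}_1$ I would use \eqref{CK1transf} together with the reformulation $E_2(\g\t)/(c\t+d)^2 = E_2(\t) + 12u$ of \eqref{E2transf}: the prefactor $e^{-XE_2(\t)/12}$ appearing in \eqref{defCKVZ1}, evaluated at $(\g\t, X/(c\t+d)^2)$, picks up an extra factor $e^{-uX}$ that exactly cancels the automorphy factor $e^{uX}$ in \eqref{CK1transf}, leaving $\Phi^{[\d]}_1(\t,X)$ unchanged.

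The main obstacle, or rather the key insight, is the closed-form evaluation $\Phi_{u,2}(\t,X) = (1 - e^{-uX})/X$ for the generating series of a function $u$ that is not itself a quasimodular form. Once this identity is established, the remainder is algebraic bookkeeping, and the only subsidiary point is that one must allow $\Phi_{f,k}$ as a formal construction for arbitrary holomorphic $f$ (not just modular forms) in order to make the linearity step legitimate; this is harmless since only the formal definition \eqref{DefCK} is being used.
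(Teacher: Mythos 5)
Your proposal is correct and follows essentially the same route as the paper's first suggested argument, which deduces \eqref{CK1transf} from the weight-$2$ Cohen--Kuznetsov transformation applied to $E_2$ together with \eqref{E2transf}; you have merely made explicit the closed-form evaluation $\Phi_{u,2}(\t,X)=(1-e^{-uX})/X$ (via $D^nu=(-1)^n n!\,u^{n+1}$) that the paper leaves implicit. The remaining steps, including the cancellation of the automorphy factor in the proof of the invariance of $\Phi^{[\d]}_1$, match the paper's reasoning.
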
 
\begin{proof}
We can prove this by imitating any of the three proofs given in~\cite{123} (p.~54) for equation~\eqref{ModCK}
when the weight~$k$ is positive.  One of them is the one we gave above using equation~\eqref{transf2}, and 
we can apply this also in weight~0 by using equation~\eqref{transf2} for $f=E_2$ and $k=2$ together with the 
transformation formula~\eqref{E2transf} to deduce that $\Phi_{E_2,2}$ has precisely the transformation properties 
under~$\G_1$ needed to imply equation~\eqref{CK1transf} for $\Phi_{1,0}$.
Another proof in~\cite{123} used the fact that $\Phi_f(\t,X)$ for $k>0$ is the unique solution of the 
differential equation $\bigl(X\frac{\p^2}{\p X^2}+k\frac\p{\p X}-D)\Phi_f=0$ with constant term~$f$ and 
that the function $(c\t+d)^{-k}e^{-cX/2\pi i(c\t+d)}\Phi_f\bigl(\frac{a\t+b}{c\t+d},\frac X{(c\t+d)^2}\bigr)$ 
has the same properties. For~$f=1$ and $k=0$ the first statement is no longer true, but all solutions of 
the differential equation $\bigl(X\frac{\p^2}{\p X^2}-D)\Phi_f=0$ with constant term~1
have the form $1+\sum_{n\ge1}\frac{D^{n-1}(F)}{n!(n-1)!}X^{n-1}$ for some function $F$, which is
arbitrary, and therefore for any~$F$ there is a unique solution beginning $1+FX+\text O(X^2)$.
This function therefore satisfies the transformation equation~\eqref{CK1transf} if (and only if)
$F$ satisfies $F|_2\g=F+\frac1{2\pi i}\frac c{c\t+d}$, which is the case for $F=\frac1{12}E_2$
by virtue of~\eqref{E2transf} (and for no other holomorphic function~$F$ of bounded growth at
infinity because there are no non-zero modular forms of weight~2 on~$\G_1$). The second statement
of the proposition follows immediately from the first and the transformation property~\eqref{E2transf} 
of~$E_2(\t)$. We observe that the definition~\eqref{defCKVZ1} is just the relation given
in~\eqref{CKrelations} between $\Phi^{[\d]}_f$ and $\Phi_f$ for modular forms~$f$ of positive weight,
but now decreed to hold also for~$f=1$.
\end{proof}

By combining the definition~\eqref{KKdef} of the Kaneko-Koike operator with equations~\eqref{CKandRCB}
and~\eqref{defCK1}, we can rewrite that definition in terms of generating series as
\be\label{CKrelations3} 
 \Phi_{f,k}^{(\K)}(\t,X) \= \Phi_{f,k}^{(D)}(\t,X)\,\Phi_{1,0}(\t,-X)\,.\ee
Comparing this with equation~\eqref{CKandRCB}, we see that the Kaneko-Koike derivatives $\K^n(f)$ can be
interpreted in some sense simply as the Rankin-Cohen brackets of $f$ with the constant function~1.  (We will 
make more precise sense of this statement in the next section.)  In any case, equation~\eqref{CKrelations3} 
together with the transformation property~\eqref{CK1transf} of $\Phi_1$ immediately implies the fact
that $\K_k^n$ is an MLDO of type $(k,k+2n)$ and hence sends modular forms of weight~$k$ to modular 
forms of weight~$k+2n$.  

Equation~\eqref{CKrelations3} tells us that the fourth of our modified Cohen-Kuznetsov series $\Phi_{f,k}^{(\K)}$
is related to the other three by universal factors, not depending on the form~$f$ or its weight~$k$, but 
no longer purely exponential in~$X$ as was the case for the relationships among the other three series.
We can combine it with the previous statements~\eqref{CKrelations} and~\eqref{CKrelations2} into a single diagram
\be\label{Diagram}
\begin{tikzcd}[cells={nodes={draw},outer sep=5pt}]
  & \Phi_f^{(\Dh)}(\t,X)
  \arrow[dr,"\displaystyle\cdot\;e^{-X\Eh(\t)/12}"] & \\[2mm]
  \Phi_f^{(D)}(\t,X)
  \arrow[ur, "\displaystyle\cdot\;e^{X/4\pi y}" anchor=south east]
  \arrow[rr,"\displaystyle\cdot\;e^{-XE_2(\t)/12}"]
 \arrow[dr,"\displaystyle\cdot\;{\Phi_1^{(D)}(\t,-X)}" anchor=north east] & 
  & \Phi_f^{(\d)}(\t,X)
  \arrow[dl,"\displaystyle\cdot\;{\Phi_1^{(\d)}(\t,-X)}"] \\[2mm]
  & \Phi_f^{(\K)}(\t,X) & 
\end{tikzcd}
\ee
showing the relationships between all four series.  The last arrow of this diagram also proves
equation~\eqref{KKfromVZ} of Theorem~\ref{VZtoKK}, with the modular forms $\om_m$ defined by the generating function
\be\label{omDef} 
 \sum_{m=0}^\infty \frac{\om_m(\t)}{{m!}^{\,2}}\,(-X)^m \= \Phi^{(\d)}_1(\t,X)\,. 
\ee
The first terms of this series are then easily calculated to be the ones tabulated in Theorem~\ref{VZtoKK}.

\section{Modified derivatives and extended Rankin-Cohen brackets}  \label{ERCB2}

In this section we introduce a new and very convenient notion that will make many
statements of the theory simpler and more uniform, namely, modified higher derivatives 
$\DD n(f)$ that are simply a renormalization of the usual derivatives $D^n(f)$ when $f$ has positive
weght but which are non-trivial also for~$f=1$.  

Specifically, if $f$ is a modular form of positive weight we define
\be\label{DDnPosWeight}
 \DD n(f) \= \DD n_k(f) \= \frac{D^n(f)}{(k)_n} \qquad(f\in M_k(\G),\;k>0)\,,
\ee
where $(k)_n=k(k+1)\cdots(k+n-1)$ as before.  This renormalization seems unnecessary and somewhat 
strange at first sight but turns out to be very natural.  In particular, in terms of this notation 
the Cohen-Kuznetsov series~\eqref{DefCK} can be written very simply as
\be\label{CKviaDD}
 \Phi_f(\t,X) \= \sum_{n=0}^\infty \frac{\DD n(f)}{n!}\,X^n
\ee
or symbolically as
\be\label{CKsymbolic}
 \Phi_f(\t,X) \= e\up{XD}f(\t)\,.
\ee
The real point, however, is that we can now define higher derivatives of the 
constant function~1 in a useful way.  The definition~\eqref{DDnPosWeight} makes
no sense in this case for $n>0$ (for $n=0$ it of course just gives $D\up0(1)=1$),
because both the numerator and the denominator of the fraction vanish.  But 
if we recall that the Serre derivative $\d_k(f)=D(f)-\frac k{12}E_2f$ of a modular
form of weight~$k$ is modular of weight~$k+2$, and that the function 
$\frac{\d_k(f)}k=\DD 1(f)-\frac{E_2}{12}f$ is therefore also modular of this weight, we see
that $\DD 1(1)$ should be defined as the sum of $\frac1{12}E_2$ and a modular form 
of weight~2, and hence must be taken to be~$\frac1{12}E_2$.  For the higher derivatives,
there is then no problem, since equation~\eqref{DDnPosWeight} in the positive
weight case gives the inductive formula $\DD {n+1}f=D(\DD n(f))/(k+n)$, which
makes sense even for $k=0$ as soon as~$n>0$. We therefore define $\DD n(1)$ for all $n\ge0$ by
\be\label{DDn1}
 \DD n(1) \= D_0\up n(1) \= \begin{cases} \phantom{XX}1 & \text{if $n=0$,} \\
  \frac{D^{n-1}(E_2)}{12\,(n-1)!} & \text{if $n\ge1$,} \end{cases} 
\ee
and discover that formula~\eqref{CKviaDD} for the Cohen-Kuznetsov series of
a modular form of positive weight is still valid for $f=1$ and~$k=0$ if we use the definition 
of $\Phi_1$ given in Proposition~\ref{CK1}. In fact, we could have simply used equations~\eqref{CKviaDD} 
and~\eqref{defCK1} as the motivation of the definition~\eqref{DDn1}.  We can also give $\DD n(1)$ by 
the explicit formula
\be\label{Dn1}
  \DD n(1) \= \sum_{m=0}^n(-1)^m\binom nm\,\frac{\om_m}{m!}\,\Bigl(\frac{E_2}{12}\Bigr)^{n-m}
\ee
with $\om_m$ as in Theorem~\ref{VZtoKK}.  (The proof will be given later.)

So far this is purely formal and a matter of introducing new notations for objects that we 
already knew.  But in fact it immediately leads to many simplifications in the formulas 
that we have studied so far, as well as in those given in the rest of this section and 
the two following ones.  The first concerns the Rankin-Cohen bracket. If we rewrite the 
definition~\eqref{rc-bracket} in terms of the modified derivatives~\eqref{DDnPosWeight} 
for $f$ and $g$ of positive weights~$k$ and~$\ell$, we find that  
\be\label{proportional}
\bigl[f,\,g\bigr]^{(k,\,\ell)}_n
  \=\frac{(k)_n\,(\ell)_n}{n!}\,\sum_{i=0}^{n}(-1)^i\binom ni\DD i(f) \,\DD {n-i}(g)\,. 
\ee
This equation still holds if $k$ or~$\ell$ is zero, but is then not interesting since 
both sides vanish for~$n\ge1$, because the factor in front of the sum is then equal to~0.
But the sum itself is in general non-zero and therefore give an interesting modified
Rankin-Cohen bracket even in the case of forms of weight~0.  We therefore define
the~$n$\thin th {\it extended Rankin-Cohen bracket} $\langle f, g\rangle_n$ 
of any two modular forms $f$ and~$g$ and any $n\ge0$ by
\be\label{defERCB}  
\big\langle f,\,g\big\rangle_n \: \sum_{i=0}^{n}(-1)^i\binom ni\DD i(f) \,\DD {n-i}(g)\,.
\ee
We then have
\begin{theorem} \label{prop.ERCB}
If $f$ and~$g$ are modular forms of weight $k$ and~$\ell$, respectively, then the
extended Rankin-Cohen bracket~\eqref{defERCB} is a modular form of weight~$k+\ell+2n$
for all~$n\ge0$, and is given in terms of the usual Rankin-Cohen bracket, the
Kaneko-Koike derivative, and the modular forms $\om_m$ defined in Theorem~\ref{VZtoKK} by 
\be\label{ERCBexplicit}  
 \big\langle f,\,g\big\rangle_n \= \begin{cases} 
    \qquad \quad\, \frac{n!}{(k)_n(\ell)_n}\,[f,g]^{(k,\ell)}_n &\text{if $k,\,\ell>0$,}\\
   \qquad\quad\qquad \frac1{(\ell)_n}\,\K_\ell^n(g)  &\text{if $f=1$, $\ell>0$,}  \\
  \frac1{n!}\,\sum_{m=0}^n(-1)^m\,{\binom nm}^2\,\om_m\,\om_{n-m}  &\text{if $f=g=1$.} \end{cases}
\ee
\end{theorem}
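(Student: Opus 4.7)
The plan is to package everything into one generating-function identity, namely
$$
\Phi_f(\tau,-X)\,\Phi_g(\tau,X)\;=\;\sum_{n=0}^\infty\frac{\langle f,g\rangle_n}{n!}\,X^n,
$$
which follows directly by multiplying the two series and matching with the defining formula~\eqref{defERCB}. Once this is in place, both the modularity statement and the three explicit identifications fall out by comparing this product with the other Cohen-Kuznetsov series introduced in Section~\ref{ERCB}. The single nontrivial point is that the identity, and in particular the transformation law behind it, must remain valid when one or both of $f,g$ is the constant function~$1$, and this is exactly what Proposition~\ref{CK1} was set up to supply.

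For modularity I would argue as follows. When $k,\ell>0$, equation~\eqref{ModCK} applied separately to $\Phi_f$ and $\Phi_g$ gives reciprocal exponential factors $e^{\pm cX/2\pi i(c\tau+d)}$ that cancel in the product, leaving a clean factor of $(c\tau+d)^{k+\ell}$ on the right; reading off the coefficient of $X^n$ proves that $\langle f,g\rangle_n\in M_{k+\ell+2n}(\Gamma)$. When exactly one of $f,g$ equals~$1$, the same cancellation occurs because the transformation law~\eqref{CK1transf} for $\Phi_1$ has the same exponential factor as in the positive-weight case (with $k=0$); and when $f=g=1$ the two exponentials cancel and the product is itself $\Gamma_1$-invariant under $(\tau,X)\mapsto(\gamma\tau,X/(c\tau+d)^2)$, giving weight $2n$.

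For the three explicit formulas: the first case $k,\ell>0$ is a direct rewriting, since~\eqref{proportional} exactly identifies the sum in~\eqref{defERCB} with $\frac{n!}{(k)_n(\ell)_n}\,[f,g]_n^{(k,\ell)}$. For the second case $f=1$, $\ell>0$, I would invoke equation~\eqref{CKrelations3}, which says precisely that $\Phi_1(\tau,-X)\Phi_g(\tau,X)=\Phi_g^{(\Theta)}(\tau,X)$; comparing coefficients of $X^n$ on both sides yields $\langle 1,g\rangle_n=\Theta_\ell^n(g)/(\ell)_n$.

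For the third case $f=g=1$, the plan is to use~\eqref{defCKVZ1} to replace each factor $\Phi_1(\tau,\pm X)$ by $e^{\pm XE_2/12}\Phi_1^{(\delta)}(\tau,\pm X)$; the two exponentials cancel, leaving $\Phi_1^{(\delta)}(\tau,-X)\Phi_1^{(\delta)}(\tau,X)$. Expanding both factors with~\eqref{omDef} and reading the coefficient of $X^n$ gives
$$
\frac{\langle 1,1\rangle_n}{n!}\;=\;\frac{(-1)^n}{(n!)^2}\sum_{m=0}^n(-1)^m\binom{n}{m}^{\!2}\,\omega_m\,\omega_{n-m}.
$$
The apparent sign discrepancy with~\eqref{ERCBexplicit} is resolved by the symmetry $m\leftrightarrow n-m$: for odd~$n$ both sides vanish, while for even~$n$ the factor $(-1)^n$ is trivial. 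This symmetry check is the one place where a careless reading could go wrong, and so is the main thing I would emphasize; beyond it, everything is formal manipulation of generating series whose transformation laws are already established.
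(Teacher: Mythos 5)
Your proposal is correct and follows essentially the same route as the paper: the generating-function identity $\Phi_f(\t,-X)\Phi_g(\t,X)=\sum_n\langle f,g\rangle_n X^n/n!$ combined with the transformation laws \eqref{ModCK} and \eqref{CK1transf} for modularity, then \eqref{proportional}, \eqref{CKrelations3}, and \eqref{defCKVZ1}/\eqref{omDef} for the three cases. Your explicit check that the apparent factor $(-1)^n$ in the $f=g=1$ case is harmless (odd brackets vanish by antisymmetry) is a useful detail the paper leaves implicit.
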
 
\begin{proof} The statement about modularity follows immediately from the transformation
properties of $\Phi_f(\t,X)$ for modular forms of positive weight (equation~\eqref{ModCK})
or weight~0 (equation~\eqref{CK1transf}) together with equation~\eqref{CKandRCB}, which
in the new notation and in view of equation~\eqref{CKviaDD} takes on the simpler form
\be\label{CKandERCB}
 \Phi_f(\t,-X)\,\Phi_g(\t,X)\,\=\sum_{n=0}^\infty \langle f,g\rangle_n\,\frac{X^n}{n!}\,. \ee
The proportionality of $\langle f, g\rangle_n$ with $[f,g]_n$ for $k$~and~$\ell$ positive follows 
from equation~\eqref{proportional}, which says that $[f,g]_n=\frac{(k)_n(\ell)_n}{n!}\langle f,g\rangle_n$
in this case.  For $g$ of positive weight~$\ell$ and $f=1$, equations~\eqref{CKandERCB} and~\eqref{CKrelations3}
immediately imply that $\langle f,g\rangle_n=\frac1{(\ell)_n}\K_\ell^n(g)$, so that the Kaneko-Koike derivatives 
can be seen as a special case of the Rankin-Cohen bracket once the latter has been extended.  Finally, the explicit
formula for $\langle 1,1\rangle_n$ follows immediately from~\eqref{CKandERCB} and~\eqref{omDef}.
\end{proof}

For completeness, we give a small table of values of~$\langle 1,1\rangle_n$ for $n$~even (the odd values 
vanish because in that case the bracket is antisymmetric), with $\D=(E_4^3-E_6^2)/1728$ as usual:
\begin{center}  \def\arraystretch{1.5}  \begin{tabular}{c|ccccccccc}
$n$ & $\;0\;$ & $\;2\,$ & $\;4\;$ & $\;6\;$ & $\;8\;$ & $\;10\;$ & $\;12\;$  \\ \hline
$n!\,\langle1,1\rangle_n$ & $\;1\;$ & $-\frac1{36}E_4$ & $0$ &  $\;36\D\;$  & $\;\frac{352}3E_4\D\;$ & $\;260E_4^2\D\;$  
 & $480E_4^3\D +1259136\D^2$   \end{tabular} \;. \end{center} 

We emphasize that the point of the extended bracket is not to extend the definition of 
the original Rankin-Cohen bracket from positive weight to weight~0, since~\eqref{rc-bracket} already
makes perfectly good sense for modular forms of arbitrary positive, zero, or negative integral weights (and for that 
matter even half-integral or arbitrary rational weights) and always sends a pair of modular forms of weights 
$k$ and~$\ell$ to a modular form of weight~$k+\ell+2n$, simply because the transformation laws~\eqref{transf1} 
and~\eqref{transf2} hold for any value of~$k$.  But if one is interested in studying holomorphic 
modular forms only, then there are no non-zero forms of negative weights and the only forms of weight~0 are 
constants, and in the  case when $f$ or $g$ is constant the original Rankin-Cohen bracket 
$[f,g]_n$ vanishes, whereas the new bracket $\langle f,g\rangle_n$ does not. However, the real point
of introducing~$\lr{f,g}_n$ is not even this extension to weight~0 but rather that all formulas involving 
Rankin-Cohen brackets become much simpler even in positive weight if we use the extended ones instead, as
will see repeatedly in the rest of this paper.

An immediate consequence of the above definition and theorem is that Theorem~\ref{Struct}, which in its
original formulation involved an unaesthetic case distinction depending whether the order of the MLDO in question 
was less than half its weight (in which case it was a combination of Rankin-Cohen brackets) or equal to half 
its weight (in which case one needed the Kaneko-Koike operators as well), can now be given in a more 
uniform and much simpler form:
\begin{theorem}[= uniform restatement of Theorem~\ref{Struct}]\label{uniformMLDO} Every MLDO on the 
full modular group can be written uniquely as a sum of extended Rankin-Cohen brackets. 
\end{theorem}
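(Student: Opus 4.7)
The plan is to deduce the theorem from Theorem~\ref{Struct} together with the dictionary~\eqref{ERCBexplicit} of Theorem~\ref{prop.ERCB}, which identifies every extended Rankin-Cohen bracket with, up to a non-zero scalar, either an ordinary Rankin-Cohen bracket with a modular form of positive weight or a Kaneko-Koike operator.  I would proceed by induction on the order of the MLDO, using the symbol map and the exact sequence~\eqref{exact} to peel off one extended bracket at a time.

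First I would record that for any $h\in M_{K-2r}(\G_1)$ with $K\ge 2r\ge 0$, the operator $f\mapsto\lr{h,f}_r$ is an MLDO of type $(k,k+K)$ of order exactly~$r$ (when $h\ne 0$), and I would compute its symbol.  The top-order term in the expansion~\eqref{a-exp} of $\lr{h,f}_r$ obtained from~\eqref{defERCB} is $h\,\DD r(f)=h\,D^r(f)/(k)_r$ (we may assume $k>0$, the interesting range in the paper), so its symbol is the non-zero multiple $h/(k)_r$ of~$h$.  In particular, in the top-order case $K=2r$ where $h$ is a constant, the second line of~\eqref{ERCBexplicit} gives $\lr{1,\cdot}_r=\K_k^r(\cdot)/(k)_r$, which is still a non-trivial monic MLDO of order~$r$.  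Thus $h\mapsto\lr{h,\cdot}_r$ furnishes a canonical linear splitting of the symbol map $\s$ appearing in~\eqref{exact}, valid for every $r$ with $K\ge 2r$.

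Given $L\in\MLDO^{(\le n)}_{k,k+K}$ I would set $h_n:=(k)_n\,\s(L)\in M_{K-2n}$; then $L-\lr{h_n,\cdot}_n$ has strictly smaller order, and inductively can be written as $\sum_{r=0}^{n-1}\lr{h_r,\cdot}_r$ with $h_r\in M_{K-2r}$, yielding the desired decomposition.  Uniqueness is then immediate by the same symbol calculation applied to a putative relation $\sum_{r=0}^n\lr{h_r,\cdot}_r=0$: taking symbols at order~$n$ forces $h_n=0$, and downward induction disposes of the remaining terms.

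The one point that really needs the extended formalism is the top-order case $K=2r$, where the ordinary Rankin-Cohen bracket with a constant vanishes and so cannot provide the required splitting; this is precisely what forced the case distinction in Theorem~\ref{Struct}.  In the extended framework the second case of~\eqref{ERCBexplicit} quietly produces the Kaneko-Koike operator as $\lr{1,\cdot}_r$, absorbing that special case into the general formalism and yielding the uniform statement.
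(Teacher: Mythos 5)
Your argument is correct and is essentially the paper's: the paper obtains Theorem~\ref{uniformMLDO} as an immediate consequence of Theorem~\ref{Struct} (itself proved by exactly your symbol-peeling induction, subtracting a Rankin--Cohen bracket when $2r<K$ and the Kaneko--Koike operator when $2r=K$) combined with the identification~\eqref{ERCBexplicit}, and you have simply run that same induction directly in the extended-bracket language, where the computation of the symbol of $f\mapsto\langle h,f\rangle_r$ as $h/(k)_r$ absorbs the case distinction. Nothing is missing; your uniqueness argument via downward induction on symbols matches the linear-independence observation in the paper's corollary to Theorem~\ref{Struct}.
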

\noindent Explicitly, if $L\in\MLDO_{k,k+K}^{(\le n)}$, then $L(f)=\sum_{r=0}^n\langle h_r,f\rangle_r$ 
for some modular forms $h_r\in M_{K-2r}$. 

A different use that we can make of the modified derivatives is to give a cleaner and more uniform description
of quasimodular forms for the full modular group (and in fact also for other non-cocompact lattices,
as we will see in the next section), which then also has applications to the description of MLDOs.
In Section~\ref{MLDOandQMF} we saw that the space $M_K^{(\le n)}(\G_1)$ of quasimodular forms 
of strictly positive weight~$K$ and depth $\le n$ is canonically isomorphic to $\bigoplus_{0\le r\le n}M_{K-2r}(\G_1)$
in two different ways: on the one hand by writing its elements as polynomials in~$E_2$ (equation~\eqref{isom1}) and 
on the other by writing them as linear combinations of $r$\thin th derivatives of elements of spaces $M_k^+(\G_1)$ 
(where $K=k+2r$) that are somewhat artificially defined as $M_k(\G_1)$ for $k>2$ and as $\qM_2(\G_1)=\C E_2$ 
for~$k=2$ (equation~\eqref{isom2}). But using the renormalized derivatives $\DD n$ and their extension~\eqref{DDn1}
to weight~0, we can write the latter isomorphism much more naturally by saying that each quasimodular form of
depth~$\le n$ is a linear combination of modified derivatives of modular forms of weight~$\ge0$, namely,
\be\label{isom3} \bigoplus_{r=0}^nM_{K-2r} \;\;\mybig\simeq\;\;\tM_K^{(\le n)}\,, \qquad 
(h_0,\dots,h_n) \;\;\longleftrightarrow\;\; F\,=\, \sum_{r=0}^n \DD r(h_r) \ee
instead of~\eqref{isom2}. Of course the content of this isomorphism is the same as that of~\eqref{isom2}, 
but the new one is better in several ways.  One is that if we now two isomorphisms between the ring
$\qM_*$ of quasimodular forms on~$\G_1$ and the {\it same} ring $M_*[X]$ of polynomials 
in one variable over the ring $M_*$ of modular forms on~$\G_1$, one by sending the polynomial
$\sum h_rX^r$ to the quasimodular form $\sum\DD r(h_r)$ and one by sending the polynomial $\sum g_rX^r$
to the quasimodular form $\sum g_r(E_2/12)^r$.  (Note that it is more natural here and in all other formulas 
to use $E_2/12$ rather than $E_2$ itself as the generator of $\qM_*(\G_1)$ over $M_*(\G_1)$, because $E_2/12$
is mapped to~1 under the derivation~$\de$ on~$\qM_*$.  This comment will become more important in the next 
section, when we replace $E_2/12$ by a choice of element $\E\in\qM_2(\G)$ with $\de(\E)=1$ for sublattices $
\G$ of $\G_1$ or more general lattices~$\G$ with cusps.)  If we compose one of these isomorphisms with
the inverse of the other, we get an endomorphism of $M_*[X]=\bigoplus M_*X^r$ that is represented 
with respect to the basis $\{X^r\}$ by a triangular matrix with 1's on the diagonal (i.e., the coefficient
of $(E_2/12)^n$ in the expansion of $\DD n(f)$ as a polynomial in $E_2/12$ is equal to~$f$, with coefficient~1).
In fact, we can now describe this whole matrix completely explicitly by using the generating
series identities~\eqref{CKrelations} and~\eqref{defCKVZ1}, which imply the formula
\be \label{DDntoEVZ}
\DD n(f) \= \sum_{r=0}^n \binom nr\;\d^{\langle r\rangle}(f)\;\Bigl(\frac{E_2}{12}\Bigr)^{n-r} 
\ee
(equivalent to~\eqref{defVZexpl} when the weight of~$f$ is positive, but now considerably simpler) for any $f\in M_*$.  
Here we have defined $\d^{\langle r\rangle}(f)$ by modifying the canonical higher Serre derivatives defined in 
Section~\ref{RCBandHSD} in the same way as we did for the ordinary derivatives, i.e., by setting
\be\label{VZmodified}
 \d^{\langle n\rangle}(f) \= \d^{\langle n\rangle}_k(f) \= \frac{\d^{[n]}(f)}{(k)_n} 
   \quad(f\in M_k(\G),\;k>0),\qquad \d^{\langle n\rangle}(1) \= \frac{(-1)^n}{n!}\,\om_n\,,
\ee
so that $\Phi_f^{[\d]}(\t,X)=\sum \d^{\langle n\rangle}(f)\,\frac{X^n}{n!}$ in all cases.
Note that, just as the unnormalized canonical higher Serre derivatives can be used instead of ordinary
derivatives to write the Rankin-Cohen brackets as a sum of terms that are individually modular 
(equation~\eqref{rc-bracket-vz}), the modified ones can be used to write the extended Rankin-Cohen bracket
$\langle f,g\rangle_n$ as a sum of $n+1$ terms each of which is modular:
\be\label{RCBviaEVZ}
\big\langle f,\,g\big\rangle_n \= \sum_{r=0}^{n}(-1)^r\binom nr\,\d^{\langle r\rangle}(f)\,\d^{\langle n-r\rangle}(g)\,.
\ee
Here, just as with~\eqref{rc-bracket} and~\eqref{rc-bracket-vz}, the coefficients are the same as in~\eqref{defERCB}.

Finally, the discussion above can also be used to give a new interpretation of the operator~\eqref{pairing} 
defined in Theorem~\ref{QMFtoMLDO} in the case of its action on modular forms, and at the same time to extend it to 
the case when~$k=0$. Specifically, we define a {\it pairing}
\be\label{newpairing}  \bigl\{\;\,,\;\,\bigr\}:\,M_*\otimes \qM_* \ra M_*\,, \qquad
   f\otimes F\;\mapsto\;\bigl\{f,F\bigr\} \= \sum_{r=0}^\infty \frac{(-1)^r}{r!}\,\de^r(F)\,\DD r(f)\,. 
\ee
between quasimodular and modular forms.  The bracket $\{f,F\}$ coincides with $L_{F,k}(f)$ as defined in
Theorem~\ref{QMFtoMLDO} when $k$ is strictly positive, and is therefore modular (of weight~$k+K$) because 
$L_{F,k}$ is an MLDO of type~$(k,k+K)$, but it now makes sense also in weight~0 and is still modular in that 
case.  To see this, and to understand the pairing better, we observe that
\be\label{compatible}
\bigl\{f\thin,\thin \DD n(g)\bigr\} \= \bigl\langle f,\thin g\bigr\rangle_n
\ee
for any modular forms~$f$ and~$g$ and any integer~$n\ge0$ (this follows immediately from equation~\eqref{drDn} 
in the next section), so that the pairing $f\otimes F\mapsto \{f,F\}$ is related to the decomposition of quasimodular 
forms given in equation~\eqref{isom3} by
\be\label{pairingandRCB}
 F \= \sum_{r=0}^n\DD r(g_r) \quad\Longrightarrow\quad \bigl\{f,\thin F\}
      \=  \sum_{r=0}^n \bigl\langle f,\thin g_r\bigr\rangle_r\,. 
\ee
This also gives a proof of the formula~\eqref{Dn1}, because under the correspondence~\eqref{isomQMtoMLDO}
between quasimodular forms and modular linear differential operators the MLDOs $\K_k^n$ and $\d_k^{[n]}$ correspond to the quasimodular forms~$(-1)^n(k)_n\DD n(1)$ and $(k)_n\thin\bigl(-\frac1{12}E_2\bigr)^n$, respectively, so 
that~\eqref{Dn1} is just a consequence of~\eqref{KKfromVZ}, which was proved in the last section. 

As a final remark, we observe that in equations~\eqref{CKviaDD}, \eqref{defERCB},~\eqref{RCBviaEVZ} 
and~\eqref{newpairing} we were able to omit the index~$k$ that was needed in the corresponding earlier equations~\eqref{DefCK},
\eqref{rc-bracket},~\eqref{rc-bracket-vz} and~\eqref{pairing} because it is already incorporated into the definition 
of the modified derivative~$\DD r$.  A consequence of this is that these formulas can be written symbolically in a very
simple form.  This was already done in the first case in equation~\eqref{CKsymbolic}, and the other three can be written as
$$ \big\langle f,\,g\big\rangle_n \= {\sf m}\bigl((1\otimes D\m D\otimes1)\up n\,(f\otimes g)\bigr)
  \= {\sf m}\bigl((1\otimes\d\m\d\otimes1)\up n\,(f\otimes g)\bigr)$$
and
$$\bigl\{f,F\bigr\} \= {\sf m}\bigl(e\up{-D\otimes\de}(f\otimes F)\bigr)\,, $$
where $\sf m$~denotes multiplication.  Alternatively, we can also express the right-hand sides of these last three
equations as the restrictions to the diagonal $\t_1=\t_2=\t$ of 
$(D_2-D_1)\up n(f(\t_1)g(\t_2))$, $(\d_2-\d_1)\up n(f(\t_1)g(\t_2))$ or $e\up{-\de_1D_2}(f(\t_1)g(\t_2))$, respectively, 
where the subscripts on the differential operators indicate which variable~$\t_i$ they act on.

\section{Application: higher Serre derivatives of quasimodular forms}\label{APPLIC}

The theme of this section is a non-trivial extension of the Rankin-Cohen 
bracket from modular to quasimodular forms that was discovered by Fran\c cois Martin and Emmanuel Royer 
and in a different form by Youngju Choie and Min Ho Lee (and that had also been found by the third 
author, but never published).  Of course the Rankin-Cohen bracket $[f,g]_n^{(k,\ell)}$ can be defined 
for any two holomorphic functions $f$ and~$g$, any non-negative integer~$n$, and any integers (or for
that matter, even complex numbers) $k$~and~$\ell$, but the original point of the specific complicated-looking
bilinear combination of derivatives in its definition was that if $f$ and~$g$ are modular forms of weights 
$k$ and~$\ell$ on some lattice, then $[f,g]_n^{(k,\ell)}$ is also a modular form, of weight~$k+\ell+2n$.
At first sight this statement seems impossible to generalize in an interesting way to quasimodular forms,
since the ring of quasimodular forms is closed under differentiation anyway, so that each term of~\eqref{rc-bracket}
is quasimodular if $f$ and~$g$ are.  But if we remember that modular forms are simply quasimodular forms of
depth~0, then we can ask if there is a way to make a bracket of two quasimodular forms whose depth is at
most the sum of the two individual depths, independent of~$n$.  (The individual terms of~\eqref{rc-bracket}
in general have depths equal to the sum of the two individual depths plus~$n$.)  The result of Martin and 
Royer is that this {\it is} possible, and that all one has to do is to replace the upper indices $k$ and $\ell$
of the bracket by $k-p$ and $\ell-q$, where $p$ and~$q$ are the depths (or upper bounds on the depths)
of $f$ and~$g$.  We will prove this in a somewhat more general form by replacing the original Rankin-Cohen 
brackets with the extended ones that were defined in the last section, in  which case the theorem still makes
sense (and still is true) even if one of $f$ or $g$ has weight~0 and hence is constant.  (If both have weight~0,
then there is nothing to prove, since the depths are then also~0 and we are simply back to the modularity of
the extended Rankin-Cohen brackets $\langle 1,1\rangle_n^{(0,0)}$.)

Actually, as well as this extension of the Martin-Royer theorem to include forms of weight~0, we 
will give a strictly stronger result of which that theorem is an immediate corollary.  This is the
statement that the action on quasimodular forms of canonical higher Serre derivatives of arbitrary 
orders preserves their depth if one chooses the index of the Serre derivative to be the weight of 
the form  minus its depth, rather than just its weight as in the modular case.  Because of the 
expression~\eqref{rc-bracket-vz} for Rankin-Cohen brackets as bilinear combinations of  canonical 
higher Serre derivatives, this immediately gives the Martin-Royer theorem as well, but it is both 
stronger and simpler, since it applies to individual quasimodular forms rather than to pairs.  

We formulate the two results just described more quantitatively in the following two theorems. 

\begin{theorem}\label{VZforQMF} 
If $f$ is a quasimodular form of weight~$k$ and depth $\le p\,$, then the quasimodular form 
$\d_{k-p}^{[n]}(f)$ has depth $\,\le p\,$ for all integers~$\,n\ge0\,$.  
\end{theorem}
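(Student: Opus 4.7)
The plan is to reduce to the case $n = 1$, where the key algebraic cancellation occurs, and then induct on $n$ using the defining recursion~\eqref{defVZ}. The base case $n = 0$ is trivial.

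For $n = 1$, the heart of the matter is the operator identity $[\de,\d_a] = W - a$ on $\qM_*(\G)$, valid for any parameter~$a$, where $\d_a F \: DF - \tfrac{a}{12}E_2\,F$. Indeed, from $[\de,D] = W$ (part of~\eqref{commutators}) and $\de(E_2) = 12$,
\[
\de(\d_a F) \= D(\de F) + W F - a F - \tfrac{a}{12}E_2\,\de F \= \d_a(\de F) + (W - a)F\,.
\]
Combining this with $\de\circ W = (W+2)\circ\de$ (from $[W,\de] = -2\de$), a short induction on~$j$ yields the iterated commutation
\begin{equation}\label{pr-iterate}
\de^j\,\d_a \= \d_a\,\de^j \+ \bigl[\,j(W - a) + j(j-1)\,\bigr]\,\de^{j-1} \qquad (j \ge 0)\,.
\end{equation}
Applied with $a = k - p$ and $j = p + 1$ to $f \in \qM_k^{(\le p)}(\G)$: the first term vanishes since $\de^{p+1}f = 0$, and since $\de^p f$ has weight $k - 2p$ the second reduces to $(p+1)\bigl[(k - 2p) - (k - p) + p\bigr]\de^p f = 0$. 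Hence $\d_{k-p}(f)$ has depth~$\le p$, which settles the case $n=1$.

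For the inductive step, suppose the conclusion holds for orders $n - 1$ and $n$. The recursion~\eqref{defVZ} with parameter $k - p$ in place of $k$ reads
\[
\d_{k-p}^{[n+1]}(f) \= \d_{k-p+2n}\bigl(\d_{k-p}^{[n]}(f)\bigr) \m \frac{n(n + k - p - 1)}{144}\,E_4\,\d_{k-p}^{[n-1]}(f)\,.
\]
By induction, $\d_{k-p}^{[n]}(f)$ has weight $k + 2n$ and depth~$\le p$; since the Serre-derivative index $k - p + 2n$ equals exactly (this weight) $- p$, the case-$n=1$ result applied to $\d_{k-p}^{[n]}(f)$ shows that the first term has depth~$\le p$. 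The second term is $E_4$ times a depth-$\le p$ form, which again has depth~$\le p$ because multiplication by a modular form commutes with~$\de$. Thus $\d_{k-p}^{[n+1]}(f)$ has depth~$\le p$, closing the induction.

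The main obstacle is simply recognizing~\eqref{pr-iterate} and observing that the shifted index $a = k - p$ is precisely the value at which the polynomial $j(W - a) + j(j-1)$ annihilates $\de^p f$ when $j = p + 1$; once this is spotted, both the base case and the inductive step are essentially formal consequences of the $\lie$-commutation relations on $\qM_*(\G)$, so the argument goes through uniformly on any lattice admitting a Serre derivative.
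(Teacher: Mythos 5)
Your proof is correct, and it takes a genuinely different route from the one in the paper. The paper proves Theorem~\ref{VZforQMF} by working with the Cohen--Kuznetsov generating series $\Phi^{(\d)}_{f,k-p}(X)=e^{-X\E}\Phi_{f,k-p}(X)$ and showing that $(\de-X)^m$ annihilates $\Phi_{f,k-p}(X)$ for $m>p$; in its first version this is done after reducing, via the decomposition~\eqref{isom3}, to the case $f=D\up r(h)$ with $h$ modular, where the vanishing comes from an $m$\thin th finite difference of a polynomial of degree~$p$, and in its second (sketched) version by computing $(\de-X)^m\Phi_{f,K}(X)$ directly from the $\lie$-relations. You instead isolate the order-one statement via the operator identity $[\de,\d_a]=W-a$ and its iterate~\eqref{pr-iterate}, observe that the choice $a=k-p$ makes the coefficient $(p+1)\bigl[(k-2p)-(k-p)+p\bigr]$ of $\de^{p}f$ vanish, and then propagate through the defining recursion~\eqref{defVZ}, using only that multiplication by the modular form $E_4$ (or the curvature $\Om$ in general) commutes with~$\de$. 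Your argument is shorter and more elementary: it needs neither generating functions nor the additive decomposition of quasimodular forms, and it makes transparent exactly why the index must be shifted by the depth. What the paper's computation buys in exchange is extra quantitative information, namely the closed formula for $\de^{p}\bigl(\d_{k-p}^{[n]}(f)\bigr)$ as an explicit multiple of $\d^{[n+r-p]}(h)$ when $f=D\up r(h)$, which identifies the top coefficient of the resulting depth-$p$ form; your induction establishes the vanishing of $\de^{p+1}$ but does not produce that formula. Both arguments are uniform over all lattices admitting a splitting, as you note. One tiny presentational point: in~\eqref{pr-iterate} the operator $W$ in the correction term acts after $\de^{j-1}$, so on a form of weight~$k$ it takes the value $k-2(j-1)$; you use this correctly in the evaluation, but it is worth saying explicitly since misreading the order of composition would change the cancellation.
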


\begin{corollary*} [Generalized Martin-Royer theorem] \label{MRforQMF}
If $f$ and $g$ belong to $\qM_k^{(\le p)}$ and $\qM_\ell^{(\le q)}$, respectively,
then the extended Rankin-Cohen bracket $\langle f,g\rangle_n^{(k-p,\ell-q)}$ 
belongs to $\qM_{k+\ell+2n}^{(\le p+q)}$ for all~$n\ge0$.
\end{corollary*}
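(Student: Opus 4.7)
The plan is to deduce the corollary directly from Theorem~\ref{VZforQMF} by expanding the extended Rankin-Cohen bracket into a sum of products of canonical higher Serre derivatives, using a version of formula~(\ref{rc-bracket-vz}) in which the weight parameters have been shifted. Explicitly, after passage to extended brackets (as in Theorem~\ref{prop.ERCB}) and after replacing the actual weights $k, \ell$ by the shifted parameters $k-p, \ell-q$, I would establish the identity
\[
\langle f, g\rangle_n^{(k-p,\,\ell-q)} \;=\; \sum_{r=0}^n (-1)^r \binom{n}{r}\, \frac{\d_{k-p}^{[r]}(f)}{(k-p)_r}\, \frac{\d_{\ell-q}^{[n-r]}(g)}{(\ell-q)_{n-r}},
\]
valid for any holomorphic $f, g$, with the boundary cases $k=p$ or $\ell=q$ handled via the modified-derivative formalism of Section~\ref{ERCB2}.

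Granted this identity, the remainder of the argument is immediate. First, Theorem~\ref{VZforQMF} applied to $f\in\qM_k^{(\le p)}$ gives $\d_{k-p}^{[r]}(f) \in \qM_{k+2r}^{(\le p)}$ for each $r\ge 0$, and symmetrically $\d_{\ell-q}^{[n-r]}(g) \in \qM_{\ell+2n-2r}^{(\le q)}$. Second, the product of two quasimodular forms of depths $\le p$ and $\le q$ is quasimodular of depth $\le p+q$; this is elementary and follows from the Leibniz rule for the derivation~$\de$ (equivalently, directly from the Taylor-expansion form~(\ref{qmfTaylor}) of the transformation law). Hence each summand of the above display lies in $\qM_{k+\ell+2n}^{(\le p+q)}$, and the sum does too, which is the desired conclusion.

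The main obstacle is the justification of the displayed identity when $a:=k-p$ and $b:=\ell-q$ differ from the actual weights of $f$ and $g$, since formula~(\ref{rc-bracket-vz}) was originally stated only in the modular case with the parameters equal to the weights. The cleanest approach is to regard it as a polynomial identity in the parameters $a, b$: after clearing the denominators $(a)_n (b)_n$, both sides become bilinear differential operators applied to $f\otimes g$ whose coefficients are polynomials in $a$ and $b$ over $\C[E_2]$, because $\d_a^{[r]}$ is given by the explicit formula~(\ref{defVZexpl}) as a polynomial in $a$ with coefficients in $\C[E_2]\cdot D^s$. Since the identity is known on the Zariski-dense subset of parameter values where $a, b$ are positive integers and $f, g$ are genuinely modular of those weights, it extends identically in $a, b$, and in particular specializes to $(a,b)=(k-p,\ell-q)$.
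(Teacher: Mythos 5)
Your proposal is correct and is essentially the paper's own argument: the corollary is deduced there in one line from equation~(\ref{rc-bracket-vz}) (or its extension~(\ref{RCBviaEVZ})), i.e.\ precisely your expansion of $\langle f,g\rangle_n^{(k-p,\ell-q)}$ into products $\d_{k-p}^{[r]}(f)\,\d_{\ell-q}^{[n-r]}(g)$, followed by Theorem~\ref{VZforQMF} applied to each factor and the additivity of depth under multiplication. The only cosmetic difference is your Zariski-density justification of the shifted-parameter identity; the paper gets this for free because (\ref{defVZexpl}), hence the generating-function relation $\Phi_{f,a}(\t,X)=e^{XE_2(\t)/12}\Phi^{(\d)}_{f,a}(\t,X)$, already holds for arbitrary holomorphic $f$ and arbitrary index $a$, so multiplying the two series with indices $k-p$ and $\ell-q$ gives the identity directly.
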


For the proofs we will need the following lemma, which is a general statement about the action
of the Lie algebra~$\lie$ that has many applications in the theory of quasimodular forms.

\bl\label{GSM.L1}
 In the universal enveloping algebra of~$\lie$ we have the identities
\be\label{GSM25}\de D^n\= D^{n}\de \+ n\,D^{n-1}(W+n-1)
\ee 
for all $n\geq0$ and more generally 
\be\label{GSM26}
   \de^{\,r}D^n \= \sum_{j=0}^r\binom rj\,(n-j+1)_j\,D^{n-j}\de^{\,r-j}(W+n-r)_j
\ee 
for all $r$, $n\geq 0$, where $(n-j+1)_j\,D^{n-j}$ is to be interpreted as~$0$ if $j>n$. 
\el

\begin{corollary*}
If $f$ is a modular form of weight~$k$, then we have $\de D^n(f)= n(k+n-1)D^{n-1}(f)$ for all $n\ge0$ 
and more generally $\de^{\,r}D^n(f)=(n-r+1)_r(k+n-r)_r\,D^{n-r}(f)$ for all $n,r\ge0$.
\end{corollary*}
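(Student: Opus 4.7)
The plan is to prove the two identities by a pair of nested inductions using only the commutation relations~\eqref{commutators}. The first identity is precisely the $r=1$ specialization of the second, so I would prove it first (by induction on~$n$) and then use it as the key ingredient in the induction on~$r$ for the general formula.

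For the first identity, the case $n=1$ is just the defining relation $\de D - D\de = W$, and $n=0$ is trivial. For the inductive step I would multiply $\de D^n = D^n\de + n D^{n-1}(W+n-1)$ on the right by~$D$, use $\de D = D\de + W$ to commute the inner~$\de$ through, and use $(W+c)D = D(W+c+2)$ (a direct consequence of $[W,D]=2D$) to slide $(W+n-1)$ across the new~$D$. Gathering terms collapses the result into $D^{n+1}\de + (n+1)D^n(W+n)$, which is the formula for~$n+1$.

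For the general identity I would induct on~$r$, the case $r=0$ being trivial. Writing $\de^{r+1}D^n = \de\cdot\de^r D^n$ and inserting the inductive hypothesis presents the left-hand side as a sum over~$j$ in which the outer~$\de$ must cross $D^{n-j}\de^{r-j}(W+n-r)_j$. Applying the $r=1$ identity just established splits each summand into a diagonal piece $D^{n-j}\de^{r-j+1}(W+n-r)_j$ and an off-diagonal piece $(n-j)D^{n-j-1}(W+n-j-1)\de^{r-j}(W+n-r)_j$. In the off-diagonal piece I would push $(W+n-j-1)$ past $\de^{r-j}$ using $(W+c)\de^s = \de^s(W+c-2s)$ (a consequence of $[W,\de]=-2\de$), reindex $j\mapsto j+1$ via the factorial identity $(n-j+1)_j(n-j) = (n-j)_{j+1}$, and add the result to the diagonal piece at the same value of~$j$. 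After factoring $(W+n-r)_{j-1}$ out of the combined expression, the coefficient of $D^{n-j}\de^{r+1-j}$ reduces to the polynomial identity
\[
\binom{r}{j}(W+n-r+j-1) \,+\, \binom{r}{j-1}(W+n+j-2r-2) \;=\; \binom{r+1}{j}(W+n-r-1),
\]
which splits into Pascal's relation $\binom{r}{j}+\binom{r}{j-1}=\binom{r+1}{j}$ for the $W$-coefficient and the elementary identity $\binom{r}{j}\,j = \binom{r}{j-1}(r-j+1)$ for the constant term.

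The corollary then follows instantly: for $f\in M_k(\G)$ we have $Wf=kf$ and $\de f=0$, so in the expansion of $\de^r D^n f$ every term with $r-j\ge 1$ contains a factor $\de^{r-j}$ acting on $(W+n-r)_j f = (k+n-r)_j f$ (a scalar multiple of a modular form), which vanishes; only the $j=r$ term survives, giving $(n-r+1)_r(k+n-r)_r D^{n-r}f$. I anticipate that the only real difficulty will be the bookkeeping in the induction on~$r$: correctly matching up the $+2$ shift from commuting $W$ past~$D$, the $-2s$ shift from commuting $W$ past $\de^s$, and the reindexing $j\mapsto j+1$, so that the two shifts conspire exactly to turn $(W+n-r)$ into the required $(W+n-r-1)$.
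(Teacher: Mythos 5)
Your proposal is correct and follows essentially the same route as the paper, which states that Lemma~\ref{GSM.L1} is proved by induction on~$n$ for~\eqref{GSM25} and then by induction on the number of factors of~$\de$ for~\eqref{GSM26}, and that the corollary then follows immediately because modular forms satisfy $Wf=kf$ and $\de f=0$, so only the $j=r$ term of~\eqref{GSM26} survives. You have merely supplied the combinatorial details of the double induction that the paper omits as ``straightforward and well known,'' and your bookkeeping (the $+2$ and $-2s$ shifts and the Pascal-type identities) checks out.
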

\noindent The proof of the lemma (using induction on~$n$ for the first statement and then  
on~$s$ for the second) is straightforward and well known, so we will omit it, and the corollary follows 
immediately since modular forms are annihilated by~$\de$. In terms of the modified derivatives $\DD n(f)$ 
introduced in the last section, we can rewrite the second statement of the corollary as
\be\label{drDn}
\frac{\de^{\,r}(\DD n(f))}{r!}  \= \binom nr\,\DD {n-r}(f) \qquad(n,r\ge0,\;f\in M_*(\G))
\ee
if the weight of~$f$ is positive. Note that this equation remains true also for~$f=1$, 
as one sees by the following calculation using the definition~\eqref{DDn1} and equation~\eqref{GSM26}
applied to~$\E=E_2/12\,$:
\bas & (n-1)!\,\de^r(\DD n(1)) \= \de^r(D^{n-1}(\E)) 
  \=\sum_{j=0}^r \binom rj\,(n-j)_j\,(n-r+1)_j\,D^{n-1-j}(\de^{\,r-j}(\E))
 \\  &\qquad\= (n-r)_r\,(n-r+1)_r\,D^{n-r-1}(\E) \+ r\,(n-r+1)_{r-1}\,(n-r+1)_{r-1}\,D^{n-r}(1) \\
 &\qquad\= \frac{(n-1)!\,n!}{(n-r-1)!\,(n-r)!}\,D^{n-r-1}(\E) \+ r\,\frac{(n-1)!^2}{(n-r)!^2}\,\delta_{n,r}
\= \frac{(n-1)!\,n!}{(n-r)!}\,\DD {n-r}(1)\,.
 \eas
(Actually, to prove~\eqref{drDn} it is enough to give the calculation for~$r=1$, which is slightly 
simpler, and then use induction on~$r$, but we gave the calculation in general because it is not much 
longer and gives a nice illustration of the properties of the modified derivative~$\DD n$.) We already used 
equation~\eqref{drDn} in Section~\ref{ERCB2} to get the formula~\eqref{compatible} relating
extended Rankin-Cohen brackets to the pairing~\eqref{newpairing}.

\smallskip

We now proceed to the proof of Theorem~\ref{VZforQMF}, after which, as already mentioned,
the corollary follows immediately as a consequence of equation~\eqref{rc-bracket-vz} 
(or of its extension~\eqref{RCBviaEVZ} to extended Rankin-Cohen brackets in the case when one
of the forms has weight~0). To do this,
we consider the Cohen-Kuznetsov series of $f$ with index $k-p$, where $f\in\qM_k^{(\le p)}$ as in the theorem.
More specifically, we consider both the $\d$- and the $D$-versions of this series, defined and related by
$$ \Phi^{(\d)}_{f,k-p}(X)\= \sum_{n=0}^\infty \frac{\d_{k-p}^{[n]}(f)}{(k-p)_n\,n!}\,X^n
\= e^{-X\E}\,\Phi_{f,k-p}(X) \= e^{-X\E}\,\sum_{n=0}^\infty \frac{D^n(f)}{(k-p)_n\,n!}\,X^n$$ 
with $\E=E_2/12$ and $\d_{k-p}^{\langle n\rangle}(f)$ defined as in equation~\eqref{VZmodified}.  (Here and for the 
rest of the proof we omit the argument~$\t$ for notational simplicity.) Notice, by the way, that we can assume
that $k>0$, since if $k=0$ then $p$ is also~0 and there is nothing to prove, and then $p\le k/2<k$, so that
the factors $(k-p)_n$ in the denominators of the above formulas never vanish. To prove the theorem, we have 
to show that the series on the left is annhilated by~$\de^{p+1}$, or equivalently that all of the coefficients 
of its image under $\de^p$ are modular rather than merely quasimodular forms.  By Leibniz's formula and
the fact that $\de(\E)=1$ we have
\bas e^{X\E} \de^m\bigl(\Phi^{(\d)}_{f,k-p}(X)\bigr) &\= e^{X\E} \de^m\bigl(e^{-X\E}\,\Phi_{f,k-p}(X)\bigr) \\
&\= \sum_{s=0}^m\binom ms\,(-X)^{m-s}\,\de^s\bigl(\Phi_{f,k-p}(X)\bigr) \= (\de-X)^m\bigl(\Phi_{f,k-p}(X)\bigr) 
\eas
for every $m\ge0$, and we want to show that this expression vanishes for~$m>p$.  

In fact, we will give two different arguments, omitting a few of the details of the calculation in the second case. 
For the first argument we use the fact that every quasimodular form is a linear combination of modified
derivatives $\DD r(h)$ of modular forms (equation~\eqref{isom3}).  Since such a derivative has depth
exactly~$r$, we can assume that the quasimodular form $f$ in Theorem~\ref{VZforQMF} has the form $\DD r(h)$
for some $r\le p$ and some modular form~$h$ of weight $k-2r$.  Then we have
$$ \Phi_{f,k-p}(X) \=\sum_{n=0}^\infty \frac{D^n(\DD r(h))}{(k-p)_n\,n!}\,X^n
\= \frac1{(k-p)_{p-r}} \sum_{n=0}^\infty (k+n-p)_{p-r}\,\DD {n+r}(h)\,\frac{X^n}{n!}  $$
and hence
\bas & (k-p)_{p-r}\,e^{X\E}\,\de^m\bigl(\Phi^{(\d)}_{f,k-p}(X)\bigr) 
 \= \sum_{s=0}^m\binom ms\,(-X)^{m-s}\,\de^s\Biggl(\sum_{n=0}^\infty (k+n-p)_{p-r}\,\DD {n+r}(h)\,\frac{X^n}{n!}\Biggr) \\  
 &\qquad\; \=  X^{m-r}\,\sum_{\ell=0}^\infty
  \Biggl[\sum_{s=0}^m\binom ms\,(-1)^{m-s}\,(k-p+s+\ell-r)_{p-r}(s+\ell-r+1)_r \Biggr]\,\DD \ell(h)\,\frac{X^\ell}{\ell!} 
\eas
for any $m\ge r$, where to obtain the second line we have used equation~\eqref{drDn}.
  If $m>p$ then the expression in square brackets vanishes for every~$\ell$ because it is the $m$\thin th 
difference of a polynomial in~$s$ of degree~$p$.  This proves the theorem.  We also get an explicit 
formula for the $p$\thin th derivative of $\d_{k-p}^{[n]}(f)$ as a modular form for every~$n$, since the $p$\thin th 
derivative of a monic polynomial of degree~$p$ is $p!$ and hence
$$(k-p)_{p-r}\,\de^p\bigl(\Phi^{(\d)}_{f,k-p}(X)\bigr) 
  \=  p!\,X^{p-r}e^{-X\E}\,\sum_{h=0}^\infty \DD h(f)\,\frac{X^h}{h!} \= p!\,X^{p-r}\, \Phi^{(\d)}_f(X)\,, $$
so that $\de^p(\d^{[n]}(f))$ vanishes if $n<p-r$ and is a simple multiple of $\,\d^{[n+r-p]}(h)\,$ if~$n\ge p-r$.

For the second argument, which we only sketch, we work directly with the action of~$\lie$ on the 
various Cohen-Kuznetsov series involved.  This approach involves slightly more calculation 
but has the advantages that it does not use the decomposition~\eqref{isom3} or the special quasimodular
form~\hbox{$\E=E_2/12$}.  For any quasimodular form $f\in\qM_k$ and any positive integer~$K$ we have
\bas (\de-X)\,\Phi_{f,K}(X)  &\=  \sum_{n=0}^\infty \frac{\de(D^n(f))\m n(K+n-1)D^{n-1}(f)}{n!\,(K)_n}\,X^n \\ 
   &\=   \sum_{n=0}^\infty \frac{D^n(\de(f)) \+ n(k-K)\,D^{n-1}(f)}{n!\,(K)_n}\,X^n
      \qquad\quad \text{(by eq.~\eqref{GSM25})} \\  &\= \Phi_{\de(f),K}(X) \+ \frac{k-K}K\,X\,\Phi_{f,K+1}(X)
\eas
and hence by induction on~$m$
$$  (\de-X)^m\Phi_{f,K}(X) 
 \= \sum_{s=0}^m\binom ms\,\frac{(k-K-m+1)_{m-s}}{(K)_{m-s}}\,X^{m-s}\,\Phi_{\de^s(f),K+m-s}(X) $$
for every integer~$m\ge0$.  (We omit the details of this step, which are slightly messy.)  Now if we
take $K=k-p$ for $f$ of depth~$\le p$, and if $m>p$, then the terms with $s\le p$ vanish because
$(p-m+s)_{m-s}=0$ and those with $s>p$ vanish because $\de^s(f)=0$, so we again find that $(\de-X)^m$ 
annihilates $\Phi_{f,K}(X)$ and therefore that $\de^m$ annihilates $\Phi^{(\d)}_{f,K}(X)$ as desired.

\section{Cocompact and non-cocompact lattices} \label{cocompact}

In this section we discuss a basic dichotomy between the structure of the rings of quasimodular forms 
for non-cocompact and cocompact lattices~$\G$ in~$\SL$.  In the former case, exemplified by the full modular 
group~$\G_1$, there is always a quasimodular but not modular form of weight~2, and then all theorems of the 
previous sections for~$\G_1$ still hold with this form in place of~$E_2$.  In the latter case, exemplified 
by Shimura curves, there is no such quasimodular form and the structure theorems are somewhat different.  
In particular, here there is no analogue of the Serre derivative or the Kaneko-Koike operator, and all 
MLDOs are linear combinations of Rankin-Cohen brackets.

For $\G=\G_1=\MG$ we have already seen that the ring $\qM_*(\G)$ of quasimodular forms is simply $\C[E_2,E_4,E_6]$, 
with the derivations~$\de$ and $D$ given by $12\,\p/\p E_2$ and by Ramanujan's formulas~\eqref{RamDer},
respectively. If $\G$ is a subgroup of $\MG$ of finite index, then the algebra $M_*(\G)$ of modular forms on~$\G$
is in general no longer free, but we still have $\qM_*(\G)=M_*(\G)[E_2]$ and $\de=12\,\p/\p E_2$.  The following 
proposition shows that a similar statement holds for any non-cocompact lattice $\G\subset\SL$.  Recall that for 
any lattice~$\G$, cocompact or not, we have derivations $D$, $W$ and~$\de$ on $\qM_*(\G)$ satisfying~\eqref{commutators}, 
where $W$ is the weight operator and $\,\ker(\de)=M_*(\G)$. In particular, $\de$~maps $\qM_2(\G)$ to~$M_0(\G)=\C$, 
so it must be either 0 or surjective.

\begin{proposition}\label{QMF.Mult} Let $\G\subset\SL$ be an arbitrary lattice.
\newline\noindent  {\rm (a)} If\, $\G$ is cocompact, then $M_2(\G)=\qM_2(\G)$. 
\newline\noindent  {\rm (b)} If\, $\G$ is non-cocompact, then the sequence 
  $0\rightarrow M_2(\G)\rightarrow\qM_2(\Gamma)\xrightarrow{\de}\C\rightarrow0$ is exact.
\newline\noindent  {\rm (c)} In the non-cocompact case, $\qM_*(\G)=M_*(\G)[\E]$ for any $\E\in\qM_2(\G)$ with $\de(\E)\ne0$.
\end{proposition}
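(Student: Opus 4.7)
The three parts of the proposition amount to a single dichotomy. For any lattice $\G$, the derivation $\de$ lowers the weight by~$2$, so $\de\colon\qM_2(\G)\to M_0(\G)=\C$, and by the definition of depth its kernel is exactly $M_2(\G)$. Parts~(a) and~(b) therefore reduce to the single question of whether $\de$ on $\qM_2(\G)$ is the zero map or surjective, and the only substantive content is the construction in~(b) of some $\E\in\qM_2(\G)$ with $\de(\E)\ne0$.

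For~(a) I would argue by contradiction. Given $F\in\qM_2(\G)$ with $\de F=c_0\ne0$, the modified Taylor expansion~\eqref{qmfTaylor} gives $(F|_2\g)(\t)=F(\t)+\frac{c_0}{2\pi i}\,\frac{c}{c\t+d}$ for every $\g=\sm abcd\in\G$. Imitating the classical completion $\wh E_2=E_2-\frac3{\pi y}$, a short direct calculation using $\im(\g\t)=y/|c\t+d|^2$ and $\overline{c\t+d}-(c\t+d)=-2icy$ shows that the non-holomorphic function $\wh F(\t):=F(\t)-\frac{c_0}{4\pi y}$ satisfies $\wh F|_2\g=\wh F$ on the nose. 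Consequently the $(1,0)$-form $\omega=\wh F\,d\t$ is genuinely $\G$-invariant on~$\H$ and descends to a smooth $1$-form on the compact (orbifold) Riemann surface $X=\G\backslash\H$, so Stokes' theorem forces $\int_X d\omega=0$. On the other hand, since $F$ is holomorphic, $\p_{\bar\t}\wh F=-\frac{c_0}{4\pi}\,\p_{\bar\t}(1/y)=\frac{ic_0}{8\pi y^2}$, whence $d\omega=-\frac{c_0}{4\pi}\,\frac{dx\wedge dy}{y^2}$ is a nonzero constant times the hyperbolic area form and integrates to $-\frac{c_0}{4\pi}\,\mathrm{vol}_{\mathrm{hyp}}(X)\ne0$, a contradiction.

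For~(b) I would reverse this construction: any real-analytic weight~$2$, $\G$-modular form of the shape $\wh\E(\t)=\E(\t)-\frac{c_0}{4\pi y}$ with $\E$ holomorphic and $c_0\ne0$ automatically yields an $\E\in\qM_2(\G)$ of depth exactly~$1$ with $\de(\E)=c_0$. Such an $\wh\E$ is furnished by the non-holomorphic weight~$2$ Eisenstein-Maass series at any cusp $\kappa$ of~$\G$ (constructed in analogy with $\wh E_2$ on~$\G_1$ via a scaling matrix at~$\kappa$ and evaluated at $s=0$). The one ingredient I would import rather than prove from scratch is the existence and nontriviality of this analytic continuation for an arbitrary non-cocompact~$\G$; this is the step I regard as the main obstacle, but it is classical Maass-Selberg theory.

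For~(c) I would induct on the depth~$p$ of a form $F\in\qM_K^{(\le p)}(\G)$, the base case $p=0$ being the tautology $\qM_K^{(\le0)}(\G)=M_K(\G)$. Setting $\a=\de(\E)\in\C^\times$, the fact that $\de$ is a derivation killing $M_*(\G)$ gives $\de(\E^j)=j\a\,\E^{j-1}$ and hence $\de^p(\E^p)=p!\,\a^p$. By hypothesis $h:=\de^p F$ is a modular form of weight $K-2p$, so $F':=F-\frac1{p!\,\a^p}\,h\,\E^p$ satisfies $\de^p F'=0$ and therefore has depth $\le p-1$; by induction $F'\in M_*(\G)[\E]$, and hence so is~$F$. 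Algebraic independence of~$\E$ over $M_*(\G)$ follows from the same formula: in a putative relation $\sum_{j=0}^p h_j\E^j=0$ with $h_j\in M_*(\G)$ and $h_p\ne0$, applying $\de^p$ leaves only the term $p!\,\a^p\,h_p$, forcing a contradiction.
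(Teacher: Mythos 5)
Your proof is correct and follows essentially the same route as the paper: the completion $\wh F=F-\tfrac{c_0}{4\pi y}$ and Stokes' theorem for~(a), the non-holomorphic weight~$2$ Eisenstein series at a cusp (the paper phrases this via Hecke's trick $\lim_{\epsilon\to0}\sum(c\t+d)^{-2}|c\t+d|^{-\epsilon}$ rather than analytic continuation in~$s$, but it is the same object) for~(b), and induction on depth by subtracting a multiple of $\de^p(F)\,\E^p$ for~(c). Your explicit verification of the algebraic independence of $\E$ over $M_*(\G)$ in~(c) is a worthwhile addition that the paper leaves implicit.
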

\bprf Since the result is certainly known to experts, we only sketch the proof here. 
For~(a), we observe that if $\E$ is a quasimodular form of weight~2 on~$\G$ that is not modular, then $\de\E=C$
for some~$C\ne0$, in which case the same argument as was given for~$\G=\G_1$ in Section~\ref{ERCB} shows
that the ``completion" defined by~$\wh\E(\t)=\E(\t)-\frac C{4\pi y}$ (where again~$y=\Im(\t)$) transforms under~$\G$ 
like a modular form of weight~2.  Then the non-holomorphic 1-form $\omega=\wh\E(\t)\,d\t$ is \hbox{$\G$-invariant} 
and its derivative $d\omega$ is a non-zero multiple of the volume form $y^{-2}d\t\,d\bar\t$ on~$\H$, so the integral 
of~$d\omega$ over~$\H/\G$ is non-zero, which contradicts Stokes's theorem if~$\G$ is cocompact since then~$\H/\G$ 
is closed. For~(b), we observe that if~$\G$ is non-cocompact, then it has at least one cusp, which we can assume 
after conjugating by an element of~$\SL$ to be at~$\infty$. Then the non-holomorphic weight~2 Eisenstein series 
defined by ``Hecke's trick" as $\lim_{\epsilon\to0}\sum(c\t+d)^{-2}|c\t+d|^{-\epsilon}$, where $\sm \cdot \cdot cd$ 
runs over the left cosets of the stabilizer of~$\infty$ in~$\G$, has the form $\E(\t)-\frac C{4\pi y}$ for some 
holomorphic function~$\E$ and constant~$C\ne0$, in which case $\E$~belongs to~$\qM_2(\G)$ but not to~$M_2(\G)$. 
Finally, part~(c) of the proposition follows by an easy induction, since if $F$ is a quasimodular 
form of weight~$k$ and depth~$p>0$, so that the function $G=\de^p(F)$ is a non-zero modular form of weight~$k-2p$,
then the difference between $F$ and some multiple of $G\E^p$ is easily checked to have weight~$k$ and depth~$<p$, 
so that by induction on~$p$ we see that~$F$ is a polynomial in~$\E$ with modular coefficients. 
\eprf

We call a choice of $\E$ in case~(b) a {\it splitting}, since it splits $\qM_2(\G)$ as $M_2(\G)\oplus\C\E$. 
We can, and from now on will, normalize~$\E$ multiplicatively by requiring that~$\de(\E)=1$,
but we then still have the freedom of replacing~$\E$ by $\E+h$ for an arbitrary element $h\in M_2(\G)$.  
For $\G=\MG$ we have $M_2(\G)=\{0\}$, so in that case $\E=\frac1{12}\,E_2$ is unique. In all non-cocompact
cases, if we identify $\qM_*(\G)$ with $M_*(\G)[\E]$, then $\de$ corresponds simply to~$\p/\p\E\,$.

Proposition~\ref{QMF.Mult} gives a complete ``multiplicative" description of the ring of quasimodular forms for 
all non-cocompact groups~$\G$ as polynomials in a single function~$\E\in\qM_2(\G)$ with modular forms as coefficients.  
However, this does not work for cocompact groups, since there is no function~$\E$. 
However, there is also an ``additive" description of~$\qM_*(\G)$ which works for both cocompact and 
non-cocompact groups~$\G$, but which is a little different in the two cases.  

Let~$\G\subset SL_2(\R)$ be an arbitrary lattice and define $\dM$ as the closure of $M_*(\G)$ 
with respect to~$D$, i.e., as the smallest vector space containing $M_*(\G)$ and closed under differentiation.  
The space $\dM$ has the additive structure $\C\oplus\dMp$, where $\dMp$ is the subspace
$$ \dMp\= \C[D]\bigl(M_{>0}(\G)\bigr) \= \C[D]\otimes_{\C}M_{>0}(\G) \= \bigoplus_{n\ge0,\,k>0} D^n(M_k(\G))\,, $$
since $D^n:M_k(\G)\to\dMp$ is injective for all $n\ge0$, $k>0$. Clearly 
\hbox{$M_*(\G)\subset\dM\subseteq\qM_*(\G)$.} The additive description of the space of quasimodular
forms on~$\G$, generalizing equation~\eqref{isom2} for the case of the full modular group, is then as follows.

\begin{proposition}\label{QMF.Add} {\rm (a)} If\, $\G$ is cocompact, then $\qM_*(\G)=\dM$. 
\newline\noindent 
{\rm (b)} If\, $\G$ is non-cocompact, then $\qM_*(\G)=\dM\oplus\C[D]\E$, where $\E$ as in 
Proposition~\rm{\ref{QMF.Mult}\,(b)} is any element in $\wh M_2(\G)\ssm M_2(\G)$. Thus
$$\qM_k(\G)\=\begin{cases} \hphantom{XXXx}  M^D_k(\G) &\quad\text{if $k=0$ or $k$ is odd}, \\
 M^D_k(\G)\,\oplus\,\C\cdot D^n(\E)&\quad \text{if $k=2n+2, \; n\geq0$\,.}\end{cases}$$
\end{proposition}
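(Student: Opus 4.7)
I would prove both parts simultaneously by induction on the depth $p$ of a quasimodular form $F \in \qM_k^{(\le p)}(\G)$. The case $p = 0$ is trivial, since then $F \in M_k(\G) \subset \dM$, so assume $p \ge 1$ and set $G := \de^p(F) \in M_{k-2p}(\G)$. The strategy is to produce a ``model'' quasimodular form $F_0$ of weight~$k$ and depth~$\le p$ with $\de^p(F_0) = G$; then $F - F_0$ has depth strictly less than~$p$ and the induction hypothesis finishes the job. The natural candidate is $F_0 := D^p(h)$ for some $h \in M_{k-2p}(\G)$, because iterating the corollary to Lemma~\ref{GSM.L1} yields $\de^p(D^p h) = p!\,(k-2p)_p\,h$, so $h := G/(p!\,(k-2p)_p)$ does the job as soon as $k > 2p$.

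For part~(a) this reduction already handles every case. Indeed, if $\G$ is cocompact and $k = 2p$ with $G \ne 0$, then $\de^{p-1}(F)$ would be an element of $\qM_2(\G) \setminus M_2(\G)$, contradicting Proposition~\ref{QMF.Mult}(a); hence $k > 2p$ whenever $p \ge 1$ in the cocompact case, and $F \in \dM$ by induction.

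For part~(b) the same argument handles $k > 2p$, leaving only the borderline case $k = 2p$, where $(k-2p)_p$ vanishes and a different model is required. Here I would take $F_0 := \alpha\,D^{p-1}(\E)$ and compute $\de^p(D^{p-1}\E)$ by a short induction based on $\de(\E) = 1$, $W(\E) = 2\E$, and the identity $\de D^n = D^n \de + n\,D^{n-1}(W + n - 1)$ of Lemma~\ref{GSM.L1}: one first obtains $\de(D^{n-1}\E) = n(n-1)\,D^{n-2}(\E)$ for $n \ge 2$, and iterating down to $\de(\E) = 1$ gives $\de^p(D^{p-1}\E) = p!\,(p-1)! \ne 0$. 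Choosing $\alpha = G/(p!\,(p-1)!)$ then brings the depth down by one and completes the induction.

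It remains to verify that the sum $\dM \oplus \C[D]\E$ is direct. In weight $k = 2n + 2$, suppose $h + c\,D^n(\E) = 0$ with $h \in \dM_k$ and $c \in \C$: every summand $D^j(g)$ of $h$ has $g \in M_{k-2j}(\G)$ with $k - 2j > 0$, hence $j \le n$, and the corollary to Lemma~\ref{GSM.L1} gives $\de^{n+1}(D^j g) = 0$ because the scalar factor $(j - n)_{n+1}$ vanishes for $0 \le j \le n$. Therefore $\de^{n+1}(h) = 0$, while $\de^{n+1}(D^n\E) = (n+1)!\,n! \ne 0$, forcing $c = 0$ and then $h = 0$. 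The main obstacle in this plan is precisely the borderline case $k = 2p$ of part~(b), since it is the only place where one must invoke the splitting $\E$ from Proposition~\ref{QMF.Mult}(b) and carry out the auxiliary computation above; the impossibility of such an $\E$ in the cocompact case is exactly what makes this borderline weight absent there and so dictates the dichotomy between the two cases.
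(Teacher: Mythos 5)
Your proof is correct and follows essentially the same route as the paper: induction on the depth, subtracting a multiple of $D^p$ of the modular form $\de^p(F)$ (up to normalization this is the paper's top coefficient $F_p$) when $k>2p$, handling the borderline case $k=2p$ via $D^{p-1}(\E)$ in the non-cocompact case, and ruling that case out for cocompact $\G$ by Proposition~\ref{QMF.Mult}(a). The only addition is your explicit verification via $\de^{n+1}$ that the sum $\dM\oplus\C[D]\E$ is direct, which the paper leaves implicit.
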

\begin{proof}   If $F$ is quasimodular of weight~$k>0$ and of depth~$n$, then the
final coefficient $F_n$ in the development~\eqref{qmftransf} is quasimodular of depth~0 and
hence is modular, of weight~$k-2n$.  If $n<k/2$, then $D^n(F_n)$ is quasimodular of the same 
weight and depth as~$F$, so subtracting a multiple of it from~$F$ reduces the depth of~$F$ and
hence proves the result by induction. If~$\G$ is cocompact, then $n$ is always less than~$k/2$,
because $F_n=\de(F_{n-1})/n$ and the map $\de:\qM_2(\G)\to\qM_0(\G)=\C$ is identically~0.
This proves part~(a). If $\G$ is non-cocompact, then $n$ can be equal to~$k/2$, but in that case 
$F_{n-1}$ belongs to $\qM_2(\G)=M_2(\G)\oplus\C\cdot\E$, so we can reduce the depth of~$F$ 
by subtracting from it a multiple of $D^{n-1}(F_{n-1})\in M_k^D(\G)\oplus\C\cdot D^{n-1}(\E)$
and proceed as before. 
\end{proof}

Further properties of the subspace $\dM$ are summarized in the following proposition.
\bp\label{QMF.ideal}
The vector space $\dM$ is an $\lie$-submodule of $\qM_*(\G)$ and is an ideal of the algebra $\qM_*(\G)$.
In particular, $\dM$ is closed under multiplication.  \ep
\bprf It follows easily from the definition of the $\lie$-action on $\qM_*(\G)$ and from Lemma~\ref{GSM.L1}
that $D(D^nF)=D^{n+1}F$, $W(D^nF)=(k+2n)D^nF$ and $\de(D^nF)=n(k+n-1)D^{n-1}F$ for each $F\in\qM_k(\G)$ and $n\geq0$. 
Therefore the subspace $\dM$ is an $\lie$-submodule of $\qM_*(\G)$. This proves the first statement. For the second, 
we define a map $\mu :\qM_*(\G)\rightarrow\C[T]$ by $\mu (F)=0$ if $F\in\qM_k(\G)$ with $k$ odd and 
$\mu (F)=\de^{\,p}(F)\,T^p/p!$ if $k=2p$.  It follows from the Leibniz rule that the map $\mu :\qM_*(\G)\rightarrow\C$
is an algebra homomorphism, so its kernel is an ideal. We claim that this kernel is $\dM$.  The inclusion 
$\dM\subseteq\Ker(\mu )$ is obvious because $M^D_k(\G)=\bigoplus_{0\leq n<k/2}D^nM_{k-2n}(\G)$ for $k>0$ (since 
$D^nM_0(\G)=0$ for $n>0$) and hence $\p^{k/2}$ acts trivially on $M^D_k(\G)$ for $k$ even.  The reverse 
inclusion then follows from the decomposition~$\qM_k=M^D_k\oplus\C\cdot D^{p-1}(\E)$ and 
the easily verified fact that $\mu(D^{p-1}\E)\ne0$. \eprf

The fact that $\dM$ is closed under multiplication means that there must be a formula 
for any product $D^r(f)D^s(g)$ ($r,\,s\ge0$, $f,\,g\in M_*(\G)$) as a linear combination of 
derivatives of modular forms, a simple example being 
  $fg'=\frac\ell{k+\ell}\,(fg)' + \frac1{k+\ell}\,[f,g]_1$ for $f\in M_k$ and $g\in M_\ell$,  
in which the right-hand side contains only modular forms  and their derivatives but no derivatives
of~$\E$. Similarly, in the non-cocompact case, the fact that $\dM$ is an ideal of 
$\qM_*(\G)=\dM\oplus\,\bigoplus_s\C\cdot D^s(\E)$ means that there must also be an expression 
for $D^r(f)D^s(\E)$ as a linear combination of derivatives of modular forms, and the fact that 
$\qM_*(\G)$ is a ring means that there must also be an expression for any product $D^r(\E)D^s(\E)$
as a linear combination of derivatives of both modular forms and~$\E$. These formulas, which are
quite complicated, are given in~\cite{RCA} and will not be repeated here.

Now using this discussion of the structure of quasimodular forms for arbitrary
lattices, we can easily see how all of the theorems proved in this paper have to be modified
when the full modular group~$\G_1$ is replaced by some other lattice~$\G$.  In particular, if $\G$ is
non-cocompact and we have chosen a splitting, then every result discussed or proved so
far remains true {\it mutatis mutandis}.  We state this informally as the following theorem.

\begin{theorem}\label{generalGamma} If $\G\subset\SL$ is a non-cocompact lattice with a given
splitting~$\E$, then all results of Theorems~$\ref{VZtoKK}$--$\ref{MRforQMF}$ remain true with 
$\G_1$ replaced by~$\G$ and $E_2$ by~$12\,\E$.
\end{theorem}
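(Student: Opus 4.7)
The plan is to trace through the proofs in Sections~\ref{RCBandHSD}--\ref{APPLIC} and verify that each relied only on (i) the $\lie$-action on~$\qM_*(\G)$ with commutation relations~\eqref{commutators}, and (ii) the existence of an element $\E\in\qM_2(\G)$ satisfying the transformation law
\[ (\E|_2\g)(\t) \= \E(\t) \+ \frac1{2\pi i}\,\frac c{c\t+d} \qquad\bigl(\g=\sm abcd\in\G\bigr), \]
which is equivalent to $\de(\E)=1$ via the modified Taylor expansion~\eqref{qmfTaylor}. Both ingredients are furnished for any non-cocompact~$\G$ by Proposition~\ref{QMF.Mult}(b,c) as soon as a splitting~$\E$ is fixed, and $E_2/12$ is simply the canonical splitting when $\G=\G_1$.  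Thus the substitution $E_2\mapsto 12\E$ is not an ad hoc trick but the natural generalization.

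First I would redefine $\Phi_1=\Phi_{1,0}$ of Proposition~\ref{CK1} by replacing $E_2/12$ in~\eqref{defCK1} by~$\E$; the proof of its transformation law~\eqref{CK1transf} then carries over word-for-word using the displayed transformation law of~$\E$.  Together with the existing $\Phi_{f,k}^{(D)}$, $\Phi_{f,k}^{(\Dh)}$ and $\Phi_{f,k}^{(\d)}$, this yields the Kaneko-Koike operators~$\K_k^n$ on~$\G$ via~\eqref{CKrelations3}, the canonical higher Serre derivatives $\d_k^{[n]}$ and the modified derivatives $\DD n(1)$ via~\eqref{defCKVZ1} and~\eqref{DDn1}, and the $\G$-dependent modular forms $\om_m\in M_{2m}(\G)$ via~\eqref{omDef}.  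The entire Cohen-Kuznetsov diagram~\eqref{Diagram} and its consequences---Theorem~\ref{VZtoKK}, Theorem~\ref{prop.ERCB}, Theorem~\ref{uniformMLDO} and the pairing~\eqref{newpairing}---then follow by formally the same proofs.

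Next I would combine the isomorphism $\qM_*(\G)\xrightarrow\sim\MLDO_{k,k+*}(\G)$ of Theorem~\ref{QMFtoMLDO} (already established for arbitrary~$\G$) with the two decompositions $\qM_*(\G)=M_*(\G)[\E]$ and $\qM_*(\G)=\dM\oplus\C[D]\E$ from Propositions~\ref{QMF.Mult}(c) and~\ref{QMF.Add}(b).  The first yields the realization of $\MLDO_{k,k+K}^{(\le n)}(\G)\cong\bigoplus_{r=0}^n M_{K-2r}(\G)$ via the canonical higher Serre derivatives $\d_k^{[r]}$, and the second yields the realization via Rankin-Cohen brackets and Kaneko-Koike operators; these are the analogues of Theorems~\ref{Struct} and~\ref{bcd-Struct}.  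The coefficient computations in Section~\ref{coeffs} (Theorems~\ref{atob},~\ref{a-transf},~\ref{a-h} and~\ref{Monic}) use only Lemma~\ref{Transf1}, the transformation law of $\E$, and elementary binomial identities, and so transfer verbatim with the substitution $E_2\mapsto 12\E$.  Theorem~\ref{VZforQMF} and its corollary (the generalized Martin-Royer theorem) use only the Leibniz rule for $\de$ acting on~$e^{-X\E}$ together with $\de(\E)=1$, and likewise go through unchanged.

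The main obstacle is a bookkeeping one rather than a conceptual one: the splitting~$\E$ is unique only modulo~$M_2(\G)$, so different choices of~$\E$ produce genuinely different~$\om_m$, different~$\d_k^{[n]}$, and different splittings of the exact sequence~\eqref{exact}.  The theorem is to be read as asserting that, for any \emph{fixed} choice of~$\E$, all the listed statements hold; one must therefore take care to use the \emph{same} splitting throughout every formula of any given calculation.  A related minor subtlety is the uniqueness step in Proposition~\ref{CK1}, which for $\G_1$ exploited $M_2(\G_1)=0$; for general non-cocompact~$\G$ that uniqueness is lost, but any choice of~$\E$ produces an admissible $\Phi_1$ and the resulting theory is parametrized by~$\E$ in the same controlled way.
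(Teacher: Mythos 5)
Your proposal is correct and follows essentially the same route as the paper: the paper's own justification likewise consists of observing that every construction (Serre derivative, Kaneko--Koike operator, canonical higher Serre derivatives, Cohen--Kuznetsov series, the two decompositions of $\qM_*(\G)$ from Propositions~\ref{QMF.Mult} and~\ref{QMF.Add}) depends only on the $\lie$-structure and on a chosen $\E$ with $\de(\E)=1$, so that all proofs transfer verbatim once $E_2/12$ is replaced by~$\E$. The only presentational difference is that the paper defines the generalized $\d_k^{[n]}$ by the recursion~\eqref{defVZ} with $E_4/144$ replaced by the curvature $\Om=\E^2-\E'\in M_4(\G)$ (citing the canonical Rankin--Cohen algebra structure of~\cite{Z}), whereas you obtain them from the generating-function relation~\eqref{defCKVZ1}; these are equivalent, and your remarks on the non-uniqueness of the splitting and on the loss of uniqueness in Proposition~\ref{CK1} match the paper's own caveats.
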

Let us discuss briefly what this statement means in each case.  The first point is that given the 
lattice~$\G$ and the splitting~$\E$ we have a well-defined Serre derivative $\d_k=\d_{k,\E}$ mapping 
$M_k(\G)$ to $M_{k+2}(\G)$ for every~$k\ge0$, defined by the same formula~\eqref{Serrederiv} as before but 
with $E_2(\t)/12$ replaced by~$\E$, or in symbolic notation, by~$\d_\E=D-\E\,W$.  We will usually drop 
the subscript~$\E$ for convenience, but one should not forget that in the general case the new Serre derivative 
is not intrinsic to~$\G$ as it was for the full modular group, but depends on a choice of splitting.  
Replacing $\E$ by $\E^*=\E+h$ with $h\in M_2(\G)$ changes~$\d_\E$ by $h\,W$, i.e., it changes $\d_k$ to 
$\d_k^*(f)=\d_k(f)+khf$.  Similarly, we have a new Kaneko-Koike operator $\K_k^n=\K_{k,\E}^n(f)$ 
from $M_k(\G)$ to $M_{k+2n}(\G)$ given by equation~\eqref{KKdef} with $E_2/12$ replaced by~$\E$,
and also new canonical higher Serre derivatives $\d_k^{[n]}=\d_{k,\E}^{[n]}$ defined by
equation~\eqref{defVZ}, but with $E_4/144$ replaced by the ``curvature" $\Om=\E^2-\E'$, which 
always belongs to~$M_4(\G)$.  Formula~\eqref{rc-bracket-vz} still remains true with these new 
higher Serre derivatives, and shows that the entire structure of~$M_*(\G)$ as a Rankin-Cohen algebra 
is determined by just the multiplication, the new Serre derivative, and the curvature, in accordance 
with the main result of~\cite{Z} (where~$\Om$  was denoted by~$-\Phi$, and a Rankin-Cohen algebra
defined by formulas~\eqref{defVZ} and~\eqref{rc-bracket-vz} from an underlying graded algebra together 
with a derivation~$\d$ of weight~$+2$ and an element~$\Om$ of weight~4 was called a {\it canonical}  
Rankin-Cohen algebra).  With these definitions, the meaning of the generalizations of Theorem~\ref{VZtoKK}
(apart from the specific formulas for the values of $\om_m=\om_{m,\E}\in M_{2m}(\G)$, which will
of course depend on~$\G$ and~$\E$), Theorems~\ref{Struct}, \ref{bcd-Struct}, \ref{atob}, \ref{Monic},
\ref{uniformMLDO}, \ref{VZforQMF} and its corollary, and Propositions~\ref{CK1} and~\ref{prop.ERCB} 
are all clear.  In particular, we have a notion of extended Rankin-Cohen brackets for any non-cocompact
group together with a choice of splitting.  

Finally, we should say a few words about the cocompact case, even though for most of the applications
(in particular, in the theory of VOAs) we only care about the full modular group or its subgroups.
The proofs of Theorems~\ref{a-transf}, \ref{a-h} and~\ref{QMFtoMLDO} 
did not depend in any way on the special properties of~$\G_1$ and were already stated for arbitrary lattices. 
The description of all modular linear differential operators in terms of Rankin-Cohen brackets as given
in Theorem~\ref{Struct} is in principle still valid for any lattice~$\G$, cocompact or not, but with 
the proviso that in the cocompact case all MLDOs have order strictly smaller than half their weight 
and there is no analogue of the Kaneko-Koike operator, so that we only need Rankin-Cohen brackets
with modular forms of strictly positive weight. There is, however, one exception. The 
generalized Serre derivative~$\d_\E(f)=f'-k\E f$ does not involve~$\E$ if~$k=0$, and similarly
$\K_{k,\E}^n(f)$ does not contain~$\E$ if~$k=1-n$, so that the corresponding operators $D$ and $D^n$
are defined even in the cocompact case when no splitting exists.  This corresponds to ``Bol's identity"
$D^n(f|_{1-n}\thin g)=(D^nf)|_{1+n}\thin g$ for any $f\in\Hol(\H)$ and $g\in\SL$, so that $D^n\in\MLDO_{1-n,1-n}(\G)$
for any lattice~$\G$ and any~$n\ge0$.   Apart from this, however,  all MLDOs for non-cocompact
lattices are combinations of Rankin-Cohen brackets with forms of positive weights,  there are no
monic MLDOs with holomorphic quasimodular forms as coefficients, and the dimension of $\MLDO_{k,k+K}(\G)$
is independent of~$k$ and equal to~$\dim\thin\qM_K(\G)\thin$.

\section{Primitive projection and modular linear differential operators}  \label{MLDOandPrimProj}
In Section~\ref{MLDOandQMF} we defined an isomorphism $F\mapsto L_F$ between 
quasimodular forms and MLDOs, and in Section~\ref{ERCB2} the operation of $L_F$ on modular forms
was interpreted as a pairing~\eqref{newpairing} between quasimodular and modular forms.
Here we will define a second isomorphism between quasimodular forms and MLDOs in terms of a certain
projection operator.  This will be done in the opposite order from before, giving first a new pairing 
between quasimodular and modular forms in terms of the action of $\lie$ on the space of quasimodular 
forms and the primitive projection operator for $\lie$-modules, and then generalizing this operator to
non-modular arguments. A reinterpretation of the new isomorphism in terms of almost holomorphic 
modular forms and the holomorphic projection operator will be given in the next section.

\begin{theorem}\label{QMFtoMLDO2} Let $F$ be a quasimodular form of weight~$K$ and depth~$n$ on
an arbitrary lattice~$\G$.  Then for every positive integer $k$ the operator 
\be\label{pairing2}  \L_{F,k} \: 
 \sum_{r=0}^n \frac1{r!}\,\Biggl(\sum_{m=r}^n\frac{(-1)^mD^{m-r}(\de^mF)}{(m-r)!\,(k+K-m-1)_m}\,\Biggr)\,D^r\, \ee
is a modular linear differential operator of order~$n$ and type~$(k,k+K)$, and the map $F\mapsto\L_{F,k}$
gives an isomorphism from $\tM_K^{(\le n)}(\G)$ to $\MLDO^{(\le n)}_{k,k+K}(\G)$.
\end{theorem}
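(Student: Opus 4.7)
The plan is to verify that the coefficients $a_r$ of $\L_{F,k}$ (written in the form~\eqref{a-exp}) satisfy the condition~\eqref{actionofd} from the discussion preceding Theorem~\ref{QMFtoMLDO}, which is equivalent to $\L_{F,k}$ being an MLDO of type $(k,k+K)$.  Reading the coefficients off from the definition,
\bes
a_r \;=\; \frac{1}{r!}\sum_{m=r}^n \frac{(-1)^m D^{m-r}(\de^m F)}{(m-r)!\,(k+K-m-1)_m},
\ees
so the $a_r$ are manifestly quasimodular of weight $K-2r$ and depth at most $n-r$; what requires work is~\eqref{actionofd} itself.  Since~\eqref{actionofd} for general $j$ follows by iteration from its $j=1$ case, the whole modularity question reduces to proving the single ladder relation $\de a_r = -(r+1)(k+r)\,a_{r+1}$.

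To prove this ladder relation, I would apply Lemma~\ref{GSM.L1} in the form~\eqref{GSM25} to each summand of~$a_r$, producing two pieces: one involving $D^{m-r}(\de^{m+1}F)$ and one involving $D^{m-r-1}(\de^m F)$.  Reindexing $m\mapsto m-1$ in the first piece (legitimate because $\de^{n+1}F=0$) puts both contributions into the same form $D^{m-r-1}(\de^m F)$ for $r+1\le m\le n$.  Rewriting the denominator as $(k+K-m-1)_m = (k+K-m-1)\cdot(k+K-m)_{m-1}$ then makes the crucial cancellation $(k+K-m-1) - (K-m-r-1) = k+r$ visible, and this common factor~$k+r$ is exactly what is needed to identify the result with~$-(r+1)(k+r)\,a_{r+1}$.

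For the isomorphism statement, injectivity is immediate from a triangularity argument on the top coefficient $a_n = \frac{(-1)^n}{n!\,(k+K-n-1)_n}\de^n F$: if $\L_{F,k}=0$ then $\de^n F=0$, so $F$ has depth $\le n-1$, and applying the same reasoning successively to $a_{n-1},a_{n-2},\ldots$ yields $F=0$.  Surjectivity is then automatic, because Theorem~\ref{QMFtoMLDO} supplies $\dim\MLDO^{(\le n)}_{k,k+K}(\G) = \dim\qM_K^{(\le n)}(\G)$, and an injective linear map between finite-dimensional spaces of equal dimension is an isomorphism.

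The main obstacle is the combinatorial bookkeeping in the central calculation: the denominator $(k+K-m-1)_m$ interacts differently with the two types of terms produced by~\eqref{GSM25}, and one has to perform the reindexing and the factorization of this shifted factorial in exactly the right order for the required cancellation to appear.  Once that is done, everything else — verification that the coefficients are quasimodular of the correct depth, the triangularity of the top coefficient, and the dimension count — is routine.
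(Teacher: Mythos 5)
Your proof is correct, but it takes a genuinely different route from the paper's. The paper proves modularity of $\L_{F,k}$ by first constructing the primitive projection $\pip_{k+K}=\sum_m\frac{(-1)^m}{m!\,(k+K-m-1)_m}D^m\de^m$ (Proposition~\ref{PrimProj}), observing that $\L_{F,k}(f)=\pip_{k+K}(fF)$ for $f$ modular (since $\de^m(fF)=\de^m(F)f$), and then invoking the injectivity of $\MLDO_{k,k+K}(\G)\to\Hom(M_k^{\mathrm{mer}}(\G),M_{k+K}^{\mathrm{mer}}(\G))$; for the isomorphism it computes $\L_{D^n(g),k}(f)$ explicitly as the multiple $\binom{n+k+K-2}{n}^{-1}[f,g]_n$ of a Rankin--Cohen bracket (equation~\eqref{pairing5}) and appeals to Theorem~\ref{Struct} together with the additive decomposition~\eqref{isom3} of quasimodular forms. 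You instead verify the intrinsic characterization of MLDOs from Theorem~\ref{a-transf}, reduced via~\eqref{qmfTaylor} to the ladder relation $\de a_r=-(r+1)(k+r)\,a_{r+1}$ of~\eqref{actionofd}, by a direct computation with~\eqref{GSM25}; I checked the computation and the cancellation $(k+K-m-1)-(K-m-r-1)=k+r$ does produce exactly $-(r+1)(k+r)a_{r+1}$ after the reindexing and the factorization $(k+K-m-1)_m=(k+K-m-1)(k+K-m)_{m-1}$, and the denominators never vanish since $k\ge1$ and $K\ge2n$. Your route is more elementary and self-contained (it needs only Theorem~\ref{a-transf} and Lemma~\ref{GSM.L1}, not the projection operator), and your triangularity-plus-dimension-count argument for the isomorphism is shorter than the paper's, though it leans on Theorem~\ref{QMFtoMLDO} for the dimension of $\MLDO^{(\le n)}_{k,k+K}(\G)$. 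What it does not deliver is the conceptual payoff that motivates the paper's proof: the identity $\L_{F,k}(f)=\pip_{k+K}(fF)$ of~\eqref{pairing4}, which is reused in Section~\ref{MLDOandHolProj} to reinterpret the map via holomorphic projection, and the explicit Rankin--Cohen formula~\eqref{pairing5}, which identifies the images of the standard basis of $\qM_K^{(\le n)}(\G)$.
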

As examples, if $F$ is modular of weight~$K$ (so that $\de^m(F)=0$ for~$m>0$),
then $\L_{F,k}$ is just multiplication by~$F$, while if $F=E_2$ and $k>0$ then $\L_{F,k}$ is
just a multiple of the Serre derivative~$\sd_k\thin$.
Note that the complicated-looking formula~\eqref{pairing2} can be written more simply as
\be\label{pairing3}  \L_{F,k}(f) \= \sum_{m=0}^n\frac{(-1)^m\,D^m(\de^m(F)\thin f)}{m!\,(k+K-m-1)_m}\,, \ee
but in~\eqref{pairing2}  we have expanded by Leibniz's rule to give $\L_{F,k}$ explicitly as a differential operator.

The origin of this new isomorphism is that there is a canonical way to
project any non-negatively graded $\lie$-module (with grading given by~$W$) onto its 
``primitive" part (kernel of~$\de$).  Applied to the ring of quasimodular forms, this 
gives a collection of canonical projection maps~$\pip_k$ from 
$\tM_k(\G)$ to $M_k(\G)$ for all~$k\ge0$ and any lattice~$\G\subset\SL$.
Rather than just stating the formula as a proposition and checking that it works,
we indicate how it can be derived.  We make the Ansatz that $\pip_k$ 
is an element of the universal enveloping algebra of~$\lie$, i.e., that 
it can be represented as a polynomial in the three generators $D,\,W,\,\de$ of~$\lie$.
These elements of course do not commute, but in view of the commutation 
relations~\eqref{commutators} we can write any non-commutative polynomial in 
them as a linear combination of monomials $D^n\de^mW^\ell$ with integers $\ell,\,m,\,n\ge0$.
When we apply them to quasimodular forms of a fixed weight~$k$, the factor $W^\ell$ just 
acts as a known scalar, so we can write the sought-for operator $\pip_k$ as a linear
combination of monomials $D^n\de^m$ with complex coefficients depending only on~$m$, $n$ 
and~$k$. Moreover, since we want $\pip_k$ to preserve the weight, and since $D$ and $\de$ 
increase and decrease the weight by~2, respectively, we must have~$m=n$, so our Ansatz becomes
\be\label{Ansatz}    \pip_k \= \sum_{m\ge0} c_m \,D^m\thin\de^m \ee
with some as yet undetermined coefficients $c_m$ depending on~$k$.  Here the summation can 
be replaced by one over just $0\le m\le k/2$, because $\de^m(f)$ vanishes for $m>k/2$.  Since 
we want $\pip_k$ to be a projection operator onto the subspace $M_k$ of~$\tM_k$, and since 
$\de$ annihilates~$M_k$, we must have $c_0=1$. Furthermore, since we want the image of
$\pip_k$ to be contained in $M_k=\ker(\de)$ we must have $\de\pip_k(f)=0$ for every $f\in\tM_k$.  
Using~\eqref{GSM25} and noting that $\de^m(f)$ has weight~$k-2m$, we calculate 
$$ \de\bigl(\pip_k(f)\bigr) \= \sum_{m\ge0} c_m\bigl(D^m\de\+m(k-m-1)D^{m-1}\bigr)\,\de^m(f)
 \= \sum_{m\ge1} \bigl(c_{m-1}\+m(k-m-1)c_m\bigr)\,D^{m-1}\de^m(f)\,. $$
Equating this to~0 gives the recursion $c_m=-c_{m-1}/m(k-m-1)$ for all~$m\ge1$, which together 
with the initial condition $c_0=1$ determines $c_m$ uniquely as $(-1)^m/m!(k-m-1)_n$.
If $k>2$, then this number is finite for all $0\le m\le k/2$, since $m\le k/2\le k-1$ and
therefore $(k-m-1)_m\ne0$.  Conversely, this calculation shows that the operator~$\pip_k$ defined by
\be\label{pik} 
 \pip_k \: \sum_{0\le m\le k/2} \frac{(-1)^m}{m!\,(k-m-1)_m} \,D^m\de^m 
  \= 1 \m \frac{D\de}{k-2} \+ \frac{D^2\de^2}{2(k-2)(k-3)} \m \cdots  \ee
has the required properties for every~$k>2$, establishing the following result.
\begin{proposition}\label{PrimProj} 
For every lattice~$\G$ and every integer $k>2$ the
operator~\eqref{pik} gives a projection from quasimodular forms of weight~$k$ on~$\G$ 
to the subspace of modular forms of weight~$k$ on~$\G$.
\end{proposition}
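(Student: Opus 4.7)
The plan is to verify that the operator $\pip_k$ defined in~\eqref{pik} satisfies the three properties characterizing it as a projection from $\tM_k(\G)$ onto $M_k(\G)$: that its coefficients are well-defined, that it acts as the identity on $M_k(\G)$, and that its image lies in $M_k(\G)$. The Ansatz-based discussion preceding the proposition has already identified the right candidate coefficients $c_m = (-1)^m/m!\,(k-m-1)_m$, so the remaining task is verification rather than discovery.

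First I would observe that the hypothesis $k > 2$ is exactly what is needed for well-definedness. The sum in~\eqref{pik} extends over $0 \le m \le k/2$, and for any such $m$ every factor in the Pochhammer symbol $(k-m-1)_m = (k-m-1)(k-m) \cdots (k-2)$ is a positive integer (since $k \ge 3$ and $m \le k/2$), so no denominator vanishes. Next, the fact that $\pip_k$ acts as the identity on $M_k(\G)$ is immediate: for $f \in M_k(\G)$ we have $\de f = 0$, so all but the $m = 0$ term of~\eqref{pik} annihilate $f$, leaving $c_0 f = f$.

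The substantive step is to verify that $\de\bigl(\pip_k(f)\bigr) = 0$ for every $f \in \tM_k(\G)$. Here I would apply the commutator identity $\de D^m = D^m \de + m D^{m-1}(W + m - 1)$ (the case $r = 1$ of~\eqref{GSM26}) to each term of $\pip_k(f) = \sum_m c_m D^m \de^m(f)$, use the fact that $\de^m(f)$ has weight $k - 2m$ so that $(W + m - 1)$ acts on it as the scalar $k - m - 1$, and shift the index in the resulting sum $\sum_m c_m D^m \de^{m+1}(f)$. This yields
\[
\de\bigl(\pip_k(f)\bigr) \;=\; \sum_{m \ge 1} \bigl(c_{m-1} + m(k-m-1)\,c_m\bigr)\,D^{m-1}\de^m(f),
\]
and the recursion $c_m = -c_{m-1}/\bigl(m(k-m-1)\bigr)$ satisfied by the $c_m$ by construction forces every coefficient on the right to vanish.

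The main point to watch, more a subtlety than a serious obstacle, is the bookkeeping in the telescoping above: one must be careful that $W$ acts by the weight of $\de^m(f)$ rather than by the weight of $D^{m-1}\de^m(f)$, since the two differ. Once this is handled, the conclusion is formally forced by the recursion already extracted in the Ansatz derivation, and combining the three verifications shows that $\pip_k$ is the desired projection onto $M_k(\G)$.
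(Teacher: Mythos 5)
Your proposal is correct and follows essentially the same route as the paper, which derives the coefficients $c_m=(-1)^m/m!\,(k-m-1)_m$ from the Ansatz $\pip_k=\sum_m c_mD^m\de^m$ by imposing $c_0=1$ and $\de\circ\pip_k=0$ via the commutator identity~\eqref{GSM25}, with $W$ acting on $\de^m(f)$ by the scalar $k-2m$ so that $(W+m-1)$ contributes $k-m-1$, exactly as you note. The only difference is one of presentation: the paper phrases the argument as a derivation of the coefficients followed by the remark that the same computation conversely establishes the proposition, whereas you phrase it as a verification, but the computations are identical.
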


We make two remarks.  The first is that Proposition~\ref{PrimProj} 
fails if $k=2$ and we have to define $\pip_2(F)$ instead as $F-\de(F)\thin\E$ with~$\E$ as in Section~\ref{cocompact}
in the non-cocompact case, while in the cocompact case we can simply take~$\pip_2$ to be the identity.  This will not
be important to us since in the application given here the weight will be larger than~2 anyway.  The second is that
the operator~$\pip_k$ defined in~\eqref{pik} satisfies the identity $\pip_k\circ D=0$ as well as $\de\circ\pip_k=0$ by
a calculation exactly similar to the one above. (More specifically, the equation  $\pip_k\circ D=0$ together with the
Ansatz~\eqref{Ansatz} leads to the same recursion $m(k-m-1)c_m=-c_{m-1}$ as before and hence determines~$\pip_k$
uniquely up to a constant.)  This means that we can give an alternative definition of the  map $\pip_k:\qM_k\to M_k$ 
as the projection onto the first factor in the direct sum decomposition $\qM_k=M_k\oplus D(\qM_{k-2})$ for~$k>2$.

Comparing the formulas~\eqref{pik} and~\eqref{pairing3}, we find that the effect of~$L_{F,k}$ on modular forms 
of weight~$k$ is given simply by
\be\label{pairing4} 
  \L_{F,k}(f) \= \pi_{k+K}(fF) \qquad(\thin F\in\qM_K(\G),\;f\in M_k(\G)\thin)\thin,
\ee
because $\de^m(fF)=\de^m(F)f$ for~$f$ modular.
Of course the same formula holds also if $f$ belongs to~$M_k^!(\G)$ (modular forms that are holomorphic in~$\H$
but can have poles at the cusps) or even to~$M_k^{\text{mer}}(\G)$ (meromorphic modular forms of weight~$k$),
and this then proves the modularity of the operator $\L_{F,k}$ and determines it completely, since the map from 
$\MLDO_{k,k+K}(\G)$ to either $\text{Hom}(M_k^!(\G),M_{k+K}^!(\G))\,$ or
 $\text{Hom}(M_k^{\text{mer}}(\G),M_{k+K}^{\text{mer}}(\G))\,$ is injective.
To see that $F\mapsto\L_{F,k}$ is an isomorphism from $\qM_K(\G)$ to~$\MLDO_{k,k+K}(\G)$, at least for~$k>0$
as in the theorem, we calculate the effect on derivatives.  If $F=D^n(g)$ with $g\in M_\ell$ and $K=\ell+2n$,
then equations~\eqref{ERCBexplicit} and~\eqref{compatible} tell us that $L_{F,k}$ is given by
\be\label{pairing4.5}
 L_{D^n(g),k}(f) \= {\binom{n+k-1}n}^{\!\!-1}\thin\bigl[f,\thin g\bigr]_n 
\ee
for any~$f\in M_k$, and a simple calculation that is left to the reader shows that~$\L_{F,k}$ is given by the very similar formula
\be\label{pairing5}
 \L_{D^n(g),k}(f) \= {\binom{n+k+K-2}n}^{\!\!-1}\thin\bigl[f,\thin g\bigr]_n \,.
\ee
for $f\in M_k$ and $g\in M_\ell$ and any~$n>0$.  In view of Theorem~\ref{Struct}, which says that all MLDOs are
given by Rankin-Cohen brackets and the Kaneko-Koike operator, together with the fact that all (holomorphic)
quasimodular forms are linear combinations of derivatives of modular forms or extended derivatives of the 
constant function~1, this completes the proof of Theorem~\ref{QMFtoMLDO2} except in the case when~$\ell=0$ and~$g=1$,
where we have to modify~\eqref{pairing5} to write $L_{D\up n(g),k}(f)$ as a multiple of the extended Rankin-Cohen
bracket~$\langle f,1\rangle_n$ if~$k>0$. If $k$ is 0 or negative, then we have to renormalize~$\L_{F,k}$ in a 
suitable way as already discussed in the case of~$L_{F,k}$ in the remarks following Theorem~\ref{QMFtoMLDO}.
A further remark is that, in virtue of the identity $\pi_k\circ D=0$ noted above, we find that~\eqref{pairing5}
can be generalized by repeated ``integration by parts" to
$$ {\binom{n+k+K-2}n}^{\!\!-1}\,\bigl[f,\thin g\bigr]_n  
  \= (-1)^r \pi_{k+K}\bigl(D^r(f)\thin D^{n-r}(g)\bigr)\qquad\bigl(0\le r\le n\bigr)\,,$$
so that the $n\thin$th Rankin-Cohen brackets can be seen as the result of applying the 
projection operator to any product $D^r(f)D^s(g)$ with~$r+s=n$.

\section{Holomorphic projection and modular linear differential operators}  \label{MLDOandHolProj}

In this final section we recall the bijection between quasimodular forms and almost holomorphic modular forms
and use it to rewrite Theorem~\ref{QMFtoMLDO2} as a statement involving a  holomorphic projection operator 
defined for real-analytic functions in the upper half-plane.  This gives a more conceptual description of the map
from quasimodular forms to MLDOs. The final result, Theorem~\ref{MLDOsViaHolProj}, provides
perhaps the simplest description of MLDOs of all the ones given in this paper.

By definition, an {\it almost holomorphic modular form} of weight~$k$ on a lattice~$\G\subset\SL$ is 
a function $\Phi:\Hh\to\C$ that is a polynomial in~$1/\Im(\t)$ with coefficients that are holomorphic 
functions of moderate growth and that transforms like a modular form of weight~$k$ 
under the operation of~$\G$ on~$\H$. The degree of this polynomial is called
the {\it depth} of~$\Phi$. There is an isomorphism from the space~$\hM_k(\G)$ of all almost holomorphic modular
forms of weight~$k$ on~$\G$ to the space $\qM(\G)$ of quasimodular forms of weight~$k$ on~$\G$ given by 
associating to each function~$\Phi(\t)\in\hM_k(\G)$ its constant term with respect to~$1/\Im(\t)$. (In fact this 
was the original definition of quasimodular forms in~\cite{KZ}, as already mentioned in Section~\ref{MLDOandQMF}.) 
The inverse isomorphism, which is less obvious, maps $F\in\qM_k(\G)$ to its ``completion" 
\be\label{completion}
\wh F(\t) = \sum_{r\ge0}\frac{F_r(\t)}{(2\pi i\thin(\t-\bar\t))^r}\,,
\ee 
where the functions~$F_r$ are defined by~\eqref{qmftransf} or in terms of~$\de$ as~$F_r=\de^rF/r!\thin$. Then the
action of the Lie algebra~$\lie$ on $\qM_*(\G)$ described in Section~\ref{MLDOandQMF} translates into an action on 
$\hM_*(\G)$ by new operators (denoted $\wh D$, $\wh W$ and $\wh\de$ to distinguish them from the corresponding
operators on quasimodular forms) defined by 
\be \label{sl2onAHMF}
   \wh D\thin\Phi(\t)\,=\,\frac1{2\pi i}\,\biggl(\frac{\p\Phi(\t)}{\p\t} 
         \+k\thin\frac{\Phi(\t)}{\t-\bt}\biggr)\,, \;\quad
      \wh W\thin\Phi(\t)\,=\,k\thin\Phi(\t)\,, \;\quad 
     \wh\de\,\Phi(\t)\,=\,2\pi i\thin(\t-\bt)^2\,\frac{\p\Phi(\t)}{\p\bt}
\ee
for $\Phi\in\hM_k(\G)$.  For a more complete discussion of all of this material we refer the reader to Section~5.3 
of~\cite{123}, but with the warning that the notations there are somewhat different (in particular  {$\wh D$, $\wh W$
and~$\wh\de$} are denoted~$\p$, $E$ and~$\delta^*$, respectively) and that there are a few misprints.

We can now use this isomorphism to transfer the projection operator $\pip_k$ as defined by~\eqref{pik} to a 
projection operator~$\pihh_k$ from almost holomorphic modular forms to modular forms of the same weight 
simply by replacing $D$ and~$\de$ by $\wh D$ and~$\wh\de$ in~\eqref{pik}.   Then~\eqref{pairing4} translates 
into the statement that the operator $\L_{F,k}(f)$ defined in Theorem~\ref{QMFtoMLDO2} is equal to
$\pihh_{k+K}(f\wh F)$ if $f$ is a modular form of weight~$k$.  However, this is still not quite what we want because
to obtain $\L_{F,k}$ as an MLDO we need to know its operation on arbitrary differentiable functions
in the upper half-plane, not only on modular forms.  For this we use a {\it holomorphic projection operator} $\pih_k$
that maps real-analytic (or just differentiable)  functions in the upper half-plane to holomorphic functions and that projects
differentiable or real-analytic modular forms of weight~$k\ge2$ on some lattice~$\G\subset\SL$  to holomorphic modular forms
of the same weight and on the same group. There are several different versions of this operator, depending on the space of
functions to which it is applied.  The most special one, and the one that is used most frequently in the theory of modular forms,
is defined directly on the space of differentiable or real-analytic  modular forms~$\Phi$ of weight~$k$ on~$\G$ that are sufficiently
small at infinity and projects them to holomorphic cusp forms of weight~$k$ by the requirement that the Petersson scalar 
product of $\pih_k(\Phi)$ with any holomorphic cusp form $h$ of weight~$k$ on~$\G$ is the same as the Petersson scalar product 
of $\Phi$ with~$h$.  This defines $\pih_k(\Phi)$ uniquely because the Petersson scalar product on the space of cusp forms is 
non-degenerate. At the next level, there is a projection operator defined for arbitrary 1-periodic functions on~$\H$ (i.e., functions on
$\Z\backslash\H$) in terms of their Fourier expansions by the formula 
\begin{align*}
& \pih_k:\;\Phi(x+iy)\,=\,\sum_{n\in\Z}A_n(y)\,e^{2\pi inx} \;\mapsto\; f(z)\,=\,\sum_{n>0} a_n\,e^{2\pi in\t}\,, \\
& \qquad \qquad   a_n \: \frac{(4\pi n)^{k-1}}{(k-2)!}\,\int_0^\infty A_n(y)\,e^{-2\pi ny}\,y^{k-2}\,dy\,,
\end{align*}
which is checked by a straightforward calculation to agree with the previous definition when $\G$ contains the matrix $\sm1101$
and $\Phi$ is modular of weight~$k$ on~$\G$. (Apply the formula $(\pih_k(\Phi),h)=(\Phi,h)$ to the $n$th Poincar\'e series 
$h(\t)=P_{k,n}(\t)=\sum_{\g\in\Z\backslash\G} e^{2\pi in\t}|_k\g$ and use a standard unfolding argument on both sides.)
Finally, both cases can be seen as specializations of a 	yet more general operator that projects {\it arbitrary} smooth
functions in~$\H$ satisfying a suitable growth condition near the boundary to holomorphic functions and that is equivariant
with respect to the action of~$\SL$ and hence sends modular forms to modular forms and 1-periodic functions to
1-periodic functions.  This operator is defined by integrating against the {\it Bergman kernel function} and maps
the function~$\Phi$ to the function $\pih_k(\Phi)$ defined by 
\be\label{Bergman}
\pih_k(\Phi)(\t)\:\frac{k-1}{4\pi}\,\iint_\H \Phi(z)\,\Bigl(\frac{z-\bar z}{\t-\bar z}\Bigr)^k\,dV\qquad(\t\in\H)\,, 
\ee
where $dV$ denotes the invariant volume element~$y^{-2}\thin dx\thin dy$ (with $z=x+iy$ as usual)
and the integral is absolutely convergent by virtue of the growth assumption on~$\Phi$.  The fact that 
the map~$\pih_k$ is $\SL$-equivariant in weight~$k$ follows directly from the transformation property 
$$  \frac{gz\m\overline{gz}}{g\t\m\overline{gz}} \= \frac{c\t+d}{cz+d}\;\frac{z-\bar z}{\t-\bar z}
   \qquad\Bigl(z,\,\t\in\H,\;g=\bm abcd\in\SL\,\Bigr)\,,$$
while the fact that it is the identity on holomorphic functions (and therefore is a projection
operator as claimed, since $\pih_k(\Phi)$ is obviously always holomorphic in~$\t$) is proved by an 
argument involving Cauchy's theorem and integration by parts which we generalize in the following lemma.  

\begin{lemma}\label{almostHolProj}      
If $h(\tau)$ is a holomorphic function in the upper half-plane satisfying suitable growth 
conditions, then for any integer $m$ with $0\le m\le k-2$ we have 
\be\label{AlmostHolProj}
 \pih_k\biggl(\frac{h(\tau)}{(\t-\bar\t)^m}\biggr) \= \frac{(-1)^m}{(k-m-1)_m}\,\frac{d^{\thin m} h(\tau)}{d\t^{\thin m}}\,.
\ee
\end{lemma}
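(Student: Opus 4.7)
The plan is to prove the identity by induction on $m$, the key input being a clean recursion obtained by differentiating the Bergman-kernel formula~\eqref{Bergman} under the integral sign.

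For the base case $m=0$ the claim reduces to the well-known fact that $\pih_k(h)=h$ for a holomorphic function $h$ satisfying the stated growth hypothesis. I would sketch this in the standard way: writing the inner $x$-integral in~\eqref{Bergman} for fixed $y$ as a contour integral along the horizontal line $\Im u=-y$ after the substitution $u=\bar z$, shifting this contour upward past the single pole at $u=\tau$ of the factor $(\tau-u)^{-k}$ (the growth hypothesis ensuring the top and side pieces vanish in the limit), and reducing the remaining $y$-integral to $h(\tau)$ by a substitution and $k-1$ integrations by parts.

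For the inductive step, differentiating the definition of $\pih_k$ directly in $\tau$ under the integral sign, using $\partial_\tau(\tau-\bar z)^{-k}=-k(\tau-\bar z)^{-k-1}$, yields the recursion
$$\frac{d}{d\tau}\,\pih_k(\Phi)(\tau) \= -(k-1)\,\pih_{k+1}\!\left(\frac{\Phi}{z-\bar z}\right)\!(\tau),$$
valid for any $\Phi$ in the domain of $\pih_k$. Applying this with $\Phi=h/(z-\bar z)^{m-1}$ and invoking the inductive hypothesis on the right-hand side gives
$$\pih_{k+1}\!\left(\frac{h}{(z-\bar z)^m}\right)\!(\tau) \= -\frac1{k-1}\,\frac{(-1)^{m-1}}{(k-m)_{m-1}}\,h^{(m)}(\tau) \= \frac{(-1)^m}{(k-1)(k-m)_{m-1}}\,h^{(m)}(\tau),$$
and the elementary identity $(k-1)\,(k-m)_{m-1}=(k-m)_m$ turns this into $\frac{(-1)^m}{(k-m)_m}\,h^{(m)}(\tau)$, which is exactly~\eqref{AlmostHolProj} with $k$ replaced by~$k+1$.

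The only real technical point is the analytic justification of differentiation under the integral and of the contour shift in the base case. Both are controlled by the ``suitable growth conditions'' on $h$, and in all the intended applications (where $h$ is a holomorphic modular form, a derivative of such, or a product of such with a rational function of $\tau-\bar\tau$) the standard decay estimates at the cusps provide more than enough margin for absolute convergence and for the vanishing of the auxiliary contour pieces.
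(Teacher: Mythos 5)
Your argument is correct, but it reaches the formula by a genuinely different route from the paper's. The paper evaluates the Bergman integral~\eqref{Bergman} for $h(\t)/(\t-\bar\t)^m$ directly and for general~$m$ in a single computation: it rewrites the inner $x$-integral as a contour integral using the observation that $\bar z=z-2iy$ is a \emph{holomorphic} function of~$z$ on each horizontal line, picks up the residue at the unique pole $z=\t+2iy$, and then performs $(k-m-2)$-fold integration by parts in~$y$. You instead isolate the analytic content in the base case $m=0$ (the reproducing property of the kernel, which the paper establishes separately just before the lemma by exactly your contour-shifting argument) and deduce the general case from the recursion $\frac{d}{d\t}\,\pih_k(\Phi)=-(k-1)\,\pih_{k+1}\bigl(\Phi/(z-\bar z)\bigr)$, obtained by differentiating the kernel in~$\t$; I have checked that this recursion and your bookkeeping are correct, including the identity $(k-1)\,(k-m)_{m-1}=(k-m)_m$ in the paper's convention $(x)_m=x(x+1)\cdots(x+m-1)$ and the range of validity: the step carries $(k,m-1)$ to $(k+1,m)$, so the case $(k,m)$ ultimately rests on the reproducing property in weight $k-m\ge2$, which is precisely the hypothesis $0\le m\le k-2$. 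What your route buys is transparency of the coefficient, which emerges as the product of the factors $-(k'-1)$ accumulated along the recursion rather than from an explicit residue-plus-integration-by-parts count, and the recursion itself is a clean identity worth recording; what it costs is a justification of differentiation under the integral sign at every step of the induction, where the paper needs only one convergence discussion. Since both arguments defer all such analytic points to the unspecified ``suitable growth conditions,'' the two proofs are at the same level of rigor, and yours is an acceptable alternative.
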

\begin{proof}
We can compute the integral over~$\H$ in~\eqref{Bergman} for~$\Phi(\t)=h(\t)/(\t-\bar\t)^m$ as
 $$ \int_0^\infty\biggl(\int_{\R+iy}\frac{(2i)^{k-m}\,h(z)\,dz}{(\t-z+2iy)^k}\biggr)\,y^{k-m-2}\,dy
   \= 2\pi i\thin\frac{(-1)^k(2i)^{k-m}}{(k-1)!}\,
    \int_0^\infty \frac{d^{k-1}h}{dz^{k-1}}\bigl(\t+2iy\bigr)\,y^{k-m-2}\,dy \,. $$
where we have used the trick that $\bar z=z-2iy$ becomes a {\it holomorphic} 
function of~$z$ when we restrict to the line~$\Im(z)=y$, so that we can apply Cauchy's theorem
to write the inner integral as $2\pi i$ times the residue of its integrand at the unique pole~$z=\tau+2iy$. 
The lemma now follows by $(k-m-2)\thin$-fold integration by parts.
\end{proof}

We can now use Lemma~\ref{almostHolProj} to extend the domain of definition of~$\pih_k$, which was previously 
defined on the space of differentiable functions in the upper half-plane of sufficiently small growth at infinity, to the (not direct!)  sum
of this space with the space  $\AHol(\H)$ of  ``almost holomorphic functions"  in the upper half-plane. Here  ``almost 
holomorphic"  means a polynomial in $1/\Im(\t)$ with holomorphic coefficients, just as it did in the case of  almost holomorphic 
modular forms,  and we no longer need to require any growth condition, because the lemma allows us to define
the weight~$k$ holomorphic projection of any almost holomorphic function  in the upper half-plane
simply by writing it as a finite linear combination of functions $(\t-\bar\t)^{-m}h(\t)$ with $h$ holomorphic
and applying formula~\eqref{AlmostHolProj}, and also tells us that this definition agrees with the one given by the 
Bergman integral whenever the function being projected is small at infinity as well as being almost holomorphic.
This extended definition is applicable in particular to almost holomorphic modular forms, and we have:
\begin{proposition}\label{pisAgree}
The projection maps $\pi_k$ and $\pih_k$ from $\hM_k(\G)$ to $M_k(\G)$ agree.
\end{proposition}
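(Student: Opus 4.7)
The plan is to compute $\pih_k$ explicitly on the canonical expansion~\eqref{completion} of an almost holomorphic modular form in negative powers of $\t-\bt$, using Lemma~\ref{almostHolProj} to evaluate each summand, and to observe that the result matches the definition~\eqref{pik} of $\pip_k$ on the nose. Concretely, given $\Phi \in \hM_k(\G)$ I would write $\Phi = \wh F$ with $F \in \qM_k(\G)$ the unique quasimodular preimage; what has to be shown is the identity $\pih_k(\wh F) = \pip_k(F)$, since by construction the operator on $\hM_k(\G)$ being compared with $\pih_k$ is obtained from $\pip_k$ by transport via the isomorphism $\hM_k(\G) \cong \qM_k(\G)$.

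Expanding $\wh F$ as in~\eqref{completion} with $F_r = \de^r(F)/r!$, the sum terminates at $r = \lfloor k/2\rfloor$ because $\de^r(F)$ has weight $k-2r$ and hence vanishes once $2r>k$. Since the proposition concerns weights $k>2$ (the range in which $\pip_k$ is defined by~\eqref{pik}), every index~$r$ appearing in the sum satisfies $0 \le r \le k-2$, so Lemma~\ref{almostHolProj} is applicable to each summand. Applying it with $h = \de^r(F)/r!$ and $m=r$, and using $d^{\thin r}/d\t^{\thin r} = (2\pi i)^r D^r$ to rewrite the right-hand side of~\eqref{AlmostHolProj}, the factor $(2\pi i)^r$ produced by the lemma cancels the one hidden in the denominator $(2\pi i(\t-\bt))^r$, yielding
\[
\pih_k\Bigl(\frac{\de^r(F)}{r!\,(2\pi i(\t-\bt))^r}\Bigr) \= \frac{(-1)^r}{r!\,(k-r-1)_r}\,D^r\de^r(F)\,.
\]
Summing over $r$ and comparing with~\eqref{pik} then gives $\pih_k(\wh F) = \pip_k(F)$, which is the claim.

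I do not anticipate a conceptual obstacle: the argument reduces to the bookkeeping check just outlined, which rests entirely on Lemma~\ref{almostHolProj} and the explicit form of the completion~\eqref{completion}. The one point that deserves emphasis is that $\pih_k$ as originally defined by the Bergman integral~\eqref{Bergman} requires decay at the cusps that almost holomorphic modular forms do not in general enjoy, so the proposition must be read with $\pih_k$ denoting the linear extension of the Bergman operator to $\AHol(\H)$ provided by Lemma~\ref{almostHolProj}. With this convention the identity is proved by the computation above, and, as a pleasant by-product, the proof explains conceptually why the coefficients $(-1)^m/m!(k-m-1)_m$ arose as the unique solution of the recursion that produced~\eqref{pik}: they are forced by the evaluation of the Bergman projection on the building blocks $h(\t)/(\t-\bt)^m$.
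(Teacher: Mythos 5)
Your proof is correct and follows essentially the same route as the paper's: both apply Lemma~\ref{almostHolProj} termwise to the expansion~\eqref{completion} with $F_r=\de^r F/r!$, use $d^m/d\t^m=(2\pi i)^mD^m$ to cancel the powers of $2\pi i$, and match the result against~\eqref{pik}. Your additional remarks -- that $r\le k-2$ holds in the relevant range and that $\pih_k$ must be understood as the extension to almost holomorphic functions furnished by the lemma -- are worthwhile points that the paper's one-line proof leaves implicit.
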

\begin{proof} This follows directly by comparing formulas~\eqref{pik} and~\eqref{AlmostHolProj}, using~\eqref{completion}
and remembering that $D^mh$ is equal to~$(2\pi i)^{-m}\thin d^mh/d\t^m\thin$.
\end{proof}

Proposition~\ref{pisAgree}, combined with equation~\eqref{pairing4} and the bijection $F\mapsto\wh F$ between
quasimodular forms and almost holomorphic modular forms, tells us that $\L_{F,k}(f)\= \pih_{k+K}(f\wh F)$ for
$f\in M_k(\G)$ and $F\in\qM_K(\G)$. (Note that the restriction $m\le k-2$ in Lemma~\ref{almostHolProj} is
not a problem here, since the weight here is $k+K$ and $m$ is bounded by the depth of~$F$ and hence by 
$K/2$, so that the condition is always satisfied when~$k$ is positive, and in fact also for~$k=0$ except
in the case $K=2$ and $\de(F)\ne0$, where $\L_{F,k}$ has to be renormalized by a factor~$k$ and then is
an MLDO even in this case by the remarks following Theorem~\ref{QMFtoMLDO2}.)
But now the discussion above immediately allows us to extend this
statement to arbitrary holomorphic functions $f\thin$: the right-hand side makes sense because~$f\wh F$ is an 
almost holomorphic function, and the maps coincide because both are differentiable operators and agree for 
all modular forms of weight~$k$. Putting everything together, we get the following theorem.

\begin{theorem}\label{MLDOsViaHolProj}  For each positive integer~$k$ there is an isomorphism 
\be\label{isomQMtoMLDO2} 
\tM_*(\G)\; \xrightarrow{\sim}\; \MLDO_{k,k+*}(\G)
\ee
given by associating to $F\in \qM_K(\G)$ the modular linear differential operator~$\L_{F,k}$ defined by
\be\label{QMFandpihol}
\L_{F,k}(f)\= \pih_{k+K}(f\wh F)\;,
\ee
where $\pih_{k+K}$ is the extended holomorphic projection operator defined above.
\end{theorem}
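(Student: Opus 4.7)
My plan is to reduce this theorem to a direct verification that the recipe on the right-hand side of~\eqref{QMFandpihol} reproduces the operator $\L_{F,k}$ already defined by formulas~\eqref{pairing2}/\eqref{pairing3} in Theorem~\ref{QMFtoMLDO2}. Once the two descriptions of~$\L_{F,k}$ are shown to agree, the fact that $F\mapsto \L_{F,k}$ is an isomorphism of filtered graded modules onto $\MLDO_{k,k+*}(\G)$ is inherited from Theorem~\ref{QMFtoMLDO2}, so no separate injectivity or surjectivity argument is required.

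First I would unpack the completion. By~\eqref{completion}, $\wh F(\t)=\sum_{r=0}^n F_r(\t)\,(2\pi i(\t-\bt))^{-r}$ with $F_r=\de^r F/r!$, where $n$ is the depth of~$F$. For an arbitrary holomorphic function $f$ on~$\H$, the product $f\wh F$ is then a finite sum of almost holomorphic functions of the form $h_r(\t)\,(\t-\bt)^{-r}$ with $h_r=(2\pi i)^{-r}f\,\de^r(F)$ holomorphic. Since the extended holomorphic projection $\pih_{k+K}$ was defined on the space $\AHol(\H)$ precisely by linear extension of~\eqref{AlmostHolProj}, it distributes over this finite sum.

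Next I would apply Lemma~\ref{almostHolProj} term by term with $k$ replaced by $k+K$ and $m=r$ (the hypothesis $r\le k+K-2$ holds because $r\le n\le K/2$ and $k\ge1$, with the borderline case $k=0,\,K=2,\,\de F\ne0$ handled by the renormalization already mentioned after Theorem~\ref{QMFtoMLDO2}). Substituting $d^r/d\t^r=(2\pi i)^r D^r$ and collecting the factors of~$2\pi i$, this yields
\be\label{PlanHolProjUnwound}
  \pih_{k+K}(f\wh F) \;=\; \sum_{r=0}^{n}\frac{(-1)^r}{r!\,(k+K-r-1)_r}\,D^r\bigl(f\,\de^r(F)\bigr)\,,
\ee
which is exactly~\eqref{pairing3} (after renaming $r=m$). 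Thus $\pih_{k+K}(f\wh F)=\L_{F,k}(f)$ for every holomorphic $f$, and the theorem follows.

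The only place where I expect to have to be careful is the passage from \emph{modular} $f$ to \emph{arbitrary} holomorphic~$f$. For modular~$f$ one can also see the formula conceptually, because $\de(f)=0$ forces $\widehat{fF}=f\wh F$ and then~\eqref{pairing4} combined with Proposition~\ref{pisAgree} gives $\L_{F,k}(f)=\pi_{k+K}(fF)=\pih_{k+K}(f\wh F)$; however, that route relies on the projector $\pih_{k+K}$ being applied to an almost holomorphic \emph{modular} form. The genuine content in the general case is therefore the use of the \emph{extended} $\pih_{k+K}$ furnished by Lemma~\ref{almostHolProj}, together with the observation that both sides of~\eqref{QMFandpihol} are linear differential operators in~$f$ with holomorphic coefficients, so that the identity~\eqref{PlanHolProjUnwound}---valid termwise on $\AHol(\H)$---upgrades the modular-form identity to one on all holomorphic~$f$ without any convergence issue at the cusps.
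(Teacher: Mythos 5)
Your proposal is correct, and it rests on the same two ingredients as the paper's own argument: Lemma~\ref{almostHolProj} applied termwise to the expansion~\eqref{completion} of~$\wh F$, and the identification of the result with formula~\eqref{pairing3}, so that the isomorphism statement is inherited from Theorem~\ref{QMFtoMLDO2}. The logical organization, however, is genuinely different in one respect, and to your advantage. The paper first proves $\L_{F,k}(f)=\pih_{k+K}(f\wh F)$ only for \emph{modular}~$f$, by combining~\eqref{pairing4} with Proposition~\ref{pisAgree} (whose proof is exactly your termwise computation, but without the factor~$f$), and then passes to arbitrary holomorphic~$f$ by arguing that two differential operators with holomorphic coefficients agreeing on all modular forms must coincide --- a step that implicitly invokes the injectivity of $\MLDO_{k,k+K}(\G)\to\Hom\bigl(M_k^{\mathrm{mer}}(\G),M_{k+K}^{\mathrm{mer}}(\G)\bigr)$. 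You instead carry out the termwise computation with the factor~$f$ already in place for an \emph{arbitrary} holomorphic~$f$, landing directly on~\eqref{pairing3} as an identity of differential operators; this makes the extension step, which is the least formal part of the paper's argument, unnecessary. Your verification that $r\le n\le K/2\le k+K-2$ for $k\ge1$, so that the denominators $(k+K-r-1)_r$ never vanish, matches the paper's remark. One cosmetic slip: since $F_r=\de^rF/r!$, the holomorphic coefficient of $(\t-\bar\t)^{-r}$ in $f\wh F$ is $(2\pi i)^{-r}f\,\de^r(F)/r!$ rather than $(2\pi i)^{-r}f\,\de^r(F)$; the missing $1/r!$ reappears correctly in your displayed formula, so the conclusion is unaffected.
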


\section*{Acknowledgments}
The first author would like to thank the International Center of Theoretical Physics, Italy, and Max Planck 
Institute for Mathematics, Germany for their support during part of the time that this paper was being written.
The second author was supported in part by JSPS KAKENHI, Grant Numbers JP21K03183, JP18K03215 and JP16H06336.

\bigskip




\begin{thebibliography}{abcd}

\bibitem{AKNS} Y.~Arike, M.~Kaneko, K.~Nagatomo and Y.~Sakai,
Affine vertex operator algebras and modular linear differential equations. 
Lett. Math. Phys. \textbf{106}, no.~5 (2016), 693--718


\bibitem{AN} Y.~Arike and K.~Nagatomo, Vertex operator algebras with central charges 164/5
and~236/7.  Communications in Number Theory and Physices~{\bf 14} (2020), 487--509.

\bibitem{ANSZ} Y. Arike,  K.~Nagatomo and Y.~Sakai, with an appendix by Don Zagier,
Vertex operator algebras, minimal models, and modular linear differential equations of order~4.
J.~Math.~Soc.~Japan~{\bf 70} (2018), 1347--1373. 

\bibitem{CDO} G.~Chinta, N.~Diamantis and C.~O'Sullivan, Second order modular forms.  
{\it Acta Arith.}~{\bf 103} (2002), 209--223.

\bibitem{CL} Y.~Choie and M. H. Lee, Symmetric tensor representations, quasimodular forms,
   and weak Jacobi forms, Advances in Math.~{\bf 287} (2016), 567--599.  e-print arXiv:1007.4590 (2010).

\bibitem{HC} H.~Cohen, Sums involving the values at negative integers of $L$-functions of quadratic
characters, Math. Ann.~{\bf 217} (1975), 271--285.

\bibitem{CS} H.~Cohen and F.~Str\"omberg, {\it Modular Forms: A Classical Approach}. Graduate Studies
in Math.~{\bf 179}, AMS (2017), 700 pages.

\bibitem{CMZ} P.~Cohen, Y. Manin and D.~Zagier, Automorphic pseudodifferential operators.
In {\it Algebraic Aspects of Integrable Systems: In Memory of Irene Dorfman}, Progress in
Nonlinear Differential Equations and their Applications~{\bf 26}, Birkh\"auser, Boston (1997), 17--47.

\bibitem{Dai} X.~Dai, Chiral de Rham complex on the upper half plane and modular forms.
To appear in International Mathematics Research Notes, 42 pages.

\bibitem{Gu}  P.~Guerzhoy, A mixed mock modular solution  of the Kaneko-Zagier equation.
{\it Ramanujan J.}~{\bf 36} (2015), 149--164.

\bibitem{EZ} M.~Eichler and D.~Zagier,
\textit{The Theory of Jacobi Forms}. Progress in Mathematics~{\bf 55}, Birkh\"auser, Boston (1985).

\bibitem{HK}  Y.~Honda and M.~Kaneko, On Fourier coefficients of some meromorphic modular forms.
{\it Bull.~Korean Math.~Soc.}~{\bf 49} (2012), 1349--1356.


\bibitem{KK} M.~Kaneko and M.~Koike, On extremal quasimodular forms. Kyushu J.~Math.~{\bf 60}
(2006), 457--470.

 \bibitem{KNS}  M.~Kaneko, K.~Nagatomo and Y.~Sakai, Modular forms and second order ordinary differential equations:
applications to vertex operator algebras. Lett. Math. Phys. 103, no. 4 (2013), 439--453. 

\bibitem{KZ} M.~Kaneko and D.~Zagier, A generalized Jacobi theta function and quasimodular forms,
In  {\it The Moduli Spaces of Curves} (R.~Dijkgraaf, C.~Faber, G.~van der~Geer, eds.), Prog.~in 
Math.~{\bf 129}, Birkh\"auser, Boston (1995), 165--172.

\bibitem{KZDE} M.~Kaneko and D.~Zagier, 
Supersingular $j$-invariants, hypergeometric series, and Atkin's
orthogonal polynomials, {\it AMS/IP Studies in Advanced Mathematics,} {\bf 7} (1998), 97--126.

\bibitem{KlZ} P.~Kleban and D.~Zagier, Crossing probabilities and modular forms.
J.~Stat.~Phys.~{\bf 113} (2003),  431--454.

\bibitem{KSh} M.~Kuga and G.~Shimura, On vector differential forms attached to automorphic forms, 
Journal of the Mathematical Society of Japan, \textbf{12}, no. 3 (1960), 258--270. 

\bibitem{Kuz} N.V. Kuznetsov, A new class of identities for the Fourier coefficients of modular
forms (in Russian), Acta Arithm.~{\bf 27} (1975), 505--519.

\bibitem{MR} F.~Martin and E.~Royer, Rankin-Cohen brackets on quasimodular forms.
 J.~Ramanujan Math.~Soc.~{\bf 24} (2009), 213--233.

\bibitem{Mason} G.~Mason, Vector-valued modular forms and linear differential operators. 
Int.~J.~Number Theory~\textbf{3} (2007), 377--390.

\bibitem{MNS} G.~Mason, K.~Nagatomo and Y.~Sakai, Vertex operator algebras with central charge 8 and~16.
Contemporary Mathematics~{\bf 695} (2017), 157--186.

\bibitem{MMS1} S.~Mathur, S.~Mukhi and A.~Sen, On the classification of rational conformal field theories,
Phys.~Lett.~B 213 (1988), 303--308.

\bibitem{MMS2} S.~Mathur, S.~Mukhi and A.~Sen, Reconstruction of conformal field theories from 
modular geometry on the torus, Nucl.~Phys.~B {\bf 318} (1989), 483--540.

\bibitem{NS1} K.~Nagatomo and Y.~Sakai, Characterization of minimal models with 5~and 6~simple 
modules and modular differential equations. In preparation. 

\bibitem{RCA} K.~Nagatomo and D.~Zagier, $\ \lie$-Algebras:~I. Rankin-Cohen algebras and
associative $\lie$-algebras.  In preparation.

\bibitem{VZ}F.~Rodriguez Villegas and D.~Zagier, Square roots of central values of Hecke L-series.
 In {\it Advances in Number Theory} (Proceedings of the Third Conference of the Canadian Number 
 Theory Association), eds.~F.Q.~Gouvea and N.~Yui, Oxford University Press (1993), 81--99.

\bibitem{RCM} A.~Ramesh Chandra and S.~Mukhi, Towards a classification of two-character 
rational conformal field theories, J.~High Energy Physics~{\bf 04} (2019) 153, 61~pp.  

\bibitem{Ser} J-P.~Serre, {\it A Course in Arithmetic}.  Graduate Texts in Mathematics~{\bf 7}, Springer (1973), viii+115~pp.

\bibitem{Z} D.~Zagier, Modular forms and differential operators.
Proc. Indian Acad. Sci. (Math. Sci.), \textbf{104} (1994), 57--75.

\bibitem {123} D.~Zagier, Elliptic modular forms and their applications. In
J.~Bruinier, G.~Harder, G.~van der Geer and \hbox{D.~Zagier}, {\it The 1--2--3 of Modular Forms\,: 
Lectures at a Summer School in Nordfjordeid, Norway} (ed.~K.~Ranestad).
Universitext, Springer-Verlag, Berlin--Heidelberg--New York (2008), 1--103. 

\bibitem{Zhu} Y.~Zhu, Modular invariance of characters of vertex operator algebras.
J.~Amer.~Math.~Soc.~{\bf 9} (1996), 237--302.


\end{thebibliography}
\end{document}